\numberwithin{equation}{section}
\theoremstyle{plain}
\newtheorem{theorem}{Theorem}[section]
\newtheorem{lemma}[theorem]{Lemma}
\newtheorem{corollary}[theorem]{Corollary}
\newtheorem{proposition}[theorem]{Proposition}
\newtheorem{assumption}{Assumption}
 \theoremstyle{definition}
\newtheorem{?}[theorem]{Problem}
\newcommand{\diam}{{\rm{diam}}}
\newcommand{\argmax}{\text{argmax}}
\begin{document}

\title{Can we trust Bayesian uncertainty quantification from Gaussian process priors with squared exponential covariance kernel?}

\author[A. Hadji]{Amine Hadji}

\address{Leiden University \\ Mathematical Institute \\
\\ The Netherlands} 

\email{m.a.hadji@math.leidenuniv.nl}

\author[B. Sz\'{a}bo]{Botond Sz\'{a}bo}

\address{Leiden University \\ Mathematical Institute \\
\\ The Netherlands} 

\email{b.t.szabo@math.leidenuniv.nl}

 \subjclass[2019]{68Q25, 68R10, 68U05}

 \keywords{credible set, frequentist coverage, empirical Bayes, adaptation, asymptotics}

\begin{abstract}
We investigate the frequentist coverage properties of credible sets resulting in from Gaussian process priors with squared exponential covariance kernel. First we show that by selecting the scaling hyper-parameter using the maximum marginal likelihood estimator in the (slightly modified) squared exponential covariance kernel the corresponding credible sets will provide overconfident, misleading uncertainty statements for a large, representative subclass of the functional parameters in context of the Gaussian white noise model. Then we show that by either blowing up the credible sets with a logarithmic factor or modifying the maximum marginal likelihood estimator with a logarithmic term one can get reliable uncertainty statement and adaptive size of the credible sets under some additional restriction. Finally we demonstrate on a numerical study that the derived negative and positive results extend beyond the Gaussian white noise model to the nonparametric regression and classification models for small sample sizes as well.
\end{abstract}

\maketitle

\section{Introduction}

Bayesian methods are frequently applied in various fields of applications. One, very appealing advantage of the Bayesian framework is that it readily provides built-in uncertainty quantification. In nonparametric problems uncertainty statements are visualized with credible bands, i.e. bands accumulating prescribed fraction (typically $95\%$) of the posterior mass. Gaussian processes are popular and frequently used choices for prior distributions in high-, and infinite dimensional models. Areas of possible applications include machine learning \cite{rasmussen:williams:2006}, astronomy \cite{fmswwz:astonomy:16}, genomics \cite{kalaitzis:lawrence:2011}, linguistics \cite{koriyama:kobayashi:2015}, epidemiology \cite{bhatt:2015},...etc.

Gaussian processes are characterized by their mean and covariance kernel. Typically one considers centered Gaussian priors. For covariance kernel arguably one of the most frequently used choice is the squared exponential kernel $K(\cdot,\cdot)$, i.e. for centered Gaussian process $G_t$,
\begin{align}
K(s,t)=E[G_sG_t] = b\exp\{-a(t-s)^2\},\qquad s,t\in T,\label{def: kernel}
\end{align}
for given hyper-parameters $a,b>0$, see for instance  \cite{rasmussen:williams:2006}. In our paper, we focus on the effect of the hyper-parameter $a$, which has been investigated in the context of various models including nonparametric regression, density estimation, and classification, see for instance  \cite{vzanten:vdv:07,castillo:2014}. We take the hyper-parameter $b=1$ fixed for simplicity. 

In our work we adopt a frequentist perspective, i.e. we assume that the data is generated from some unknown, but fixed probability distribution $P_{f_0}$, indexed by a true underlying functional parameter of interest $f_0$. We are interested in recovering $f_0$ using Bayesian methodology or in other words we are interested in the performance of Bayesian techniques for recovering the underlying functional parameter $f_0$.  We consider the asymptotic regime assuming that sample size or signal-to-noise ratio increases indefinitely. The frequentist properties of Bayesian methods has been extensively studied in the literature in general high-dimensional and non-parametric settings, see for instance \cite{ghosal:ghosh:vdv:00,ghosal:vdv:07,rousseau:szabo:2015:main}. In these papers it was shown that under relatively mild conditions on the prior and the likelihood function the posterior distribution contracts around the true parameter of interest at (in many instances) optimal rate in various high-dimensional and non-parametric problems. These results show that not only the point estimators (e.g. posterior mean, $\delta$-posterior mode, etc...) resulting from the posterior provide typically good recovery of the truth under the frequentist data generating process, but also the spread of the posterior is not too large and most of the posterior mass is concentrated in a ball centered around the truth with optimal diameter.

In the literature, due to its importance, the posterior contraction rates associated to  Gaussian process priors with squared exponential kernel were also separately investigated and it was shown that for appropriate choice of the hyper-parameter $a$, depending on the smoothness of the underlying functional parameter of interest, the posterior distribution achieves nearly the optimal (minimax) contraction rate, see for instance \cite{vzanten:vdv:07,vandervaart2009,castillo:2014,bhattacharya:pati}. In practice the regularity of the underlying function is typically not known, therefore one either endows the scaling hyper-parameters with an additional layer of prior distribution (resulting in the so called hierarchical prior distribution, see also \cite{vandervaart2009}) or estimate them from the data (using typically the maximum marginal likelihood estimator). In our work we focus on investigating the reliability of these procedures for uncertainty quantification. Assuming that our observations are generated via a true $f_0$,  we are interested in whether the credible sets contain this function. In our theoretical analysis we consider the Gaussian white noise model, which is closely related to various nonparametric models and can be thought of as the idealized, continuous observation version of nonparametric regression model.

Although Bayesian methods are routinely used for uncertainty quantification, it is rather unclear whether they can provide reliable uncertainty statements. In fact it is well known that it is impossible to construct confidence sets (either based on purely frequentist or Bayesian methods) which has optimal size over a wide range of regularity classes and provides reliable uncertainty quantification simultaneously. More precisely let us assume that we have a collection of functional classes $\Theta^{\beta}$ indexed by some regularity hyper-parameter $\beta\in B$ and let us denote the minimax estimation rate corresponding to this class by $r_{n,\beta}$ (where $n$ denotes the signal-to-noise ratio). Then in general it is not possible to construct an ``honest'' confidence set $\hat{C}_n$  which achieves simultaneously that
\begin{align*}
\liminf_n\inf_{\beta\in B}\inf_{f\in\Theta^{\beta}} P_{f}(f\in \hat{C}_n)\geq 1-\alpha,\\
 \liminf_n\inf_{\beta\in B}\inf_{f\in\Theta^{\beta}} P_{f}(\|f\| \leq \hat{C} r_{\beta})\geq 1-\alpha,
\end{align*}
for some given significance level $\alpha>0$, see for instance \cite{Low:97,cai:low:04,robins:2006,gine:nickl:2016}. Therefore one has to introduce additional assumptions on the functional parameter $f_0$ to obtain confidence sets with optimal size and reliable uncertainty quantification. In the literature various constraints were proposed to overcome this problem, see for instance the monograph \cite{gine:nickl:2016} for a collection of such approaches.

The coverage properties of credible sets have been investigated only recently,  see for instance in \cite{castillo:nickl:2013,szabo:vdv:vzanten:13,szabo:etal:2015,rousseau:szabo:16:main,pas:etal::2016, belitser:2014,sniekers:2015,ray2017,castillo:szabo:2018}, and references therein, for various combinations of models and priors.  In these papers it was shown that for appropriate choices of the prior distribution both the hierarchical and empirical Bayes procedures can provide in various nonparametric models reliable uncertainty statements under some additional regularity assumption on the underlying functional parameter (e.g. self-similarity assumption, polished tail condition, excessive bias assumption, etc). In our work we focus on the Gaussian process prior with (approximately) squared exponential kernel and show that the empirical Bayes procedure results in unreliable uncertainty statement for a large class of functions. The derived negative theoretical results are also demonstrated via simulation study, and extended to the nonparametric regression and classification models. This troubling finding might shatter our trust in this popular and frequently applied Bayesian technique. However, we propose a simple and intuitive fix for this problem by slightly modifying the maximum likelihood estimator used in the empirical Bayes method. This modification corrects for the haphazard, unreliable uncertainty statements in theory and we demonstrate its applicability in a numerical example for small sample sizes as well.

The paper is organized as follows. In Section \ref{sec: model} we introduce the Gaussian white noise model and the considered Bayesian approach in details. Then we present first in Section \ref{sec: uncertainty} our negative findings on the coverage properties of Bayesian $L_2$-credible sets, and then propose different modifications correcting the haphazard behaviour of the posterior by either blowing up the credible sets by a logarithmic factor or (slightly) modifying the marginal maximum likelihood estimator. We show that the proposed methods indeed correct the overconfident uncertainty statements and result in reliable uncertainty quantification for polished tail and self-similar functions, respectively.  We demonstrate our findings on a simulation study in Section  \ref{sec: simulation} and discuss the derived results and possible extensions in Section \ref{sec: discussion}. The proofs of the above results are deferred to the Appendix. Finally as a byproduct we also derive contraction rates for the empirical and hierarchical Bayes procedures for a wide range of priors on the rescaling hyper-parameter, extending the results available in the literature. The contraction rate results and their proofs are also deferred to the Appendix.

\section{Main results}
\subsection{Model description}\label{sec: model}~
We consider the Gaussian white noise model
\begin{align}\label{eq: model}
Y(t)=\int_0^{t}f_0(s)ds+\frac{1}{\sqrt{n}}W_t, \quad t\in[0,1],
\end{align}
where $f_0\in L_2[0,1]$ is the unknown function of interest and $W_t$ denotes the Brownian motion. Let $P_0,E_0$, and $V_0$ denote the corresponding probability measure, expected value, and variance, respectively. This model is closely related to the popular nonparametric regression and density estimation models \cite{nussbaum:1996,brown:1996} and can be used as a platform to investigate more complex statistical models, see for instance \cite{tsybakov:2009,gine:nickl:2016}. In the Bayesian approach we endow the unknown function of interest $f_0$ with a prior distribution representing our initial belief. In our work we investigate the popular Gaussian process prior with rescaled squared exponential kernel \eqref{def: kernel}. Let us consider the sequence representation of the Gaussian white noise model. For an orthonormal basis $\psi_i$, $i=1,2,...$ (e.g. the Fourier basis) let us denote the sequence decomposition of the functions $f_0(t), Y(t)$, and $W_t$ by $Y_i=\langle Y(t),\psi_i(t) \rangle_2$, $f_{0,i}=\langle f_0,\psi_i(t) \rangle_2$, and $Z_i=\langle W_t,\psi_i(t) \rangle_2\stackrel{iid}{\sim}N(0,1)$, $i=1,2,...,$ respectively. Then the equivalent sequence model can be given in the form
$$ Y_i=f_{0,i}+\frac{1}{\sqrt{n}}Z_i, \quad i=1,2,... .$$
Slightly abusing our notations we denote by $f_0$ both the functional parameter in the Gaussian white noise model and the sequential parameter $f_0=(f_{0,1},f_{0,2},...)$ in the sequence model. It is common to assume that the true function $f_0$ belongs to a hyper-rectangle regularity class, i.e.
$$ f_0\in \Theta^{\beta}(M)=\{f\in \ell_2 : \sup_{i\geq 1} f_{i}^2i^{2\beta+1}\leq M\}, $$
for some (typically unknown) $\beta,M>0$. We note that the minimax estimation rate for the above hyper-rectangle regularity class is $n^{-\beta/(1+2\beta)}$, i.e. there exists $C_{\beta}>0$ such that
\begin{align*}
\inf_{\hat{f}}\sup_{f\in\Theta^{\beta}(M)}E_0 \|f-\hat{f}\|_2\geq  C_\beta n^{-\beta/(1+2\beta)},
\end{align*}
where the infimum is taken over all estimators, see for instance \cite{donoho1990}.

In view of Mercel's theorem we can represent the Gaussian process prior with squared exponential kernel as
$$ G_t=\sum_{i=1}^{\infty} \lambda_i \xi_i \psi_i(t), $$
where $\lambda_i$, $\psi_i$, $i=1,2,...$ are the eigenvalues and eigenfunctions of the squared exponential covariance kernel, and $\xi_i$ are iid standard normal random variables, see for instance Chapter 4.3 of \cite{rasmussen2004gaussian}. The corresponding coefficients $\lambda_i$ can be approximated as $\lambda_i^2\approx a^{-1}e^{-i/a}$, see for instance \cite{rasmussen2004gaussian,bhattacharya:pati}. In the rest of the paper we work with the prior 
\begin{align}
f|a\sim\bigotimes_{i=1}^\infty N(0,a^{-1}e^{-i/a})\label{def: prior}
\end{align}
in the sequence model, for convenience. Note that in view of $Y|f_0\sim\bigotimes_{i=1}^\infty N(f_{0,i},n^{-1})$ and the choice of the prior $\Pi_a(\cdot)$ in \eqref{def: prior}, the posterior $\Pi_a(\cdot|Y)$ takes the form 
\begin{align}\label{eq: posterior}
f|a,Y \sim \bigotimes_{i=1}^\infty \mathcal{N}\Bigg(\frac{nY_i}{ae^{i/a}+n},\frac{1}{ae^{i/a}+n}\Bigg).
\end{align}

\subsection{Uncertainty quantification}\label{sec: uncertainty}
In our work we investigate the reliability of the built-in uncertainty quantification of the above posterior distribution. In Bayesian methods the remaining uncertainty of the procedure is visualized by the credible set. We consider $\ell_2$-credible balls, i.e. we analyze credible sets in the form
\begin{align}
\hat{C}_{n,\alpha}=\{f\in \ell_2:\, \|f-\hat{f}_a\|_2\leq r_{\alpha}(a)\}\label{def: credible_a}
\end{align}
where $\hat{f}_a$ is the posterior mean and the radius $r_{\alpha}(a)$ is chosen such that $\Pi_a(f\in \hat{C}_{n,\alpha}|Y)=1-\alpha$, for some prescribed significance level $\alpha>0$ and hyper-parameter $a>0$.

The behaviour of the posterior distribution substantially depends on the choice of the hyper-parameter $a$. For $a= n^{1/(1+2\beta)}$ the corresponding posterior achieves reliable uncertainty quantification in the sense that the credible sets are asymptotically confidence sets, see for instance \cite{vzanten:vdv:07,bhattacharya:pati}.  In practice, however, the true smoothness parameter $\beta$ is unknown, hence one has to use the data to find the optimal scaling hyper-parameter $a$. The two most commonly applied Bayesian methods for selecting the hyper-parameter are the hierarchical Bayes and the marginal likelihood empirical Bayes methods. In the hierarchical Bayes method the hyper-parameter $a$ is endowed with a prior distribution $\pi$ (also called hyper-prior distribution), resulting in a two-level, hierarchical prior distribution
\begin{align*}
\Pi(\cdot)=\int_0^{\infty} \Pi_a(\cdot)\pi(a)da.
\end{align*}
 In contrast to this in the empirical Bayes approach we take the maximum marginal likelihood estimator (MMLE), i.e.
\begin{align}
 \hat{a}_n := \arg\max\limits_{a \in [1,A_n]}\ell_n(a), \label{def: MMLE}
\end{align}
where the marginal log-likelihood function (with respect to the measure $\bigotimes_{i=1}^\infty N(0,1)$) is
 $$\ell_n(a) = -\frac{1}{2}\sum\limits_{i=1}^{\infty}\Bigg( \log\Big(1 +\frac{n}{ae^{i/a}}\Big) - \frac{n^2Y_i^2}{ae^{i/a}+n}\Bigg)$$
and the parameter $A_n=o(n)$ restricts the parameter space to a compact interval, which is advantageous both from practical and analytical perspective. Then the estimator $\hat{a}_n$ is plugged in into the posterior distribution \eqref{eq: posterior}.


For technical reasons, we introduce the following assumptions on the hyper-prior density function $\pi(\cdot)$ supported on $[1,A_n]$.

\begin{assumption}\label{assump: HB}
Let us assume that for some $c_1>0$ there exist $c_2,c_6\geq 0$ and $c_3, c_4,c_5>0$ such that
\begin{align}
c_4^{-1}a^{-c_3}\exp(-c_2a)\leq\pi(a)\leq c_4 a^{-c_5}\exp(-c_6a),
\end{align}
for all $c_1\leq a\leq A_n$.
\end{assumption}
Note that amongst others the exponential, the gamma, and the inverse gamma distributions (restricted to $[1,A_n]$) satisfy Assumption \ref{assump: HB}. 

 We are interested in the frequentist properties of $\ell_2$-credible balls resulting from the data driven credible balls. For convenience let $\Pi_n(\cdot| Y)$ denote both the hierarchical and the empirical Bayes posterior distributions in the following. Then let us denote by $r_{\alpha}$ the radius of the $\ell_2$-ball centered around the posterior mean $\hat{f}$ and accumulating $1-\alpha$ fraction of the posterior mass, i.e.
$$\Pi_n(f:\,  \|f-\hat{f}\|_2\leq r_{\alpha}|Y )=1-\alpha.$$
In our analysis we introduce some additional flexibility by considering inflated credible balls, i.e.
\begin{align}
 \hat{C}_n(L_n)=\{f:\,  \|f-\hat{f}\|_2\leq L_n r_{\alpha} \},\label{def: credible}
\end{align}
for some blown up factor $L_n\geq 1$, possibly depending on $n$.
As a first step we note that the size of the credible set for both the empirical and hierarchical Bayes procedures adapts to the minimax rate (actually the diameter of the set is even a logarithmic factor faster than the minimax rate).

\begin{corollary}\label{cor: size}
Both the hierarchical and the empirical Bayes credible sets defined in \eqref{def: credible} have rate adaptive size, i.e. for every $\beta_0>0$ and $M>0$
\begin{align*}
\sup_{\beta\geq \beta_0}\sup\limits_{f\in\Theta^{\beta}(M)} P_{\theta_0}\Big(\diam( \hat{C}_n(1) )\geq M_n n^{-\beta/(1+2\beta)}(\log n)^{-1/(1+2\beta)}\Big)\to 0,
\end{align*}
where the sequence $M_n$ goes to infinity arbitrary slowly in case of the empirical Bayes method and $M_n\gg \log n$ in case of the hierarchical Bayes method, and $\diam(S)$ denotes the $\ell_2$-diameter of the set $S\subset\ell_2(M)$.
\end{corollary}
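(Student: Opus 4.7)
The plan is to reduce the claim to a bound on the posterior variance, which in turn reduces, via the Gaussian form of the posterior, to an upper bound on the scaling hyper-parameter. Since $\diam(\hat{C}_n(1))=2r_\alpha$ and Markov's inequality applied to $\|f-\hat{f}\|_2^2$ under $\Pi_n(\cdot|Y)$ gives
\[
r_\alpha^2 \;\leq\; \alpha^{-1} V_n, \qquad V_n := E^{\Pi_n(\cdot|Y)}\bigl[\|f-\hat{f}\|_2^2\bigr],
\]
it suffices to show $V_n \lesssim M_n^2 r_n^2$ with $P_0$-probability tending to one, where $r_n := n^{-\beta/(1+2\beta)}(\log n)^{-1/(1+2\beta)}$ and $\hat{f}$ denotes the posterior mean.

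For the empirical Bayes posterior the variance is explicit from \eqref{eq: posterior}: conditional on $\hat{a}_n$, it is a product of normals, hence
\[
V_n \;=\; \sum_{i=1}^\infty \frac{1}{\hat{a}_n e^{i/\hat{a}_n}+n}.
\]
A direct calculation, splitting the sum at the critical index $i^*(a):=a\log(n/a)$, gives $\sum_{i\geq 1}(ae^{i/a}+n)^{-1}\asymp a\log(n/a)/n$ uniformly for $a$ in the range of interest. Substituting, the problem reduces to proving the upper bound $\hat{a}_n \lesssim M_n^2\cdot n^{1/(1+2\beta)}(\log n)^{-1-2/(1+2\beta)}$ with $P_0$-probability tending to one, uniformly over $\beta\geq\beta_0$ and $f_0\in\Theta^\beta(M)$. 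This upper bound is precisely what the contraction rate theorem in the Appendix establishes, by showing that $\ell_n'(a)<0$ with $P_0$-probability tending to one once $a$ exceeds that threshold, via a Fisher-information versus data-term comparison in the explicit expression of $\ell_n$.

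For the hierarchical Bayes posterior the reduction step is identical, but the variance now decomposes as
\[
V_n \;=\; \int V(a)\,\pi(a|Y)\,da \;+\; \int \|\hat{f}_a - \hat{f}_{HB}\|_2^2\,\pi(a|Y)\,da.
\]
The first term is controlled as in the EB case once $\pi(\cdot|Y)$ is shown to concentrate on the same range of $a$, which follows from Assumption~\ref{assump: HB} together with the analysis of the log marginal likelihood. The second term is the variance of the posterior mean as a function of $a$, and a Laplace-type argument for the concentration of $\pi(\cdot|Y)$ around its mode costs an extra $\log n$ factor here; this explains why the HB statement requires $M_n\gg \log n$, whereas for EB any $M_n\to\infty$ suffices.

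The main obstacle in both cases is the upper bound on the scaling hyper-parameter, namely on $\hat{a}_n$ in the EB case and on the essential support of $\pi(a|Y)$ in the HB case. This requires a careful analysis of $\ell_n(a)$ under $P_0$, using $f_0\in\Theta^\beta(M)$ in an essential way to control the random data-dependent term uniformly in the truth; it is the substantive technical step of the Appendix contraction rate theorem, from which the present statement then follows by the Markov-plus-variance reduction above.
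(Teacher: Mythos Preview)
Your empirical Bayes argument is essentially the paper's: bound $r_\alpha^2$ by the posterior variance $\sum_i(\hat a_n e^{i/\hat a_n}+n)^{-1}\asymp \hat a_n\log(n/\hat a_n)/n$, then invoke the deterministic upper bound on $\hat a_n$ from the appendix (Theorem~\ref{th: consistency a} together with Proposition~\ref{prop: bound a}). One small correction: the bound actually established there is $\overline a_n\lesssim n^{1/(1+2\beta)}(\log n)^{-1-1/(1+2\beta)}$, not the stronger exponent $-1-2/(1+2\beta)$ you quote; plugging this in gives $r_\alpha^2\lesssim n^{-2\beta/(1+2\beta)}(\log n)^{-1/(1+2\beta)}$, which is exactly what the paper's Section~\ref{sec: self similar c-e} obtains.

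For the hierarchical posterior the paper takes a genuinely different and cleaner route than your variance decomposition. Instead of splitting $V_n$ into $\int V(a)\pi(a|Y)\,da$ plus a ``variance of the conditional mean'' term, it argues via the contraction rate: from Theorem~\ref{thm: hb:contraction} one has $\Pi(\|f-f_0\|_2\le M_n\varepsilon_n\,|\,Y)\to 1$ with $\varepsilon_n=(n/\log^2 n)^{-\beta/(1+2\beta)}$, and separately $\|f_0-\hat f\|_2\lesssim\varepsilon_n$ (by the bias/variance bounds \eqref{eq: bound variance:hb}--\eqref{eq: bound bias:hb}). Triangle inequality then gives $\Pi(\|f-\hat f\|_2\le 2M_n\varepsilon_n\,|\,Y)\to 1$, hence $r_\alpha\le 2M_n\varepsilon_n$ immediately. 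This makes transparent that the extra logarithmic loss for HB comes from the $\log$-factor in the contraction rate $\varepsilon_n$, not from a Laplace expansion of $\pi(a|Y)$ as you suggest. Your decomposition can be made to work, but controlling $\int\|\hat f_a-\hat f_{HB}\|_2^2\,\pi(a|Y)\,da$ rigorously would in any case require bounding $\|\hat f_a-f_0\|_2$ uniformly over the support of $\pi(\cdot|Y)$, which is essentially the same bias--variance analysis underlying the contraction theorem; so the paper's route is both shorter and avoids the vague ``Laplace-type'' step.
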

\begin{proof}
The proof is given in Sections \ref{sec: self similar c-e} and \ref{sec: hb}.
\end{proof}
\subsubsection{Coverage of credible sets - negative results}
Next we investigate how much we can trust the above derived data-driven Bayesian uncertainty quantification from a frequentist perspective.  We would like to know whether the true function $f_0$ is included in the (blown up) credible set, i.e. if
$$ \inf\limits_{f_0\in \cup_{\beta\geq \beta_0} \Theta^{\beta}(M) }P_{0}(f_0\in\hat{C}_n(L_n))\geq 1-\alpha $$
holds for some sufficiently large choice of $L_n$? Since it is impossible to construct honest confidence sets with rate adaptive size and in view of the adaptive size of the credible sets (see Corollary \ref{cor: size}), they must have poor frequentist coverage properties at least for certain functional parameters $f_0$. Actually the radius of the credible sets are even faster than the minimax rate, which already implies impossibility of coverage. Nevertheless it is of interest to quantify the set of functions for which the Bayesian uncertainty quantification is truth-worthy. 

First we note that a representative subset of the hyper-rectangle $\Theta^\beta(M)$ is the set
\begin{align}
\Theta^{\beta}_{s}(m,M)=\{f\in \Theta^{\beta}(M):\, \min_{i\geq 1} i^{1+2\beta}f_{0,i}^2\geq m\},\label{def: selfsim}
\end{align}
for some parameters $0<m\leq M$. Let us refer to this subclass of sequential parameters as self-similar signals following the similar terminology of \cite{gine:2010, szabo:vdv:vzanten:13}. It was shown in the later paper that the minimax rate over $\Theta^{\beta}_{s}(m,M)$ is the same as over $\Theta^\beta(M)$. The next theorem shows that the empirical Bayes procedure provides unreliable uncertainty quantification over this representative sub-class of functions unless it is blown up with at least a logarithmic factor.

\begin{theorem}\label{thm: self similar c-e}
Let us take arbitrary $L_n=o(\sqrt{\log n})$. Then the empirical Bayes credible set blown up by $L_n$ has frequentist coverage tending to zero for every self-similar signal, i.e. for every $0<m\leq M$
\begin{align*}
\sup_{f_0\in \Theta^{\beta}_{s}(m,M)} P_{0}(f_0\in\hat{C}_n(L_n))\to 0.
\end{align*}
\end{theorem}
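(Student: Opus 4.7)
The overall strategy is to show, for any self-similar $f_0\in\Theta^{\beta}_s(m,M)$, that the bias of the empirical Bayes posterior mean $\hat f_{\hat a_n}$ dominates $L_n\, r_\alpha(\hat a_n)$ with $P_0$-probability tending to $1$. The quantitative mechanism is that for self-similar signals the squared bias at scale $a$ is of order $(a\log(n/a))^{-2\beta}$ while the squared credible radius is of order $(a/n)\log(n/a)$; evaluated at $\hat a_n$ these two quantities differ by a factor of order $\log n$, so any blow-up $L_n=o(\sqrt{\log n})$ is too small to close the gap.

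The first and most delicate step is to localize $\hat a_n$. Starting from the explicit form of $\ell_n(a)$, I would replace $Y_i^2$ by its mean $f_{0,i}^2+1/n$ and study the resulting deterministic profile $a\mapsto \mathbb{E}_0\ell_n(a)$. Using the two-sided self-similarity bound $m\leq i^{1+2\beta}f_{0,i}^2\leq M$, one shows that this profile has a unique maximum $a_n^\ast$ with $a_n^\ast\asymp n^{1/(1+2\beta)}(\log n)^{-\gamma}$ for some $\gamma=\gamma(\beta)>1$ matching the diameter rate in Corollary \ref{cor: size}. A sub-Gaussian (Hanson--Wright-type) concentration bound applied to the centered quadratic form $\ell_n(a)-\mathbb{E}_0\ell_n(a)$ in the independent $Z_i$'s, uniformly in $a\in[1,A_n]$, then transfers this to the MMLE: on an event of $P_0$-probability tending to $1$, $\hat a_n\in[c_1 a_n^\ast,c_2 a_n^\ast]$, uniformly in $f_0\in\Theta^{\beta}_s(m,M)$.

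On this localization event I would carry out the direct computations
\begin{equation*}
r_\alpha(\hat a_n)^2\asymp \sum_{i\geq 1}\frac{1}{\hat a_n e^{i/\hat a_n}+n}\asymp \frac{\hat a_n}{n}\log\frac{n}{\hat a_n},
\end{equation*}
and, using $f_{0,i}^2\geq m\, i^{-(1+2\beta)}$ together with the fact that $\hat a_n e^{i/\hat a_n}/(\hat a_n e^{i/\hat a_n}+n)\to 1$ for $i\gtrsim \hat a_n\log(n/\hat a_n)$,
\begin{equation*}
\|\mathbb{E}_0\hat f_{\hat a_n}-f_0\|_2^2 = \sum_{i\geq 1}f_{0,i}^2\Bigl(\frac{\hat a_n e^{i/\hat a_n}}{\hat a_n e^{i/\hat a_n}+n}\Bigr)^2 \gtrsim (\hat a_n\log(n/\hat a_n))^{-2\beta} \asymp (\log n)^2\, r_\alpha(\hat a_n)^2.
\end{equation*}
The stochastic part $\hat f_{\hat a_n}-\mathbb{E}_0\hat f_{\hat a_n}$ is centered Gaussian with variance $\lesssim r_\alpha(\hat a_n)^2$, hence $O_{P_0}(r_\alpha(\hat a_n))$ by Borell--TIS. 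The triangle inequality then yields $\|f_0-\hat f_{\hat a_n}\|_2\gtrsim (\log n)\, r_\alpha(\hat a_n)\gg L_n\, r_\alpha(\hat a_n)$ on an event of $P_0$-probability tending to $1$, whence $f_0\notin \hat C_n(L_n)$, as claimed.

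The genuinely hard part is the MMLE localization. Because $a\mapsto ae^{i/a}$ is non-convex and $\ell_n(a)$ is an infinite sum whose deterministic profile depends delicately on the entire sequence $f_0$, the stochastic fluctuation of $\ell_n(a)-\mathbb{E}_0\ell_n(a)$ has to be controlled uniformly in $a$ on a \emph{multiplicative} (rather than additive) scale around $a_n^\ast$, and uniformly over the self-similar class. One must also rule out spurious maxima of $\ell_n(a)$ near the boundary points $a=1$ and $a=A_n=o(n)$, which requires either a uniform lower bound on the curvature of the deterministic profile or a two-scale score comparison between candidate and oracle values of $a$.
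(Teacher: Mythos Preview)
Your strategy is sound, but you have made the problem harder than it is. You propose a \emph{two-sided} localization $\hat a_n\in[c_1a_n^\ast,c_2a_n^\ast]$ and correctly flag this as the delicate step. The paper avoids it altogether: for this theorem only the \emph{upper} bound $\hat a_n\le \overline a_n$ (Theorem~\ref{th: consistency a}) is used. The reason is monotonicity. On $a\le n/e$, the map $a\mapsto (a/n)\log(n/a)$ is increasing, so $\sup_{a\le\overline a_n}r_\alpha(a)^2$ and $\sup_{a\le\overline a_n}\|W(a)\|_2^2$ are both $\lesssim (\overline a_n/n)\log(n/\overline a_n)$; and $a\mapsto (a\log(n/a))^{-2\beta}$ is decreasing, so $\inf_{a\le\overline a_n}\|B(a,f_0)\|_2^2\gtrsim m\,(\overline a_n\log(n/\overline a_n))^{-2\beta}$ from the self-similarity lower bound alone. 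Thus the comparison between bias and $L_n r_\alpha+\|W\|$ is reduced to a single deterministic scale, and Proposition~\ref{prop: bound a} ($\overline a_n\lesssim n^{1/(1+2\beta)}(\log n)^{-1-1/(1+2\beta)}$, which holds for \emph{all} $f_0\in\Theta^\beta(M)$, not just self-similar ones) finishes the job. Your Hanson--Wright/uniform-concentration program for the full likelihood would work, but it is not needed here; the paper's one-sided argument via the score function is both shorter and more robust, since it requires no information about $f_0$ beyond membership in $\Theta^\beta(M)$ for the localization step.

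There is also a small quantitative slip in your last display. With $\overline a_n\asymp n^{1/(1+2\beta)}(\log n)^{-1-1/(1+2\beta)}$ one gets
\[
\frac{\|B\|_2^2}{r_\alpha^2}\asymp \frac{(\overline a_n\log(n/\overline a_n))^{-2\beta}}{(\overline a_n/n)\log(n/\overline a_n)}\asymp \log n,
\]
not $(\log n)^2$; hence $\|B\|_2/r_\alpha\asymp\sqrt{\log n}$, which is exactly the threshold $L_n=o(\sqrt{\log n})$ in the statement. Your first paragraph had this right; the later display does not.
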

\begin{proof}
See Section \ref{sec: self similar c-e}.
\end{proof}

This negative result draws a dark picture as it tells us that one can not trust Bayesian uncertainty quantification resulting from the investigated prior, even if one allows certain amount of adjustment (i.e. by blowing up the set with a sequence tending to infinity, not too fast). Since the investigated prior is very closely related to the Gaussian process with squared exponential covariance kernel this gives the intuition that one has to be very cautious working with squared exponential kernel as the corresponding Bayesian uncertainty statement are (typically) unreliable. In the next subsection we will be touching the corners by deriving some positive results on the coverage properties of the credible sets. First we show that for analytic functions the (slightly inflated) credible sets provide reliable uncertainty quantification and second we show that by blowing up the credible sets by a $\log n$ factor or by slightly adjusting the maximum marginal likelihood estimator, one gets reliable uncertainty statements for a large subclass of functions, including the self-similar functions.

\subsubsection{Coverage of credible sets - positive results}
Let us consider the set of analytic-type functions defined as
$$ f_0\in A^{\gamma}(M)=\{f\in  \ell_2(M): \sum_{i=1}^{\infty}f_{i}^2e^{2i\gamma}\leq M\}, $$
for some $\gamma>0$. Note that the investigated prior \eqref{def: prior} is more suitable for this class of functions  due to the exponential decay of the variances. We show below that, indeed, for the class $A^{\gamma}(M)$ the empirical Bayes procedure provides reliable uncertainty quantification. Note, however, that the present class of functions is substantially smaller than $\Theta^{\beta}(M)$, for any $\beta>0$.

\begin{theorem}\label{thm: analytic coverage}
The inflated empirical Bayes credible set $\hat{C}_n(L)$ has frequentist coverage tending to one over the class $f_0\in A^{\gamma}(M)$ for any $\gamma\geq 1/2$ and sufficiently large constant $L>0$, i.e. 
$$ \inf\limits_{f_0\in A^{\gamma}(M)}P_{0}(f_0\in\hat{C}_n(L))\to 1. $$
Furthermore, the size of the credible set is (nearly) optimal, i.e. for some sufficiently large constant $C>0$,
$$\inf\limits_{f_0\in A^{\gamma}(M)} P_0 \big(\diam(\hat{C}_n(1))\leq C n^{-1/2}\log n  \big)\to 1.$$
\end{theorem}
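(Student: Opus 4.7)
The plan is to prove coverage via a three-step argument: constrain the empirical Bayes estimator to a bounded interval $\hat a_n \in [1, a^{\star}]$ with probability tending to one uniformly over $A^\gamma(M)$, control the frequentist estimation error $\|f_0 - \hat f_{\hat a_n}\|_2$ on this event, and compare it to a matching lower bound on the posterior credible radius $r_\alpha(\hat a_n)$. The guiding intuition is that analytic signals already contain so little information beyond the first few frequencies that smoothing with $a$ near the lower boundary of the parameter space $[1,A_n]$ suffices, so the marginal likelihood should push $\hat a_n$ downward. Concretely I would compute $E_0\ell_n(a)$ in terms of the posterior eigenvalues $\sigma_i^2(a)=1/(ae^{i/a}+n)$ and the coefficients $f_{0,i}^2\leq M e^{-2i\gamma}$, show that the expected score $\partial_a E_0\ell_n(a)$ is uniformly negative for $a\geq a^\star$ when $\gamma\geq 1/2$, and upgrade this to an almost-sure statement via uniform chi-squared concentration of $\ell_n(a)-E_0\ell_n(a)$ on the compact set $[1,A_n]$ (a finite discretization plus continuity of $\ell_n$ in $a$ handles the uniformity).

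On the event $\hat a_n \in [1,a^\star]$ I would invoke the bias-variance decomposition
$$ \hat f_{a,i} - f_{0,i} = -\frac{a e^{i/a}}{a e^{i/a}+n}\,f_{0,i} + \frac{\sqrt n}{a e^{i/a}+n}\,Z_i, $$
so that $\|\hat f_a - f_0\|_2^2 \leq 2 B_a^2 + 2 V_a$. Splitting the bias sum at $i^{\sharp}=a\log(n/a)$, bounding the shrinkage factor by $(ae^{i/a}/n)^2$ for $i\leq i^{\sharp}$ and by $1$ otherwise, and using the exponential decay $f_{0,i}^2\leq M e^{-2i\gamma}$ with $\gamma\geq 1/2$, one obtains $B_a^2 \leq C(M,\gamma)/n$ uniformly in $a\in[1,a^\star]$. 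For the stochastic part, $E_0 V_a=\sum_i n/(ae^{i/a}+n)^2 \asymp a\log(n/a)/n \lesssim (\log n)/n$, and a chi-squared deviation bound together with a discretization of $[1,a^\star]$ gives $V_a\lesssim (\log n)/n$ with probability tending to one, uniformly in $a$. Meanwhile the posterior squared distance $\|f-\hat f_a\|_2^2$ is a weighted chi-squared with mean $\sum_i\sigma_i^2(a)\gtrsim (\log n)/n$ and variance $2\sum_i\sigma_i^4(a)$ negligible against the squared mean, so Chebyshev's inequality delivers $r_\alpha^2(a)\gtrsim (\log n)/n$ uniformly on $[1,a^\star]$.

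Combining the three steps on the intersection event (which has probability tending to one),
$$ \|f_0-\hat f_{\hat a_n}\|_2^2 \leq 2B_{\hat a_n}^2 + 2V_{\hat a_n} \leq K(\log n)/n \leq L^2\, r_\alpha^2(\hat a_n) $$
for a sufficiently large absolute constant $L$, proving the coverage claim. The size claim follows from the matching upper bound $r_\alpha^2(a) \leq \sum_i \sigma_i^2(a) + O\bigl(\sqrt{\sum_i\sigma_i^4(a)}\bigr) \lesssim (\log n)/n$, whence $\diam(\hat C_n(1)) \leq 2 r_\alpha(\hat a_n) \lesssim \sqrt{(\log n)/n} \leq C n^{-1/2}\log n$. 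The hard step is unquestionably the first one: controlling $\hat a_n$ on analytic truths is delicate because the squared exponential prior mixes a geometrically varying prior scale $a^{-1}e^{-i/a}$ against a signal that is essentially finite-dimensional, so the score-function arguments developed for polynomially decaying (Sobolev) priors do not transfer mechanically, and one has to track the interaction between the exponential prior weights and the exponential signal decay carefully to prove monotonicity of $E_0\ell_n$ and to bound the fluctuations of the likelihood uniformly in $a$.
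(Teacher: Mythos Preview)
Your proposal is correct and follows essentially the same route as the paper. The paper constrains $\hat a_n$ to a bounded interval through its general machinery: Theorem~\ref{th: consistency a} sandwiches $\hat a_n$ between deterministic bounds $\underline a_n,\overline a_n$ defined via the expected score, and Proposition~\ref{prop: bound a} then shows $\overline a_n\le C_{\gamma,b,M}$ for $f_0\in A^\gamma(M)$ by exactly the calculation you sketch (the analytic decay makes $h_n(a,f_0)$ small for bounded $a$). On that event the paper invokes the same bias--variance split at $I_a=a\log(n/a)$, the same tail bound $\sum_{i\ge I_{\underline a_n}}f_{0,i}^2\lesssim e^{-2\gamma I_{\underline a_n}}\lesssim(\underline a_n/n)^{2\underline a_n\gamma}$ (which is where $\gamma\ge 1/2$ together with $\underline a_n\ge 1$ enters), and the same weighted chi-squared lower bound for $r_\alpha^2(a)$. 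The only organizational difference is that the paper packages the score-function monotonicity and the uniform concentration into reusable lemmas (using chaining via Corollary~2.2.5 of \cite{vdv:wellner:96} rather than discretization), so its proof of this theorem is a few lines of citations plus the analytic tail estimate; your outline redoes those steps in place. One minor correction: your bias bound $B_a^2\le C/n$ should read $\lesssim (\log n)/n$ in the regime $a\le 1/\gamma$, but this does not affect the comparison with $r_\alpha^2(a)\gtrsim(\log n)/n$.
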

\begin{proof}
See Section \ref{sec: analytic coverage}.
\end{proof}

Next we investigate the behaviour of the credible sets by allowing a logarithmic inflating factor. Since the size of the inflated credible sets are still nearly minimax, the credible sets fail to cover all functional parameter $f_0$ of interest, in view of the non-existence result of adaptive confidence sets \cite{cai:low:04,robins:2006}. Therefore we restrict the investigated class of functions to the so called polished tail class, introduced in \cite{szabo:vdv:vzanten:13,rousseau:szabo:16:main}. We say that a sequential parameter $f\in\ell_2(M)$ belongs to the class of polished tail signals denoted by $\Theta_{pt}(L_0,N_0,\rho)$, for some $L_0,\rho, N_0>0$ if
$$ \sum\limits_{i=N}^{\infty}f^2_i \leq L_0\sum\limits_{i=N}^{\rho N}f^2_i, \qquad\text{for all $N\geq N_0$}. $$
The above assumption basically requires that knowing the sequential parameter $f$ up to a certain coordinate enables us to draw conclusion about the tail of the sequence. We require that the energy (sum of the squared coefficients) of the tail is dominated by the energy of a finitely large block of coefficients. This condition makes also sense intuitively as in the stochastic model the signal can be observed only up to some limit, the fluctuation in the later coordinates can equally likely be caused by the noise. Therefore to make reliable uncertainty statement we have to assume that the tail behaviour of the signal hidden by the noise is not substantial and can be extrapolated by information available at given signal-to-noise ratio. In \cite{szabo:vdv:vzanten:13} it was shown that the above assumption is mild from statistical,  topological and Bayesian point of view.

The next theorem states that when the sequential parameter $f_0$ is restricted to polished tail sequences, then both the empirical and hierarchical Bayes credible balls blown up by a $\log n$ factor (i.e $\hat{C}_n(L\log n)$) are honest frequentist confidence set, if $L$ is large enough.
\begin{theorem}\label{thm: polish tail coverage}
For any $L_0,N_0,\rho\geq 1$ there exists a constant $L$ such that
$$ \inf\limits_{f_0\in\Theta_{pt}(L_0,N_0,\rho)}P_{0}(f_0\in\hat{C}_n(L\log n))\to 1, $$
where $\hat{C_n}$ denotes either the empirical or hierarchical Bayes credible sets under Assumption \ref{assump: HB}.
\end{theorem}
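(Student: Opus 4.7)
The plan is to reduce the coverage statement to a deterministic bias/variance comparison at the data-driven scaling $\hat a_n$, and then use the polished tail condition to close this comparison up to a factor $(L\log n)^2$. I focus on the empirical Bayes case; the hierarchical case will follow by averaging over the posterior on $a$ on the event where this posterior is localized.

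Working in the sequence representation and using \eqref{eq: posterior}, write $\|f_0 - \hat f\|_2^2 = B_n^2(\hat a_n) + W_n(\hat a_n)$, where $B_n^2(a) = \sum_i (ae^{i/a}/(ae^{i/a}+n))^2 f_{0,i}^2$ is the squared bias and $W_n(a) = n^{-1}\sum_i (n/(ae^{i/a}+n))^2 Z_i^2$ is a weighted $\chi_1^2$-sum. A Laurent--Massart-type concentration inequality, applied uniformly in $a$ on a compact interval and then specialized to $\hat a_n$, gives $W_n(\hat a_n) \lesssim \log n \cdot \tilde V_n(\hat a_n)$ with $P_0$-probability tending to one, where $\tilde V_n(a) := \sum_i (ae^{i/a}+n)^{-1}$. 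The same inequality applied to the Gaussian posterior in \eqref{eq: posterior} yields $r_\alpha^2 \asymp \tilde V_n(\hat a_n)$, so the coverage reduces to the deterministic inequality $B_n^2(\hat a_n) \leq C (L\log n)^2 \tilde V_n(\hat a_n)$ holding with $P_0$-probability tending to one.

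Next, the MMLE adaptation result derived as a by-product in the Appendix localizes $\hat a_n$ in a deterministic interval $[\underline a_n, \overline a_n] \subset [1, A_n]$ depending on $f_0$, with $P_0$-probability $1-o(1)$ (the analogous statement for the posterior on $a$ under Assumption \ref{assump: HB} handles the hierarchical case). Monotonicity of $a \mapsto B_n^2(a)$ and $a \mapsto -\tilde V_n(a)$ then reduces the task to proving $B_n^2(\underline a_n) \leq C (L\log n)^2 \tilde V_n(\overline a_n)$. Introducing the effective cutoff $i^\ast(a) = a\log(n/a)$ determined by $ae^{i^\ast/a} = n$, one has $\tilde V_n(a) \asymp i^\ast(a)/n$ and
\begin{align*}
B_n^2(a) \lesssim \sum_{i\leq i^\ast(a)} (ae^{i/a}/n)^2 f_{0,i}^2 + \sum_{i > i^\ast(a)} f_{0,i}^2.
\end{align*}
For $N = i^\ast(\underline a_n) \geq N_0$ the polished tail condition gives $\sum_{i\geq N} f_{0,i}^2 \leq L_0 \sum_{i=N}^{\rho N} f_{0,i}^2$, and this block consists of $O(N)$ coordinates, each of which contributes weight $\asymp 1/n$ to $\tilde V_n(\overline a_n)$ as soon as $\overline a_n$ is a logarithmic multiple of $\underline a_n$; this absorbs the tail part of the bias into $(\log n)^2 \tilde V_n(\overline a_n)$. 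The low-frequency sum is treated in the same spirit, trading the exponential prefactor $(ae^{i/a}/n)^2$ at the top of the range $i\leq i^\ast(\underline a_n)$ for a block appearing in the polished tail bound.

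The main obstacle is this last polished tail step, in particular pinning down the $\log n$ exponent. The concentration interval $[\underline a_n, \overline a_n]$ has multiplicative length of logarithmic order, so the cutoffs $i^\ast(\underline a_n)$ and $i^\ast(\overline a_n)$ differ by a logarithmic factor, and this gap is precisely what forces the $L\log n$ inflation. Once the empirical Bayes case is settled, the hierarchical version is routine: Assumption \ref{assump: HB} combined with the posterior-on-$a$ contraction result localizes the hyper-posterior on $[\underline a_n, \overline a_n]$ with $P_0$-probability $1 - o(1)$, and since the bias/variance comparison is uniform in $a$ on this interval, it integrates against the $a$-posterior to yield the same coverage statement.
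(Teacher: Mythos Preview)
Your overall strategy matches the paper's: decompose into bias and variance, localize $\hat a_n$ to $[\underline a_n,\overline a_n]$, and use the polished tail condition to control the bias. However, there are genuine gaps that the paper's argument fills in ways you have not.

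First, two small but real errors. The decomposition $\|f_0-\hat f\|_2^2 = B_n^2(\hat a_n)+W_n(\hat a_n)$ is incorrect: there is a random cross term $-2\sum_i B_i(a)W_i(a)$ that does not vanish. The paper avoids this by using the triangle inequality $\|f_0-\hat f\|_2\leq\|B(\hat a_n,f_0)\|_2+\|W(\hat a_n)\|_2$. Also, the monotonicity of $a\mapsto B_n^2(a)$ you invoke is false: $a\mapsto ae^{i/a}$ is increasing for $i<a$ and decreasing for $i>a$, so the squared bias is not monotone. The paper instead bounds $\sup_{a\in[\underline a_n,\overline a_n]}\|B(a,f_0)\|_2^2$ directly.

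The more serious gap is in the bias bound itself. You treat $[\underline a_n,\overline a_n]$ as a black-box localization interval, but the paper's argument crucially uses the \emph{definition} of these endpoints via the functions $g_n(a,f_0)$ and $h_n(a,f_0)$ in \eqref{eq: h}--\eqref{eq: g}. The low-frequency part $\sum_{i=2a}^{I_a}(ae^{i/a}/n)^2f_{0,i}^2$ is bounded by $(a/n)\log(n/a)\,g_n(a,f_0)$, and it is the defining inequality $g_n(a,f_0)<B\log n$ for $a>\underline a_n$ that closes this estimate. Likewise, the tail sum $\sum_{i\geq I_{\underline a_n}}f_{0,i}^2$ is controlled not by comparing block sizes to $\tilde V_n$ --- which is $f_0$-independent, so your argument there cannot work as stated --- but by using polished tail to isolate a block and then \emph{reinserting} the weights to recover $g_n(\tilde a_n,f_0)$ at a nearby $\tilde a_n\in[\underline a_n,\rho\underline a_n]$, again invoking $g_n\lesssim\log n$. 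Finally, converting the resulting $\overline a_n$-bounds back to $\underline a_n$ requires Lemma~\ref{th: interval a polish tail}, which shows $\overline a_n\log(n/\overline a_n)\leq K\log^2 n\cdot\underline a_n\log(n/\underline a_n)$ under the polished tail condition; this is precisely where the $\log n$ inflation is pinned down, and it is the step you flag as the main obstacle but do not carry out. Your hierarchical sketch is correct in spirit; the paper formalizes it via Anderson's lemma and a posterior-concentration lemma for $a$.
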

\begin{proof}
See Sections \ref{sec: polish tail coverage} and \ref{sec: hb:Thm2.4}.
\end{proof}

Hence one can achieve reliable uncertainty quantification on an arguably large subset of the parameter space by blowing up the standard credible set with a slowly varying term. This, however, is not very appealing as a practitioner would righteously hesitate introducing the artificial logarithmic blow up. Therefore, we propose another method, where one does not have to introduce a logarithmic blow up factor, but instead adjust the maximum marginal likelihood estimator. Investigating the proof of Theorem \ref{thm: self similar c-e} one can see that the MMLE $\hat{a}_n$, given in \eqref{def: MMLE}, is too small, the empirical Bayes procedure is oversmoothing. One can compensate for this by undersmoothing the procedure. We propose to adjust the MMLE by a multiplicative logarithmic factor
\begin{align}
\tilde{a}_n=\log (n)\hat{a}_n.\label{def: mod_mmle}
\end{align}
Then the corresponding empirical Bayes credible set (blown up by a sufficiently large constant $L>0$) results in reliable uncertainty quantification for self-similar functions $\Theta^{\beta}_s(m,M)$.

\begin{theorem}\label{thm: mod_eb}
For any $0<m\leq M$ there exists a constant $L>0$ such that
$$ \inf\limits_{f_0\in\Theta_{s}^\beta(m,M)}P_{0}(f_0\in\hat{C}_n(L))\to 1, $$
where $\hat{C_n}(1)$ denotes the credible set resulting from the empirical Bayes posterior with hyper-parameter $\tilde{a}_n$.
\end{theorem}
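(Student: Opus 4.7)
The strategy is to \emph{undersmooth}: by replacing $\hat a_n$ with $\tilde a_n = (\log n)\hat a_n$, we enlarge the effective truncation level of the posterior by a factor $\log n$ beyond what the MMLE would select. This inflates the posterior spread (and hence the credible radius) by $\log n$ while keeping the squared bias much smaller than the squared spread, giving enough room for honest coverage. I would carry out the argument in four steps.

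First, I would localize $\hat a_n$. The analysis already used in the proof of Theorem \ref{thm: self similar c-e} shows that for every $f_0 \in \Theta_s^{\beta}(m,M)$ the MMLE satisfies $\hat a_n \asymp n^{1/(1+2\beta)}/\log n$ with $P_0$-probability tending to one, uniformly over the class: the upper bound is a standard rate computation, and the lower bound uses the self-similarity constraint in an essential way to rule out oversmoothing below the optimal effective dimension. Consequently $\tilde a_n \in [c_1,c_2]\cdot n^{1/(1+2\beta)}$ on an event of probability $1-o(1)$, and the effective dimension $N(\tilde a_n) := \tilde a_n \log(n/\tilde a_n)$ is of order $n^{1/(1+2\beta)}\log n$, one logarithmic factor above the MMLE level.

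Second, I would analyze the bias-variance decomposition at a deterministic $a \asymp n^{1/(1+2\beta)}$. Writing
\begin{align*}
\|f_0 - \hat f_a\|_2^2 = \sum_{i=1}^\infty \Big(\frac{ae^{i/a}}{ae^{i/a}+n}\Big)^2 f_{0,i}^2 + \sum_{i=1}^\infty \Big(\frac{n}{ae^{i/a}+n}\Big)^2 \frac{Z_i^2}{n} + \text{cross term},
\end{align*}
the Sobolev upper bound $f_{0,i}^2 \leq M i^{-1-2\beta}$ yields squared bias $\lesssim N(a)^{-2\beta} \asymp n^{-2\beta/(1+2\beta)}(\log n)^{-2\beta}$, while the stochastic term has mean $\asymp N(a)/n \asymp n^{-2\beta/(1+2\beta)}\log n$ and concentrates around its mean by a Gaussian-chaos (Hanson-Wright) estimate. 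The credible radius satisfies $r_\alpha(a)^2 \asymp \sum_i (ae^{i/a}+n)^{-1} \asymp N(a)/n$ of the same order. The essential asymmetry produced by the extra $\log n$ in $\tilde a_n$ is that the squared bias is smaller than $r_\alpha(a)^2$ by the factor $(\log n)^{2\beta+1}$, so the centering error is dominated by a constant multiple of the credible radius.

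Third, I would transfer the deterministic-$a$ estimates to the random $\tilde a_n$. The three quantities above depend on $a$ through the smoothly varying coefficients $ae^{i/a}/(ae^{i/a}+n)$ and $(ae^{i/a}+n)^{-1}$, which are Lipschitz in $a$ block-by-block; covering the localization interval by $O(\log n)$ sub-intervals on which these coefficients are essentially constant, and taking a union bound for the Gaussian quadratic-form deviations on each, suffices. Combining with Step 2 then yields, for a sufficiently large constant $L$, that $\|f_0 - \hat f_{\tilde a_n}\|_2 \leq L\, r_\alpha(\tilde a_n)$ with $P_0$-probability $\to 1$, uniformly over $f_0\in\Theta_s^\beta(m,M)$. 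The principal obstacle is precisely this uniformization: the data-dependence of $\tilde a_n$ must be handled without losing the logarithmic gap between bias and spread. A closely related subtlety is the sharp \emph{lower} bound on $\hat a_n$ in Step 1, which is what prevents $\tilde a_n$ from remaining too small to escape oversmoothing, and is the place where the self-similarity assumption $\min_i i^{1+2\beta} f_{0,i}^2 \geq m$ enters decisively.
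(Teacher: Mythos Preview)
Your overall strategy matches the paper's exactly: localize $\tilde a_n$ to a deterministic interval, then compare radius, centered posterior mean, and bias uniformly over that interval, with the logarithmic undersmoothing ensuring the squared bias is dominated by the squared radius. Two corrections are worth noting.

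First, a minor quantitative one: the correct localization is $\tilde a_n\asymp (n/\log n)^{1/(1+2\beta)}$, not $n^{1/(1+2\beta)}$; correspondingly $\hat a_n\asymp n^{1/(1+2\beta)}(\log n)^{-1-1/(1+2\beta)}$ rather than $n^{1/(1+2\beta)}/\log n$. This does not affect the structure of the argument.

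Second, and more substantively: the sharp lower bound on $\hat a_n$ you invoke in Step~1 does \emph{not} come from the proof of Theorem~\ref{thm: self similar c-e}, which uses only the upper bound via Proposition~\ref{prop: bound a}. Nor does it follow from the general bound $\hat a_n\geq\underline a_n$ of Theorem~\ref{th: consistency a}: the threshold $B\log n$ in the definition of $\underline a_n$ costs an extra factor $(\log n)^{1/(1+2\beta)}$, and for $\beta<1/2$ this is fatal---at the lower end of the resulting localization interval the squared bias exceeds the squared radius by a diverging logarithmic factor. The paper therefore introduces a modified bound $\underline a_n':=\sup\{a:g_n(a,f_0)\geq B\}$ with \emph{constant} threshold, re-runs the score-function analysis of Section~\ref{sec: mmle:lb} on the wider interval $[\underline a_n'/2,\underline a_n']$, and uses the self-similarity lower bound $g_n(a,f_0)\gtrsim m\, n\, a^{-1-2\beta}\log^{-2-2\beta}(n/a)$ to conclude $\underline a_n'\gtrsim (n/\log n)^{1/(1+2\beta)}/\log n$. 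This is the bulk of the new technical work in the proof; you correctly flag it as the crux, but it requires a fresh argument rather than a citation.
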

The proof of the theorem is deferred to Section \ref{sec: mod_eb}.

\section{Simulation study}\label{sec: simulation}
In this section we investigate the numerical properties of the  Gaussian process prior with (approximately) squared exponential covariance kernel. First we consider the Gaussian white noise model and the prior \eqref{def: prior}. We show that the corresponding Bayesian uncertainty quantification is misleading for various regularly behaving functional parameters. We also demonstrate that a different choice of the covariance kernel or a modified version of the empirical Bayes procedure results in more accurate uncertainty statements. Then we consider the (from practical point of view) more relevant nonparametric regression and classification models, where we also demonstrate the sub-optimal behaviour of the (standard) empirical Bayes method with squared exponential covariance kernel and show that the proposed modification results in superior performance compared to it.

\subsection{Gaussian white noise model}
First we demonstrate the suboptimal performance of the Gaussian process with (approximately) squared exponential covariance kernel \eqref{def: prior} compared to modified versions of the empirical Bayes procedure and to the Gaussian process prior with polynomially decaying variances in the series representation, see \cite{knapikSVZ2012,szabo:vdv:vzanten:13}. Let us consider the function $f_1\in L_2[0,1]$ given by their Fourrier coefficients $f_{1,i}=i^{-3/2}\sin(i)$, for $i=1,2,...$, respectively, relative to the Fourrier eigenbasis $\psi_i(t)=\sqrt{2}\cos(\pi(i-1/2)t)$. Note that although the function lies outside of the self-similar function class \eqref{def: selfsim}, it has essentially the same behaviour. In Figure \ref{fig1} we visualize the $95\%$ credible sets (light blue or light red), the posterior mean (blue or red) and the true function (black), by simulating 2000 draws from the empirical Bayes posterior distribution and plotting the closest $95\%$ of them in $L_2$-norm to the posterior mean. We note that all credible sets were constructed without any inflation factor, i.e. $L_n=1$ was taken (except of the case where the choice $L_n=\log n$ was pre-specified). The credible sets are drawn for signal-to-noise ratio $n=100$, $500$, $1000$ and $5000$, respectively. We also plot the same credible sets blown-up by a $\log n$ factor, the credible sets obtained by the modified empirical Bayes procedure (where the MMLE of the scaling parameter $a$ was multiplied by $\log n$) and the empirical Bayes credible sets corresponding to the prior $f\sim\otimes_{i=1}^{\infty}N(0,i^{-1-2\alpha})$, with hyper-parameter $\alpha$ estimated by the MMLE. One can see that the standard marginal likelihood empirical Bayes method provides too narrow credible sets failing to cover the underlying true function. Also note that both modifications of the empirical Bayes credible sets provide good coverage, but in contrast to the overly conservative approach of inflating the credible sets with a logarithmic factor the modification of the MMLE results in more informative uncertainty statement (i.e. smaller credible sets).

\begin{figure}[!h]
	\centering
	\includegraphics[scale=0.9]{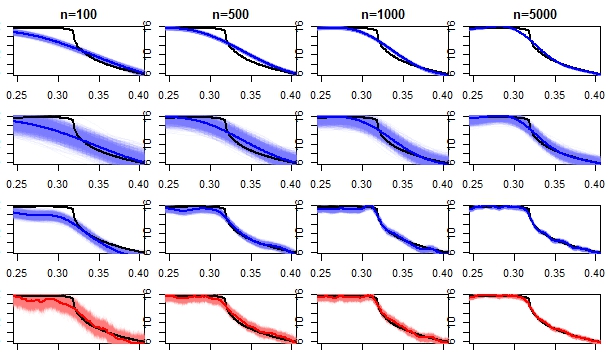}
	\caption{\scriptsize Empirical Bayes credible sets for the function $f_1$ (drawn in black) zoomed in to the interval $x\in[0.25,0.4]$. First line: credible set (in light blue) and posterior mean (blue curve) corresponding to the prior with exponentially decaying variance. Second line: credible set (in light blue) blown-up by a $\log n$ factor ($L_n = \log n$) and posterior mean (blue curve) corresponding to the prior with exponentially decaying variance. Third line: credible set (in light blue) and posterior mean (blue curve) corresponding to the prior with exponentially decaying variance and modified empirical Bayes procedure (rescaling factor multiplied by $\log n$). Last line: credible set (in light red) and posterior mean (red curve) corresponding to the prior with polynomially decaying variance. From left to right the signal to noise ratio is $n=100,500,1000, 5000$.}
\label{fig1}
\end{figure}

\subsection{Nonparametric regression and classification}
In this section we demonstrate on a simulation study that the results derived for the Gaussian white noise model 
generalize to more complicated statistical models as well. We consider the popular nonparametric regression and classification models specifically.  The empirical Bayes posteriors, posterior means and credible sets are computed in both cases using the MatLab package gpml. 

In the nonparametric regression model we assume to observe pairs of random variables $(X_1,Y_1),(X_2,Y_2),...(X_n,Y_n)$, where 
\begin{align*}
Y_i=f_0(X_i)+\varepsilon_i,\qquad \varepsilon_i\stackrel{iid}{\sim} N(0,\sigma^2),\quad X_i\stackrel{iid}{\sim}U(0,1),
\end{align*}
and the aim is to estimate the unknown nonparametric regression function $f_0$. In the Bayesian approach we endow $f_0$ with a Gaussian process prior with squared exponential kernel and estimate the tunning parameter using the MMLE. 

In this simulation study we take the Fourier coefficients of the underlying true function $f_2$ to be $f_{2,i}=i^{-3/2}\cos(i)$, $i=1,2,...$. We take $\sigma^2=1/2$, but in the procedure it is considered to be unknown and estimated with the MMLE $\hat{\sigma}^2$. We plot the true function (black), the posterior mean (blue), and the posterior pointwise credible intervals (dashed blue) $[\hat{f}(x)-q_{0.025}\sqrt{\hat{c}(x,x)},\hat{f}(x)+q_{0.025}\sqrt{\hat{c}(x,x)}]$, where $\hat{f}$ is the posterior mean, $q_\alpha$ the $\alpha$-th quantile of the standard normal distribution and $\hat{c}(.,.)$ the posterior covariance kernel. We consider the MMLE empirical Bayes method with and without the $\log n$ inflation factor for the credible set, the modified empirical Bayes method (where the MMLE was multiplied by $\log n$), and finally the empirical Bayes method for Mat\'ern covariance kernel.  We take the sample size to be $n=100,500,1000$ and $2000$. Observe in Figure \ref{fig: regression} that the standard MMLE empirical Bayes method provides unreliable uncertainty quantification in certain points, while the two modified squared exponential credible sets and the empirical Bayes credible sets for the  Mat\'ern kernel capture the underlying functional parameter of interest better. Also note that by multiplying the MMLE of the scaling parameter by a $\log n$ factor in the squared exponential kernel case we do not get an overly conservative credible set, unlike in the case when the radius is inflated with a logarithmic factor. Finally, we note that the computation time using the squared exponential kernel is much smaller than working with the Mat\'ern kernel.

We also investigate empirically the frequentist coverage probabilities of the pointwise credible sets by repeating the experiment 100 times and reporting the frequency that the function at given points (we consider $x=(0.25,0.3188,0.75)$ with $0.3188=\argmax_{x\in[0,1]} f_2(x)$) is included in the credible interval, see Table \ref{table: regression1}. Moreover, Table \ref{table: radius} shows the average size of the pointwise credible intervals (i.e. $2q_{0.025}\sqrt{\hat{c}(x,x)}$) depending on the sample size $n$ and the procedure used to compute the credible sets. One can observe similar behaviour to what we have described above. We note that we have left out the Mat\'ern kernel from Table \ref{table: radius} due to the overly long computational time.

\begin{figure}[!h]
	\includegraphics[scale=0.5]{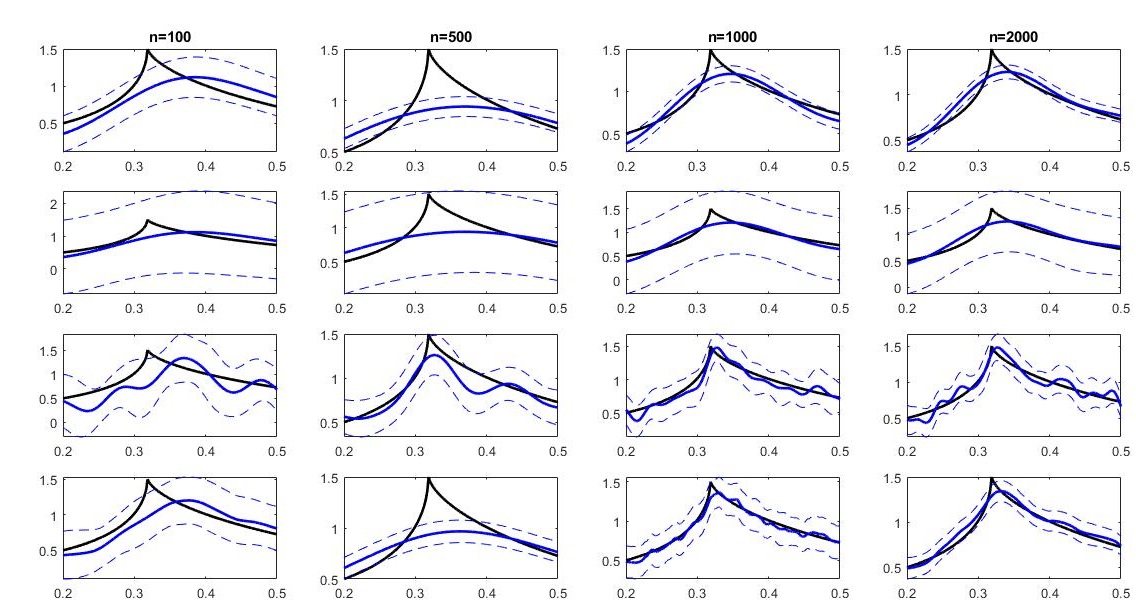}
	\caption{\scriptsize Empirical Bayes credible sets for the  regression function $f_2$ (drawn in black), zoomed in to the interval $x\in[0.2,0.5]$. The posterior means are drawn by solid blue line, while the $95\%$ pointwise credible sets by dashed blue curves. In the first row we plot the MMLE empirical Bayes method, in the second row the MMLE empirical Bayes method with a $\log n$ blow up factor, the third row the modified MMLE empirical Bayes method using squared exponential Gaussian process prior, while in the fourth row we plot the MMLE empirical Byes credible sets using a Matern kernel. From left to right the sample size is $n=100,500,1000,2000$.}
\label{fig: regression}
\end{figure}

\begin{table}[!h]
\centering
	\begin{tabular}{c|c|c|c|c|c|c|c|c|c}
		& \multicolumn{3}{c|}{$x=0.25$} & \multicolumn{3}{c|}{$x=0.3188$} & \multicolumn{3}{c}{$x=0.75$}\\
		$n =$ & $100$ & $500$ & $1000$ & $100$ & $500$ & $1000$ & $100$ & $500$ & $1000$\\ \hline
		Method 1 & 0.84 & 0.69 & 0.57 & 0.01 & 0.01 & 0.00 & 0.98 & 0.92 & 0.97 \\
		Method 2 & 1.00 & 1.00 & 1.00 & 0.96 & 0.98 & 1.00 & 1.00 & 1.00 & 1.00\\
		Method 3 & 0.98 & 0.98 & 0.97 & 0.35 & 0.55 & 0.50 & 0.99 & 0.96 & 0.98\\
	\end{tabular}
	\caption{\scriptsize Frequencies that $f_2(x)$ is inside of the corresponding credible interval for the squared exponential Gaussian process prior at given points $x\in\{0.25,0.3188,0.75\}$. Method 1: MMLE empirical Bayes procedure, Method 2: empirical Bayes procedure with $\log n$ blow up factor, Method 3: modified empirical Bayes procedure (MMLE multiplied by $\log n$). From left to right the sample size is $n=100,500,1000$.}
\label{table: regression1}
\end{table}

\begin{table}[!h]
\centering
	\begin{tabular}{c|c|c|c}
		$n =$ & 100 & 500 & 1000\\ \hline
		Method 1 & 0.3956 & 0.2367 & 0.1814\\
		Method 2 & 1.8218 & 1.4711 & 1.2533\\
		Method 3 & 0.7541 & 0.5279 & 0.4262\\
	\end{tabular}
	\caption{\scriptsize Average size of the pointwise credible intervals (i.e. $2q_{0.025}\sqrt{\hat{c}(x,x)}$) for $f_2(x)$. Method 1: MMLE empirical Bayes procedure, Method 2: empirical Bayes procedure with $\log n$ blow up factor, Method 3: modified empirical Bayes procedure (MMLE multiplied by $\log n$). From left to right the sample size is $n=100,500,1000$.}
\label{table: radius}
\end{table}

Next we consider the nonparametric classification problem. Let us assume that we observe the binary random variables $Y_1,Y_2,...,Y_n\in\{0,1\}$, with
\begin{align*}
P(Y_i=1)=p(X_i),\qquad X_i\stackrel{iid}{\sim}U(0,1),
\end{align*}
for some nonparametric function $p(x):\, [0,1]\mapsto [0,1]$. We write $p(x)$ in the form $p(x)= \psi(f(x))$, with $\psi(x)=e^{x}/(1+e^{x})$, for some function $f(x):\,[0,1]\mapsto  \mathbb{R}$. In the Bayesian approach we endow the functional parameter $f(x)$ with a Gaussian process prior with squared exponential kernel. We have again excludes the Mat\'ern kernel from the simulations due to the high computational complexity.

We design similar experiments for the nonparametric classification model as for the nonparametric regression model above, with sample sizes $n=100,200,500$ and $1000$ and the same $f_2$ as above. We plot the pointwise credible intervals for $f_2$ corresponding to the empirical Bayes procedure, with and without a $\log n$ inflation factor, and to the modified empirical Bayes procedure (where the MMLE is multiplied by a $\log n$ factor), see Figure \ref{fig: class}. One can observe that the standard MMLE empirical Bayes procedure produces unreliable uncertainty statements, while by blowing up the credible sets with a logarithmic factor we get overly conservative uncertainty quantifications. These problems are resolved by considering the modified empirical Bayes method, which captures the shape of the underlying functional parameter  better and provides more reliable uncertainty statements. We also collect the empirical estimation of the frequentist coverage probabilities of the underlying functional parameter $f_2(x)$ at points $x=(0.25,0.3188,0.75)$ in Table \ref{table: class1}, underlying the conclusions drawn from the figures above. Moreover, Table \ref{table: radius2} shows the size of the average credible interval.

\begin{figure}[!h]
	\centering
	\includegraphics[scale=0.5]{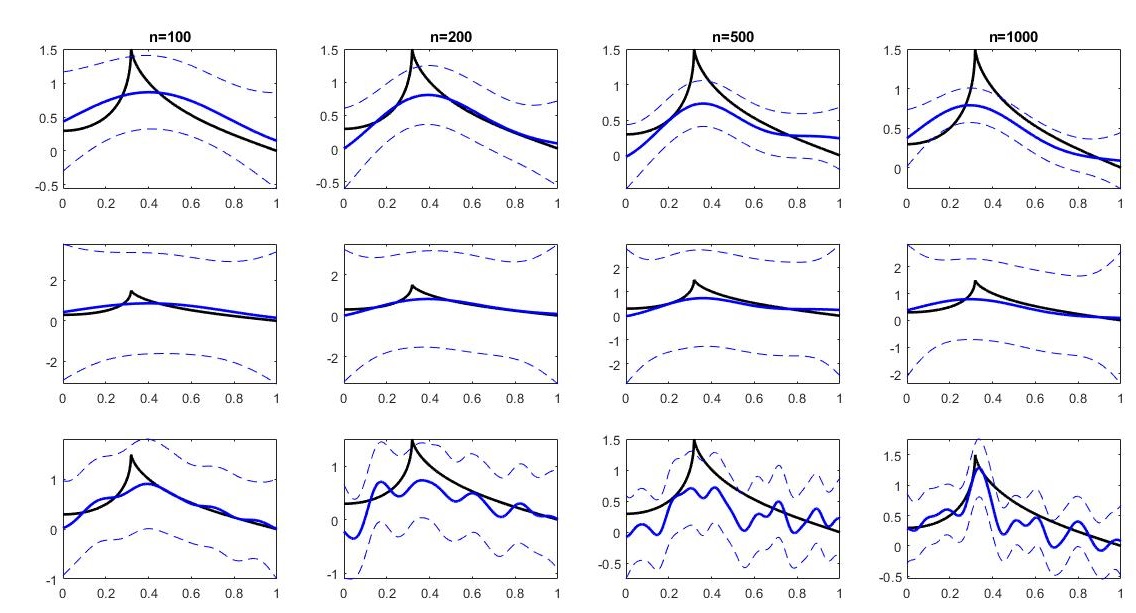}
	\caption{\scriptsize Empirical Bayes credible sets using squared exponential Gaussian process priors in the classification model for the function $f_2$ (drawn in black). The posterior means are drawn by solid blue line, while the $95\%$ pointwise credible intervals by dashed blue curves. In the first row we plotted the MMLE empirical Bayes method, in the second row the MMLE empirical Bayes method with a $\log n$ blow up factor, while in the  the third row the modified MMLE empirical Bayes method. From left to right the sample size is $n=100,200,5000,1000$.}
\label{fig: class}
\end{figure}

\begin{table}[!h]
\centering
	\begin{tabular}{c|c|c|c|c|c|c|c|c|c}
		& \multicolumn{3}{c|}{$x=0.25$}& \multicolumn{3}{c|}{$x=0.3188$}&\multicolumn{3}{c}{$x=0.75$}\\
		$n=$& $100$ & $200$ & $500$& $100$ & $200$ & $500$&100 &200 &500\\ \hline
		Method 1 & 0.90 & 0.90 & 0.89 & 0.29 & 0.16 & 0.12& 0.92 & 0.88 & 0.85\\
		Method 2 & 1.00 & 1.00 & 1.00& 0.98 & 0.98 & 1.00&1.00 & 1.00 & 1.00\\
		Method 3 & 0.91 & 0.94 & 0.95& 0.42 & 0.36 & 0.45& 0.94 & 0.94 & 0.95\\
	\end{tabular}
	\caption{\scriptsize  Frequencies that $f_2(x)$ is inside of the corresponding credible interval for squared exponential Gaussian process prior at given points $x\in\{0.25,0.3188,0.75\}$. Method 1: MMLE empirical Bayes procedure, Method 2: empirical Bayes procedure with $\log n$ blow up factor, Method 3: modified empirical Bayes procedure (MMLE multiplied by $\log n$). From left to right the sample size is $n=100,200,500$.}
\label{table: class1}
\end{table}

\begin{table}[!h]
\centering
	\begin{tabular}{l|c|c|c}
		& n=100 & n=200 & n=500\\ \hline
		Method 1 & 3.2672 & 0.8209  & 0.3485 \\
		Method 2 $\log n$ & 15.0461 & 4.3495 & 2.1661\\
		Method 3  & 3.6777 & 1.2675 & 0.7575\\
	\end{tabular}
	\caption{\scriptsize Average size of the pointwise credible intervals $2q_{0.025}\sqrt{\hat{c}(x,x)}$ for $f_2$ corresponding to the empirical Bayes procedure (first line), to empirical Bayes procedure with $\log n$ blow up factor (second line) and to the modified empirical Bayes procedure (third line). From left to right the sample size is $n=100,200,500$.}
\label{table: radius2}
\end{table}

\section{Discussion}\label{sec: discussion}
We have shown that the MMLE empirical Bayes method for Gaussian process prior with (a slightly modified version of the) squared exponential covariance kernel produces misleading uncertainty statement in context of the Gaussian white noise model. The derived negative results were demonstrated on a simulation study in context of the Gaussian white noise model and extended to the nonparametric regression and classification models as well. Hence we can conclude that one has to be very cautious when applying empirical Bayes methods with squared exponential Gaussian processes for uncertainty quantification as typically they provide misleading confidence statements, due to over-smoothing behaviour of the MMLE. We note that the bad performance of the prior \eqref{def: prior} is not due to the factor $a^{-1}$ in the variance, because similar (but easier) computations show that the prior without the  $a^{-1}$ factor behaves suboptimally as well. Hence the over-smoothing and as a result the misleading uncertainty statements are due to the exponential decay of the eigenvalues of the prior and can not be corrected by simply multiplying the variance with the scaling parameter $a$.

One can compensate the haphazard uncertainty statements by blowing up the credible sets with a $\log n$ factor, however, this approach is not appealing from a practical perspective, as demonstrated in our simulation study as well. Instead we propose to modify the MMLE by multiplying it with $\log n$ to compensate for the over-smoothing. This procedure is less conservative than the previous one and hence provides more information about the uncertainty of the method. One can also consider different covariance kernels, with polynomially decaying eigenvalues, like the Mat\'ern kernel, however, these procedures can be computationally less appealing.

\appendix
\section{Some properties of the MMLE}

\subsection{Deterministic bounds}
As a first step we provide deterministic bounds for the marginal maximum likelihood estimator $\hat{a}_n$ of the rescaling hyper-parameter $a$. Let us introduce the following functions for $a\in [1,\infty)$:
\begin{align}\label{eq: h}
h_n(a,f_0)&:=\frac{1}{\log^2\big(n/a\big)}\sum\limits_{i=1}^{\infty}\frac{n^2ie^{i/a}f_{0,i}^2}{a(ae^{i/a}+n)^2},\\
\label{eq: g}
g_n(a,f_0)&:=\frac{1}{\log^2\big(n/a\big)}\sum\limits_{i=2a}^{\infty}\frac{n^2(i-a)e^{i/a}f_{0,i}^2}{a(ae^{i/a}+n)^2}.
\end{align}
These functions are derived from the expected value of the score function, see Section \ref{sec: consistency a}. Then let us define the deterministic bounds $\underline{a}_n$ and $\overline{a}_n$ for $\hat{a}_n$ with the help of the functions $h_n$ and $g_n$ as 
\begin{align}
\underline{a}_n&:=\sup\{a\in[1,A_n]: g_n(a,f_0)\geq B\log n\},\nonumber\\
\overline{a}_n&:=\sup\{a\in[K_0,A_n]:h_n(a,f_0)\geq b\},\label{def: bounds}
\end{align}
with some $b,B,K_0>0$ to be specified later and $A_n=o(n)$ given in \eqref{def: MMLE}. Then we show that these bounds sandwich $\hat{a}_n$ with high probability.

\begin{theorem}\label{th: consistency a}
The MMLE $\hat{a}_n$ satisfies 
\begin{align}
\inf\limits_{f_0\in\ell_2(M)}P_{0}(\underline{a}_n\leq\hat{a}_n\leq\overline{a}_n)\to 1,
\end{align}
for $\underline{a}_n,\overline{a}_n$ defined in \eqref{def: bounds}.
\end{theorem}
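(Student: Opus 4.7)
The plan is to study the score function $\ell_n'(a):=\partial_a\ell_n(a)$ and show that, with $P_0$-probability tending to one uniformly in $f_0\in\ell_2(M)$, one has $\ell_n'(a)>0$ on $[1,\underline{a}_n]$ and $\ell_n'(a)<0$ on $[\overline{a}_n,A_n]$; since $\ell_n$ is smooth, these two sign statements force the maximiser $\hat{a}_n$ to lie in $[\underline{a}_n,\overline{a}_n]$. A direct differentiation, using $\phi_i(a):=ae^{i/a}$ and $\phi_i'(a)=e^{i/a}(a-i)/a$, gives
\begin{align*}
\ell_n'(a)=\frac{1}{2a}\sum_{i=1}^{\infty}\frac{e^{i/a}(a-i)}{(ae^{i/a}+n)^2}\Bigl[n+\frac{n^2}{ae^{i/a}}-n^2Y_i^2\Bigr],
\end{align*}
and since $E_0[Y_i^2]=f_{0,i}^2+1/n$ the expected score decomposes cleanly as $E_0[\ell_n'(a)]=B(a)-S(a)$ with
\begin{align*}
B(a)=\frac{n^2}{2a^2}\sum_{i\ge 1}\frac{a-i}{(ae^{i/a}+n)^2},\qquad S(a)=\frac{n^2}{2a}\sum_{i\ge 1}\frac{e^{i/a}(a-i)f_{0,i}^2}{(ae^{i/a}+n)^2}.
\end{align*}
Observe that the restriction of $-S(a)$ to indices $i\ge 2a$ equals exactly $\tfrac12\log^2(n/a)\,g_n(a,f_0)$; moreover, bounding $|a-i|\le i$ on $i\ge 2a$ and using $\|f_0\|_2^2\le M$ on $i<2a$ yields the rough upper bound $|S(a)|\lesssim \log^2(n/a)\,h_n(a,f_0)+O(1)$.

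Next I would perform a sign analysis of the two pieces. For the bias $B(a)$, an Euler--Maclaurin comparison with an integral, separating the ranges $i<a$, $a<i<a\log(n/a)$ (where the denominator is still $\asymp n^2$) and $i>a\log(n/a)$ (where it decays exponentially), shows that the negative contributions from $i\in(a,a\log(n/a))$ dominate and give $B(a)\le -c'\log^2(n/a)$ for some absolute $c'>0$, as soon as $a\ge K_0$ with $K_0$ a sufficiently large numerical constant that makes this asymptotic kick in. Combined with the above description of $S(a)$, for $a\le \underline{a}_n$ the lower bound $g_n(a,f_0)\ge B\log n$ yields
\begin{align*}
E_0[\ell_n'(a)]\ge \tfrac{B}{2}\log n\,\log^2(n/a)-O(\log^2(n/a))>0
\end{align*}
for $B$ large enough, while for $a\ge \overline{a}_n$ the upper bound $h_n(a,f_0)\le b$ yields $E_0[\ell_n'(a)]\le (\tfrac{b}{2}-c')\log^2(n/a)+O(1)<0$ as soon as $b<c'$. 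The margin is of order $\log^2(n/a)$ on the upper side and $\log^3(n/a)$ on the lower side, leaving room for the stochastic fluctuation.

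It remains to control $\ell_n'(a)-E_0[\ell_n'(a)]$ uniformly in $a$. Writing $n^2Y_i^2=n^2f_{0,i}^2+2n^{3/2}f_{0,i}Z_i+n(Z_i^2-1)+n$ expresses the centred score as a linear combination of the independent centred variables $Z_i$ and $Z_i^2-1$ with weights $w_i(a)=-\tfrac{n\,e^{i/a}(a-i)}{2a(ae^{i/a}+n)^2}$, so a Hanson--Wright / Laurent--Massart inequality gives
\begin{align*}
P_0\bigl(|\ell_n'(a)-E_0[\ell_n'(a)]|>t\bigr)\le 2\exp\bigl(-c\min\{t^2/V(a),\,t/W(a)\}\bigr),
\end{align*}
where routine estimates splitting at $i\approx a\log(n/a)$ and using $\|f_0\|_2^2\le M$ show that $V(a)$ and $W(a)$ are at most polylogarithmic in $n$, with constants independent of $f_0$. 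Hence at fixed $a$ the fluctuation is of size $\log^{c_0}n$ for some small constant $c_0$, strictly smaller than the deterministic signal identified above. Uniformity in $a\in[1,A_n]$ follows from a standard discretisation: $\ell_n'$ is polynomially Lipschitz on $[1,A_n]$ (all elementary summands being smooth in $a$ with derivatives controlled by powers of $n$ and $A_n$), so a union bound over a grid of polynomial density loses only a $\log n$ factor, which is absorbed into the $B\log n$ safety margin built into the definition of $\underline{a}_n$.

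The main obstacle is the sharp sign analysis of $B(a)$: because the summand changes sign at $i=a$ and the bulk of the mass sits near the effective dimension $i^\star(a)\approx a\log(n/a)$, proving that the negative block genuinely dominates requires tracking cancellations carefully via an Euler--Maclaurin comparison, and it is this analysis that dictates both the threshold $K_0$ in the definition of $\overline{a}_n$ and the admissible range of the constant $b$. All other quantities in the argument depend on $f_0$ only through $\|f_0\|_2^2\le M$, so the resulting probability bound is uniform over $f_0\in\ell_2(M)$, as required.
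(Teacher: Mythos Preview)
Your decomposition $E_0[\ell_n'(a)]=B(a)-S(a)$ and the sign analysis of the second-order piece $B(a)$ are essentially the same as the paper's, and the upper-bound argument for $a\ge\overline{a}_n$ is correct and matches the paper almost verbatim.

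The lower-bound argument, however, contains a genuine gap. From the definition $\underline{a}_n=\sup\{a:g_n(a,f_0)\ge B\log n\}$ you may only conclude that $g_n(\underline{a}_n,f_0)=B\log n$ (and $g_n(a,f_0)<B\log n$ for $a>\underline{a}_n$); you \emph{cannot} conclude $g_n(a,f_0)\ge B\log n$ for every $a\le\underline{a}_n$, because $a\mapsto g_n(a,f_0)$ is not monotone for general $f_0\in\ell_2(M)$. Consequently your claimed inequality $E_0[\ell_n'(a)]\ge\tfrac{B}{2}\log n\,\log^2(n/a)-O(\log^2(n/a))$ fails on $[1,\underline{a}_n]$, and with it the conclusion that $\ell_n'(a)>0$ there.

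The paper circumvents exactly this issue by \emph{not} attempting to show $\ell_n'(a)>0$ on all of $[1,\underline{a}_n]$. Instead it proves the crude lower bound $\mathbb{M}_n(a)\ge -C\log^2(n/a)$ on the whole interval (no information on $g_n$ is used here), and then shows that on the short window $\mathcal{I}_n=[\underline{a}_n\log(n/\underline{a}_n)/(1+\log(n/\underline{a}_n)),\underline{a}_n]$ one has $g_n(a,f_0)\asymp B\log n$ by a direct comparison with $g_n(\underline{a}_n,f_0)$, yielding $\mathbb{M}_n(a)\ge\tilde c_0 B\log^3(n/\underline{a}_n)$ there. Integrating, the large positive contribution over $\mathcal{I}_n$ (of size $\tilde c_0 B\underline{a}_n\log^2(n/\underline{a}_n)$) dominates the possible negative contribution over the rest of $[1,\underline{a}_n]$ (of size at most $C\underline{a}_n\log^2(n/\underline{a}_n)$), so $\ell_n(\underline{a}_n)>\ell_n(a)$ for all $a<\underline{a}_n$. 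You would need a version of this integrated argument to close the gap; alternatively, if you can establish a monotonicity or quasi-monotonicity of $g_n(\cdot,f_0)$ uniformly in $f_0\in\ell_2(M)$, your original route would go through, but no such result is available.

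A secondary remark: your uniform fluctuation control via Hanson--Wright plus a polynomial grid is a legitimate alternative to the paper's chaining (Corollary~2.2.5 of \cite{vdv:wellner:96}), but note that on $[1,\underline{a}_n]$ the deterministic margin you actually have is only $O(\log^2(n/a))$ rather than $O(\log^3 n)$, so the union-bound loss has to be tracked more carefully than ``absorbed into the $B\log n$ safety margin''.
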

\begin{proof} See Section \ref{sec: consistency a}.
\end{proof}

We also derive an upper bound for $\overline{a}_n$ in case the true function belongs to the hyper-rectangle with regularity hyper-parameter $\beta$.

\begin{proposition}\label{prop: bound a}~For every $\beta\geq\beta_0$ and $\gamma>0$ there exist  $C_{\beta,b,M}, C_{\gamma,b,M}>0$ such that
\begin{align*}
\sup_{f_0\in \Theta^{\beta}(M)}\overline{a}_n&\leq C_{\beta,b,M} n^{1/(1+2\beta)}(\log n)^{-1-1/(1+2\beta)},\\
\sup_{f_0\in A^{\gamma}(M)}\overline{a}_n&\leq  C_{\gamma,b,M}.
\end{align*}
\end{proposition}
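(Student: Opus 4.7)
The plan is to derive a deterministic upper envelope for $h_n(a, f_0)$ on each class of functions and then read off the claimed bound on $\overline{a}_n$ by inversion. Write the summand as $T_i(a) := n^2 i e^{i/a}/[a(ae^{i/a}+n)^2]$ and let $i^*(a) := a\log(n/a)$ be the ``critical index'' where $ae^{i/a}$ crosses $n$; two elementary upper bounds are available for $T_i(a)$, namely $T_i(a) \leq ie^{i/a}/a$ (from $(ae^{i/a}+n)^2 \geq n^2$, valid for every $i$) and $T_i(a) \leq n^2 i/(a^3 e^{i/a})$ (from $(ae^{i/a}+n)^2 \geq a^2 e^{2i/a}$, valid for $i \geq i^*(a)$). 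I will split the sum at $i^*(a)$ and use the first bound below, the second above.

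For the hyper-rectangle case $f_0 \in \Theta^\beta(M)$, substitute $f_{0,i}^2 \leq Mi^{-1-2\beta}$ and study $g(i) := i^{-2\beta} e^{i/a}$, which is increasing on $[2\beta a, \infty)$ with ratio of consecutive terms tending to $e^{1/a}$. A geometric-sum estimate using $(1-e^{-1/a})^{-1} \leq 2a$ bounds $\sum_{i \leq i^*(a)} g(i)$ by a constant multiple of $a\,g(i^*(a)) = a \cdot (a\log(n/a))^{-2\beta} \cdot (n/a)$, and an entirely analogous bound handles the decaying tail $\sum_{i > i^*(a)} i^{-2\beta}/e^{i/a}$. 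Combining the two halves, multiplying by $M/a$ (resp.\ $Mn^2/a^3$) and dividing by $\log^2(n/a)$ gives
\begin{align*}
h_n(a, f_0) \;\leq\; \frac{C_{\beta, M}\,n}{a^{1+2\beta}\,\log^{2+2\beta}(n/a)}.
\end{align*}
On the admissible range $a\in[K_0,A_n]$ with $A_n=o(n)$ one has $\log(n/a) \asymp \log n$, so $h_n(a, f_0) \geq b$ forces $a^{1+2\beta}(\log n)^{2+2\beta} \lesssim Mn/b$, hence $a \leq C_{\beta,b,M}\, n^{1/(1+2\beta)} (\log n)^{-1-1/(1+2\beta)}$, and taking the supremum over such $a$ yields the first claim.

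For the analytic case $f_0 \in A^\gamma(M)$ no peak analysis is needed: the coarse bound $T_i(a) \leq ie^{i/a}/a$ together with $f_{0,i}^2 \leq Me^{-2i\gamma}$ yields
\begin{align*}
\sum_{i=1}^{\infty} T_i(a)\, f_{0,i}^2 \;\leq\; \frac{M}{a}\sum_{i=1}^{\infty} i\,e^{i(1/a - 2\gamma)}.
\end{align*}
Once $a \geq 1/\gamma$ the exponent is at most $-i\gamma$ and the series converges to $C_\gamma := \sum_{i\geq 1} ie^{-i\gamma} < \infty$, giving $h_n(a, f_0) \leq MC_\gamma/[a\log^2(n/a)]$. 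For any $a \geq \max(1/\gamma,\,MC_\gamma/b)$ and any $n$ large enough that $\log(n/a) \geq 1$, this is strictly less than $b$, so $\overline{a}_n$ does not exceed the constant $C_{\gamma, b, M}$ obtained by absorbing the finitely many small $n$ into the threshold.

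The principal obstacle is the hyper-rectangle estimate, specifically the bookkeeping of the geometric-summation factor $(1-e^{-1/a})^{-1}\asymp a$ on each side of the peak $i^*(a)$, together with the verification that the ``off-peak'' contribution from indices $i \ll i^*(a)$ (where $g$ has not yet entered its geometric regime) is of strictly lower order. The latter reduces to comparing $(a\log(n/a))^{1-2\beta}\sqrt{n/a}$ with the peak order $n(a\log(n/a))^{-2\beta}$ and observing that the ratio is of the order $\log(n/a)\sqrt{a/n} \to 0$ whenever $a \leq A_n = o(n)$; the analytic case is comparatively routine because the exponential decay of $f_{0,i}^2$ dominates $e^{i/a}$ as soon as $a \geq 1/\gamma$, making the splitting and peak analysis unnecessary.
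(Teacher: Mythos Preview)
Your argument is correct and mirrors the paper's: both split at $I_a=a\log(n/a)$, bound the two halves via $(ae^{i/a}+n)^2\geq n^2$ and $(ae^{i/a}+n)^2\geq a^2e^{2i/a}$ respectively, reach the envelope $h_n(a,f_0)\lesssim Mn\,a^{-1-2\beta}(\log(n/a))^{-2-2\beta}$, and then invert. One minor caution: the claim $\log(n/a)\asymp\log n$ is not literally uniform on $[K_0,A_n]$ for arbitrary $A_n=o(n)$ (take $A_n=n/\log n$), but since $a\mapsto a^{1+2\beta}\log^{2+2\beta}(n/a)$ is increasing on this range it suffices to evaluate at the claimed threshold $a=C\,n^{1/(1+2\beta)}(\log n)^{-1-1/(1+2\beta)}$, where the equivalence does hold---the paper handles this by directly checking that the envelope is below $b$ for all $a$ above that threshold.
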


\begin{proof}
Let us start with the proof of the first inequality. We show that  for any $b>0$ the inequality $h_n(a,f_0)< b$ holds for $a\geq C_{\beta,b,M} n^{1/(1+2\beta)}(\log n)^{-1-1/(1+2\beta)}$. Let us introduce the notation $I_a\equiv a\log(n/a)$. Note that by using the inequalities $ae^{i/a}+n\geq n$ and $ae^{i/a}+n\geq ae^{i/a}$, for all $a\geq 1$, and the sum of geometric series we get  
\begin{align*}
h_n(a,f_0)&\leq \frac{M}{\log^2(n/a)}\Big(\frac{1}{a}\sum_{i=1}^{I_a}e^{i/a}i^{-2\beta}+\frac{n^2}{a^3}\sum_{i>I_a}e^{-i/a}i^{-2\beta}\Big)\\
&\lesssim\frac{M}{\log^2(n/a)}\Big(I_a^{-2\beta}e^{I_a/a}+\frac{n^2}{a^2}I_a^{-2\beta}e^{-I_a/a}\Big)\\
&\asymp Ma^{-1-2\beta}n\big(\log(n/a)\big)^{-2-2\beta}.
\end{align*}
For $A_n\geq a\geq C_{\beta,b,M}n^{1/(1+2\beta)}(\log n)^{-1-1/(1+2\beta)}$ the preceding display is bounded by a multiple of $MC_{\beta,b,M}^{-1-2\beta}$. Then for sufficiently large choice of the constant $C_{\beta,b,M}$, we get that $h_n(a,f_0)< b$ for any $a\geq C_{\beta,b,M} n^{1/(1+2\beta)}(\log n)^{-1-1/(1+2\beta)}$.

The proof of the second inequality of the statement goes similarly, i.e.  we prove that for $a\geq C_{\gamma,b,M}$ we have $h_n(a,f_0)< b$. Note that by the sum of geometric series we get for every $a\geq 1/\gamma$ 
\begin{align*}
h_n(a,f_0)&\leq \frac{M}{\log^2(n/a)}\Big(\frac{1}{a}\sum_{i=1}^{I_a}ie^{i/a}e^{-2\gamma i}+\frac{n^2}{a^3}\sum_{i>I_a}ie^{-i/a}e^{-2\gamma i} \Big)\\
&\leq \frac{M}{a\log^2(n/a)}\sum_{i=1}^{\infty}ie^{-\gamma i} 
\leq \frac{M}{a(1-e^{-\gamma})^2\log^2(n/a)},
\end{align*}
which is bounded from above by arbitrarily small $b$ for sufficiently large choice of the constant $C_{\gamma,b,M}$.
\end{proof}

Finally we show that under the polished tail condition the deterministic bounds $\underline{a}_n, \overline{a}_n$ are close to each other.

\begin{lemma}\label{th: interval a polish tail}~For every $L_0,\rho,N_0\geq 1$ we have
\begin{align*}
\sup\limits_{f_0\in\Theta_{pt}(L_0,N_0,\rho)}\frac{\overline{a}_n\log(n/\overline{a}_n)}{\underline{a}_n\log(n/\underline{a}_n)}\leq K\log^2 n,
\end{align*}
with $K=8.1e^4\rho^2L_0 B /b$ for $n$ large enough.
\end{lemma}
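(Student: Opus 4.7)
The plan is to argue by contradiction. Suppose for some $f_0 \in \Theta_{pt}(L_0, N_0, \rho)$ we have $\overline{a}_n \log(n/\overline{a}_n) > K \log^2(n) \cdot \underline{a}_n \log(n/\underline{a}_n)$, and derive a contradiction with the defining thresholds $h_n(\overline{a}_n, f_0) \geq b$ and $g_n(\underline{a}_n, f_0) \geq B \log n$. Throughout write $I_a := a \log(n/a)$, $\tilde I := I_{\overline{a}_n}$, $\underline I := I_{\underline{a}_n}$, and $\Phi_a(i) := n^2 i e^{i/a}/(a(ae^{i/a}+n)^2)$ for the summand of $h_n(a,f_0)\log^2(n/a)$; the summand of $g_n \log^2$ differs by a factor $(i-a)/i$. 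By continuity of these sums in $a$ (dominated convergence) and treating the boundary cases $\overline{a}_n = A_n$ or $\underline{a}_n = A_n$ separately, I may assume $h_n(\overline{a}_n,f_0) = b$ and $g_n(\underline{a}_n,f_0) = B \log n$.

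The key analytic fact is a kernel envelope: with $j = i - I_a$,
\[
\Phi_a(i) = \frac{in}{a^2} \cdot \frac{e^{j/a}}{(e^{j/a}+1)^2} \leq \frac{in}{a^2} e^{-|i - I_a|/a},
\]
so $\Phi_a$ is peaked at $i = I_a$ (peak value $\simeq n\log(n/a)/(4a)$) with exponential width $a$, and the same localization applies to the $g_n$-kernel.

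The plan is to pinch the polished-tail block mass $m := \sum_{i=\underline I}^{\rho \underline I} f_{0,i}^2$ between matching lower and upper bounds.

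\emph{Lower bound via $h_n$.} Split $\sum_i \Phi_{\overline{a}_n}(i) f_{0,i}^2 = b \log^2(n/\overline{a}_n)$ at $i = \underline I$. Under the standing assumption $\tilde I/\underline I \gg \log^2 n$, the low-frequency part is damped by $e^{-(\tilde I - \underline I)/\overline{a}_n} \leq (\overline{a}_n/n)^{1/2}$ and contributes negligibly. For $i \geq \underline I$, bound $\Phi_{\overline{a}_n}(i)$ by its peak value $\tilde I n /(4 \overline{a}_n^2)$ and apply the polished-tail inequality $\sum_{i \geq \underline I} f_{0,i}^2 \leq L_0 m$ (valid since $\underline I \geq N_0$ for $n$ large). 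This yields
\[
m \;\geq\; \frac{4 b \tilde I}{n L_0}\bigl(1 - o(1)\bigr).
\]

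\emph{Upper bound via $g_n$.} The kernel $\Psi_{\underline{a}_n}$ is bounded below by a constant fraction of its peak $\simeq n \log(n/\underline{a}_n)/\underline{a}_n$ on a sub-interval of width $\underline{a}_n$ around $\underline I$; combining with $g_n(\underline{a}_n,f_0) = B \log n$ yields an upper bound of order $B \log n \cdot \underline I / n$ on the mass in this narrow sub-interval. To transfer this bound to the full block $[\underline I, \rho \underline I]$ I iterate the polished-tail inequality at the sequence of thresholds $\rho^{-k} \underline I$ (each still $\geq N_0$) in a telescoping cascade, using the kernel's explicit shape on each sub-scale. Careful bookkeeping yields
\[
m \;\leq\; C \rho^2 L_0 \cdot \frac{B \log n \cdot \underline I}{n},
\]
with the $e^4$ factor in $K$ emerging from the accumulated exponential-decay constants.

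\emph{Contradiction.} The two displays combine to give $\tilde I/\underline I \leq K' \log n$ for some $K' \leq K$, which for $n$ large contradicts the standing hypothesis $\tilde I/\underline I > K \log^2 n$.

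The principal obstacle is the upper-bound step: the polished-tail condition in its usual form controls \emph{tail} mass by block mass, not block mass by near-peak mass. The trick is to run it iteratively at geometrically spaced sub-thresholds inside the block, pairing each iteration with the appropriate kernel estimate from the envelope; tracking the multiplicative constants through this cascade is where the precise form $K = 8.1 e^4 \rho^2 L_0 B/b$ comes from.
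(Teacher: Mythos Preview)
Your overall architecture---pinch a block mass $m=\sum_{i=\underline I}^{\rho\underline I}f_{0,i}^2$ between a lower bound from $h_n(\overline{a}_n,\cdot)=b$ and an upper bound from $g_n(\underline{a}_n,\cdot)=B\log n$---is reasonable, and your lower-bound step is essentially correct. The gap is exactly where you flag it: the upper bound on $m$.

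The polished-tail inequality only lets you control $\sum_{i\ge N}f_{0,i}^2$ by $\sum_{i=N}^{\rho N}f_{0,i}^2$; it never bounds a full block by a narrower sub-block. Your proposed ``telescoping cascade at $\rho^{-k}\underline I$'' runs the inequality at smaller thresholds, which yields information about blocks \emph{below} $\underline I$, not about how the mass inside $[\underline I,\rho\underline I]$ is distributed. The kernel $\Psi_{\underline a_n}$ only sees a window of width $\underline a_n$ around $\underline I$, while the block has width $(\rho-1)\underline I\asymp\underline a_n\log(n/\underline a_n)$; nothing you have written forces the block mass to concentrate in that window, so the bound $m\lesssim B\log n\cdot\underline I/n$ does not follow. (If one instead sweeps $a$ over $(\underline a_n,\rho^2\underline a_n)$ and sums the window bounds, one picks up an extra $\log(n/\underline a_n)$ factor and ends with $\log^3 n$, not $\log^2 n$.)

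The paper avoids this obstacle by \emph{not} fixing the comparison scale at $\underline a_n$. It first partitions each block $[\rho^j,\rho^{j+1})$ into $\lceil\log n\rceil$ sub-intervals of relative width $\rho^{1/\log n}$, locates by pigeonhole the densest sub-interval (mass $\vartheta_{j,k_j}$), and then \emph{chooses} $\tilde a_1\in(\underline a_n,\rho^2\underline a_n)$ so that $I_{\tilde a_1}$ lands inside that densest sub-interval. The polished-tail bound now reads $\sum_{i\ge I_{\tilde a_1}}f_{0,i}^2\le L_0(\log n)\vartheta_{J_{\tilde a_1},k_{J_{\tilde a_1}}}$, and the kernel at $\tilde a_1$ is uniformly large on exactly that sub-interval. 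With this in hand the paper compares the ratio $h_n(\overline a_n,f_0)/h_n(\tilde a_1,f_0)$ directly---never isolating $m$---bounding it above by $C L_0\log n\cdot\tilde a_1\log(n/\tilde a_1)/(\overline a_n\log(n/\overline a_n))$ and below by $b/(2B\log n+o(1))$, since $\tilde a_1>\underline a_n$ gives $h_n(\tilde a_1,f_0)\le 2g_n(\tilde a_1,f_0)+o(1)\le 2B\log n+o(1)$. Rearranging yields the claimed $\log^2 n$ bound with the stated constant. The densest-sub-interval pigeonhole, together with the freedom to move the comparison scale slightly above $\underline a_n$, is the idea your sketch is missing.
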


\begin{proof}
First of all note that since $\underline{a}_n\leq\overline{a}_n$, there is nothing to prove in the trivial cases $\underline{a}_n=A_n$ or $\overline{a}_n=K_0$. Hence $h_n(\overline{a}_n,f_0)\leq b$ and $g_n(a,f_0)< B\log n$, for all $a>\underline{a}_n$, hold. Furthermore assume that $\underline{a}_n\leq \rho^{-2}\overline{a}_n$, else the statement is trivial.

Let us divide the interval $[\rho^j,\rho^{j+1})$ into sub-intervals $[\rho^{j+k/\lceil\log n\rceil}, \rho^{j+(k+1)/\lceil\log n\rceil})$, $k=0,1,...\lceil\log n\rceil-1$, and introduce the notation 
$$ k_j=\argmax_{k=0,...,\lceil\log n\rceil-1} \vartheta_{j,k},\quad\text{where } \vartheta_{j,k}=\sum_{i=\rho^{j+k/\lceil\log n\rceil}}^{\rho^{j+(k+1)/\lceil\log n\rceil}}f_{0,i}^2,$$
with the notational convenience $\sum_{i=a}^b c_i=\sum_{i=\lceil a\rceil}^{\lfloor b\rfloor}c_i$, applied later on as well.

Then by the polished tail condition
$$ \sum_{i=\rho^{j}}^{\infty}f_{0,i}^2\leq L_0 \sum_{i=\rho^{j}}^{\rho^{j+1}}f_{0,i}^2\leq L_0 \log (n) \vartheta_{j,k_j},$$
for $j\geq \log_\rho N_0$. Note that for every $a>0$ there exists an $\tilde{a}\in(a,\rho^2 a)$ such that 
\begin{align}
I_{\tilde{a}}\equiv \tilde{a}\log(n/\tilde{a})\in\big[ \rho^{j+k_j/ \lceil\log n\rceil},\rho^{j+(k_j+1)/\lceil\log n\rceil} \big)\label{def: tilde:a}
\end{align}
for some $j\in\mathbb{N}$ and let us denote this $j$ by $J_{\tilde{a}}$. Then
$$\sum_{i=e^{-1/\log n}I_{\tilde{a}}}^{e^{1/\log n}I_{\tilde{a}}}f_{0,i}^2\geq\vartheta_{J_{\tilde{a}},k_{J_{\tilde{a}}}}.$$
Let us take any $a_1\leq \rho^{-2} a_2$ and denote by $\tilde{a}_1\in (a_1,\rho^2 a_1)$ the value satisfying \eqref{def: tilde:a}. Then by the previous inequalities and noting that $\exp\{ e^{1/\log n}\log(n/a)\}\leq \exp\{(1+2/\log n)\log(n/a)\}\leq e^2n/a$, for $n\geq e$, using that $e^x\leq 1+2x$ for $x\in[0,1]$,
\begin{align*}
\frac{h_n(a_2,f_0)}{h_n(\tilde{a}_1,f_0)}&\leq \frac{\tilde{a}_1\log^2(\frac{n}{\tilde{a}_1})}{a_2\log^2(\frac{n}{a_2})}4e^2\frac{\sum_{i=1}^{I_{\tilde{a}_1}} ie^{i/a_2}f_{0,i}^2+\sum_{i= I_{\tilde{a}_1}}^{I_{a_2}}  ie^{i/a_2}f_{0,i}^2+\frac{n^2}{a_2^2}\sum_{i= I_{a_2}}^{\infty}ie^{-i/a_2}f_{0,i}^2}{\sum_{i=1}^{e^{1/\log n} I_{\tilde{a}_1}} ie^{i/\tilde{a}_1}f_{0,i}^2}\\
&\leq \frac{\tilde{a}_1\log^2(\frac{n}{\tilde{a}_1})}{a_2\log^2(\frac{n}{a_2})}4e^2\Big(1+\frac{\sum_{i=I_{\tilde{a}_1}}^{I_{a_2}} ie^{i/a_2} f_{0,i}^2+n\log(\frac{n}{a_2})\sum_{i= I_{a_2}}^{\infty}f_{0,i}^2}{\sum_{i=e^{-1/\log n}I_{\tilde{a}_1}}^{e^{1/\log n}I_{\tilde{a}_1}} ie^{i/\tilde{a}_1}f_{0,i}^2}\Big).
\end{align*}
Since $ie^{i/\tilde{a}_1} > e^{-2}n\log(n/\tilde{a}_1)$ for $i\geq e^{-1/\log n}I_{\tilde{a}_1}$, and $ie^{i/a_2} \leq n\log(n/a_2)$ for $i\leq I_{a_2}$, we can see that
$$\frac{\sum_{i=I_{\tilde{a}_1}}^{I_{a_2}} ie^{i/a_2} f_{0,i}^2+n\log(\frac{n}{a_2})\sum_{i= I_{a_2}}^{\infty}f_{0,i}^2}{\sum_{i=e^{-1/\log n}I_{\tilde{a}_1}}^{e^{1/\log n} I_{\tilde{a}_1}} ie^{i/\tilde{a}_1}f_{0,i}^2}\leq e^{2}\frac{\log(\frac{n}{a_2})}{\log(\frac{n}{\tilde{a}_1})}\frac{\sum_{i=I_{\tilde{a}_1}}^{\infty}f_{0,i}^2}{\sum_{i=e^{-1/\log n}I_{\tilde{a}_1}}^{e^{1/\log n}I_{\tilde{a}_1}} f_{0,i}^2}.$$
Moreover, since 
$$\sum_{i=I_{\tilde{a}_1}}^{\infty}f_{0,i}^2\leq L_0\log(n)\vartheta_{J_{\tilde{a}_1},k_{J_{\tilde{a}_1}}}\leq L_0\log(n)\sum_{i=e^{-1/\log n}I_{\tilde{a}_1}}^{e^{1/\log n}I_{\tilde{a}_1}}f_{0,i}^2,$$
combined with the preceding computations we get that 
\begin{align}
 \frac{h_n(a_2,f_0)}{h_n(\tilde{a}_1,f_0)}\leq 4e\frac{\tilde{a}_1\log^2(\frac{n}{\tilde{a}_1})}{a_2\log^2(\frac{n}{a_2})}\Big(1+L_0e^{2}\log(n)\frac{\log(\frac{n}{a_2})}{\log(\frac{n}{\tilde{a}_1})}\Big).\label{eq: help:dist:bounds}
\end{align}
Furthermore, let us note that for any $\underline{a}_n<a\leq A_n$
$$h_n(a,f_0)\leq 2g_n(a,f_0)+\frac{2e^2}{\log^2(\frac{n}{a})}\sum\limits_{i=1}^{2a}f^2_{0,i}\leq 2B\log(n)+o(1).$$
 Then by taking $a_1=\underline{a}_n$, $\tilde{a}_1\in(\underline{a}_n,\rho^2\underline{a}_n)$, and $a_2=\overline{a}_n$ in $\eqref{eq: help:dist:bounds}$ we get that
\begin{align*}
\frac{b}{2B\log (n)+o(1)} \leq \frac{h_n(\overline{a}_n,f_0)}{h_n(\tilde{a}_1,f_0)}&\leq 4e^4(1+o(1))L_0\log (n) \frac{\tilde{a}_1\log(\frac{n}{\tilde{a}_1})}{\overline{a}_n\log(\frac{n}{\overline{a}_n})}\\
& \leq 4e^4\rho^2(1+o(1))L_0\log (n) \frac{\underline{a}_n\log(\frac{n}{\underline{a}_n})}{\overline{a}_n\log(\frac{n}{\overline{a}_n})}.
\end{align*}
After rearranging the preceding inequality we arrive to our statement.
\end{proof}

\subsection{Contraction rates}\label{sec: contraction}
In this section we provide the contraction rate results both for the empirical and hierarchical Bayes procedures.First we show that the empirical Bayes method achieves the (up to a logarithmic factor) optimal minimax contraction rate around the truth for unknown regularity hyper-parameter $\beta>0$.

\begin{theorem}\label{thm: eb:contraction}
The maximum marginal likelihood empirical Bayes posterior corresponding to the prior \eqref{def: prior} achieves the minimax adaptive contraction rate (up to a logarithmic factor), i.e. for given $M,\beta_0>0$ we have
\begin{align}
\label{eq: eb:contraction expect}
\sup_{\beta\geq \beta_0}\sup\limits_{f\in\Theta^{\beta}(M)}E_0[\Pi_{\hat{a}_n}(\|f-f_0\|_2\geq M_n (n/\log^2n)^{-\beta/(1+2\beta)}|Y)]\to 0,
\end{align}
for any sequence $M_n$ tending to infinity.
\end{theorem}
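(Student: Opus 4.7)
The plan is to combine the deterministic upper bound on the MMLE from Theorem~\ref{th: consistency a} and Proposition~\ref{prop: bound a} with an explicit bias--variance decomposition of the posterior squared risk, exploiting the closed-form Gaussian posterior~\eqref{eq: posterior}. By Markov's inequality
\[
\Pi_{\hat a_n}\bigl(\|f-f_0\|_2\ge M_n r_{n,\beta}\,\mid\,Y\bigr)\le\frac{E_{\Pi_{\hat a_n}}[\|f-f_0\|_2^2\mid Y]}{M_n^2 r_{n,\beta}^2},\qquad r_{n,\beta}=(n/\log^2 n)^{-\beta/(1+2\beta)},
\]
so, using that $\hat a_n\in[\underline a_n,\overline a_n]$ with $P_0$-probability tending to $1$ by Theorem~\ref{th: consistency a}, it suffices to control $\sup_{a\in[1,\overline a_n]} E_0 E_{\Pi_a}[\|f-f_0\|_2^2\mid Y]$ by a constant multiple of $r_{n,\beta}^2$ uniformly in $f_0\in\Theta^\beta(M)$ and $\beta\ge\beta_0$.

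Substituting $Y_i=f_{0,i}+Z_i/\sqrt n$ with $Z_i\sim N(0,1)$ into the conjugate normal posterior~\eqref{eq: posterior} yields the decomposition
\[
E_0 E_{\Pi_a}[\|f-f_0\|_2^2\mid Y]= B(a)+V(a),\quad B(a)=\sum_i\frac{(ae^{i/a})^2 f_{0,i}^2}{(ae^{i/a}+n)^2},\quad V(a)=\sum_i\frac{n}{(ae^{i/a}+n)^2}+\sum_i\frac{1}{ae^{i/a}+n}.
\]
Splitting both sums at the threshold $i^\ast(a)=a\log(n/a)$ where $ae^{i^\ast/a}\asymp n$, each variance summand is of order $1/n$ for $i\le i^\ast$ and decays geometrically in $e^{-1/a}$ for $i>i^\ast$, giving $V(a)\lesssim a\log(n/a)/n$. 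Since $a\mapsto a\log(n/a)$ is increasing on $[1,n/e]$ and Proposition~\ref{prop: bound a} gives $\overline a_n\le C_{\beta,M}\,n^{1/(1+2\beta)}(\log n)^{-1-1/(1+2\beta)}$, we get $V(a)\lesssim n^{-2\beta/(1+2\beta)}(\log n)^{-1/(1+2\beta)}=o(r_{n,\beta}^2)$ uniformly on $a\in[1,\overline a_n]$.

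For the bias I would split at the same threshold: using $(ae^{i/a})/(ae^{i/a}+n)\le ae^{i/a}/n$ on $\{i\le i^\ast\}$ combined with $f_{0,i}^2\le Mi^{-1-2\beta}$ gives a geometric sum dominated by its largest term of order $M(a\log(n/a))^{-1-2\beta}$; bounding the shrinkage factor by $1$ on $\{i>i^\ast\}$ and summing the regularity tail gives $\lesssim M(a\log(n/a))^{-2\beta}$. Thus $B(a)\lesssim M(a\log(n/a))^{-2\beta}$, and evaluating at $a=\overline a_n$ via Proposition~\ref{prop: bound a} yields $B(\overline a_n)\lesssim n^{-2\beta/(1+2\beta)}(\log n)^{2\beta/(1+2\beta)}=o(r_{n,\beta}^2)$.

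The main technical obstacle is making the bias bound uniform for $a\in[1,\overline a_n]$: the naive bound $M(a\log(n/a))^{-2\beta}$ is decreasing in $a$ on $[1,n/e]$, so it degrades precisely when $\hat a_n$ is far below $\overline a_n$. I would resolve this by exploiting the MMLE's defining first-order condition at $\hat a_n$, encoded through the functions $h_n$ and $g_n$ in~\eqref{eq: h}--\eqref{eq: g}: a small value of $\hat a_n$ forces the high-frequency coefficients of $f_0$ that would drive $h_n$ and $g_n$ upward to be small, so the crude $f_{0,i}^2\le Mi^{-1-2\beta}$ bound on the tail is loose in exactly the regime where it threatens to fail. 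Putting the bias and variance estimates together and dividing by $M_n^2 r_{n,\beta}^2$ then gives the required vanishing upper bound, with constants depending only on $\beta_0$ and $M$, hence uniform in $\beta\ge\beta_0$.
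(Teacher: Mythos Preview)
Your reduction via Markov's inequality and the bias--variance split of the posterior risk $R_n(a)=E_{\Pi_a}[\|f-f_0\|_2^2\mid Y]$ matches the paper's Section~\ref{sec: eb:contraction}. However, there is a genuine gap in the step where you claim that ``it suffices to control $\sup_{a\in[1,\overline a_n]} E_0 E_{\Pi_a}[\|f-f_0\|_2^2\mid Y]$''. Since $\hat a_n$ is a measurable function of $Y$, the quantity you must bound is $E_0 R_n(\hat a_n)$, which on the event $\{\hat a_n\in[\underline a_n,\overline a_n]\}$ is dominated by $E_0\bigl[\sup_{a\in[\underline a_n,\overline a_n]}R_n(a)\bigr]$, not by $\sup_a E_0 R_n(a)$. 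The two differ by the centered process $\sup_a|R_n(a)-E_0 R_n(a)|$, which is nontrivial because the squared-error part of $R_n(a)$ involves the data. The paper treats this separately in assertion~\eqref{eq: posterior risk}, writing $R_n(a)-E_0R_n(a)=\mathbb V(a)/n-2\mathbb W(a)/\sqrt n$ and controlling each supremum by chaining (Corollary~2.2.5 of \cite{vdv:wellner:96}) together with the metric entropy bounds of Lemma~\ref{lem: bound v w}. Your proposal omits this entirely.

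On the bias, your diagnosis is correct: the crude bound $B(a)\lesssim M(a\log(n/a))^{-2\beta}$ is useless for small $a$, and the remedy does come from the score conditions. But the paper's implementation is more concrete than ``a small $\hat a_n$ forces high-frequency coefficients to be small''. It works on the interval $[\underline a_n,\overline a_n]$ and uses directly that $g_n(a,f_0)<B\log n$ for every $a>\underline a_n$. The range $2a\le i\le I_a$ is then handled by comparing the bias summand to the $g_n$ summand. The remaining range $I_{\underline a_n}\le i\le (n/\log^2 n)^{1/(1+2\beta)}$ is not covered by a single application of this idea; the paper partitions it into geometric blocks $[n_j,n_{j+1})$ with $n_j=(1+1/\log n)^j I_{\underline a_n}$, associates to each block an auxiliary scale $b_j$ satisfying $b_je^{n_j/b_j}=n$, and bounds each block by $g_n(b_j,f_0)\le B\log n$ before summing the geometric series in $j$; see \eqref{eq: risk:help1}. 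Your last paragraph gestures at the right mechanism but does not supply this construction, and without it the uniform bias bound over $a\in[\underline a_n,\overline a_n]$ does not close.
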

\begin{proof}
See Section \ref{sec: eb:contraction}.
\end{proof}

Using our findings on the empirical Bayes method we can extend the results on the hierarchical Bayes method, derived in the literature (where typically inverse gamma hyper-prior was considered), by allowing other, more general choices of the hyper-prior distribution as well. 
\begin{theorem}\label{thm: hb:contraction}
Let us assume that the hyper-prior $\pi$ satisfies Assumption \ref{assump: HB}. Then the corresponding hierarchical Bayes posterior achieves the minimax contraction rate (up to a logarithmic factor), i.e.  for given $\beta_0,M>0$ we have
\begin{align}
\sup_{\beta\geq\beta_0}\sup\limits_{f\in\Theta^{\beta}(M)}E_0[\Pi(\|f-f_0\|_2\geq M_n (n/\log^2n)^{-\beta/(1+2\beta)}|Y)]\to 0,
\end{align}
for some arbitrary sequence $M_n$ tending to infinity.
\end{theorem}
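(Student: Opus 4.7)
The plan is to reduce the problem to the deterministic-$a$ analysis that underlies the proof of Theorem \ref{thm: eb:contraction}, exploiting the fact that the hierarchical posterior is the mixture
$$ \Pi\bigl(B\,\bigm|\,Y\bigr)=\frac{\int_1^{A_n}\Pi_a(B|Y)\,e^{\ell_n(a)}\pi(a)\,da}{\int_1^{A_n}e^{\ell_n(a)}\pi(a)\,da} $$
for any measurable $B\subset\ell_2$, where $\pi(a|Y)\propto e^{\ell_n(a)}\pi(a)$ is the marginal posterior on the hyper-parameter. Taking $B=\{f:\|f-f_0\|_2\ge M_n\varepsilon_{n,\beta}\}$ with $\varepsilon_{n,\beta}=(n/\log^2 n)^{-\beta/(1+2\beta)}$, it suffices to show (a) that $\Pi_a(B|Y)\to 0$ in $P_0$-probability uniformly over a sufficiently rich set $J_n$ of values of $a$, and (b) that the random distribution $\pi(\cdot|Y)$ concentrates on $J_n$.

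First, fix a constant $c_0>1$ and set $J_n:=[\underline{a}_n/c_0,\,c_0\overline{a}_n]\cap[1,A_n]$. I would prove the conditional contraction statement
$$ \sup_{\beta\ge\beta_0}\sup_{f_0\in\Theta^\beta(M)}\sup_{a\in J_n}E_0\,\Pi_a\bigl(\|f-f_0\|_2\ge M_n\varepsilon_{n,\beta}\,\bigm|\,Y\bigr)\to 0. $$
This is the bias/variance computation that already appears in the proof of Theorem \ref{thm: eb:contraction}: the posterior variance is $\sum_i(ae^{i/a}+n)^{-1}$ and the squared bias equals $\sum_i(ae^{i/a})^2f_{0,i}^2/(ae^{i/a}+n)^2$, and for $f_0\in\Theta^\beta(M)$ both are controlled by $\varepsilon_{n,\beta}^2$ (up to multiplicative $M_n$) whenever $a$ lies in the polynomial neighborhood of the deterministic bounds given by Proposition \ref{prop: bound a}, which $J_n$ supplies.

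Second, and this is the main technical step, I would show that the marginal posterior puts negligible mass outside $J_n$:
$$ \frac{\int_{[1,A_n]\setminus J_n}e^{\ell_n(a)}\pi(a)\,da}{\int_1^{A_n}e^{\ell_n(a)}\pi(a)\,da}\to 0\qquad\text{in }P_0\text{-probability.} $$
For the numerator I would bound $\ell_n(a)-\ell_n(\hat{a}_n)\le -\kappa_n$ with $\kappa_n\to\infty$ whenever $a\notin J_n$, using the same sign analysis of the score $\ell_n'(a)$, expressed through the functions $h_n$ and $g_n$ from \eqref{eq: h}--\eqref{eq: g}, that produces the deterministic sandwiching in Theorem \ref{th: consistency a}: for $a\ge c_0\overline{a}_n$ the score is uniformly negative, while for $a\le \underline{a}_n/c_0$ it is uniformly positive with a logarithmic margin, and a concentration bound for the quadratic functional $\sum_i n^2Y_i^2/(ae^{i/a}+n)$ converts these into a uniform exponential decrease of $e^{\ell_n(a)}$. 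For the denominator, a fixed neighborhood of $\hat{a}_n\in J_n$ on which $\ell_n$ drops by at most a constant contributes at least $e^{\ell_n(\hat{a}_n)}$ times the $\pi$-mass of that neighborhood, and Assumption \ref{assump: HB} forces this mass to decay only polynomially in $n$.

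The main obstacle will be this exponential separation of $\ell_n$ outside $J_n$: the two tails (oversmoothing $a\ll\underline{a}_n$ and undersmoothing $a\gg\overline{a}_n$) must be handled by different arguments, and in each case one needs uniform-in-$a$ concentration for a weighted $\chi^2$-type statistic rather than pointwise concentration. Once the exponential gap dominates the polynomial prior factors allowed by Assumption \ref{assump: HB}, the two steps combine through the mixture formula to give $E_0\Pi(B|Y)\le \sup_{a\in J_n}E_0\Pi_a(B|Y)+E_0[\pi(J_n^c|Y)]\to 0$ uniformly in $\beta\ge\beta_0$ and $f_0\in\Theta^\beta(M)$, which is the claimed adaptive contraction rate.
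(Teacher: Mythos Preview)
Your strategy is exactly the paper's: decompose the hierarchical posterior as a mixture, show the hyper-posterior $\pi(a|Y)$ concentrates on an interval around $[\underline a_n,\overline a_n]$ (this is Lemma~\ref{lem: HB}), and then invoke the uniform-in-$a$ risk bounds from the empirical Bayes proof on that interval. The paper even states the final inequality in the same form, $E_0\Pi(B|Y)\le E_0\sup_{a\in\mathcal I_n}\Pi_a(B|Y)+E_0\Pi(a\notin\mathcal I_n|Y)$.

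One point in your sketch is not quite right and would cause trouble if you tried to carry it out as written. You assert that for $a\le\underline a_n/c_0$ the score $\ell_n'(a)$ is ``uniformly positive with a logarithmic margin''. The paper does \emph{not} establish this, and the variance of the centered score $\mathbb H_n(a)/\log^2(n/a)$ scales like $g_n(a,f_0)/(a\log(n/a))$, which is not uniformly controllable over all of $[1,\underline a_n]$. What the paper actually proves (see the summary at the start of Section~\ref{sec: hb} and the argument in Section~\ref{sec: mmle:lb}) is a two-region bound: $\mathbb M_n(a)/\log^2(n/a)\ge -K_3$ on all of $[1,\underline a_n]$, and $\mathbb M_n(a)/\log^2(n/a)\ge K_2\log(n/\underline a_n)$ only on the narrow window $[\underline a_n^*,\underline a_n]$ with $\underline a_n^*=\underline a_n\log n/(1+\log n)$. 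The large positive contribution on this short interval (length $\asymp\underline a_n/\log n$, integrand $\asymp\log^3 n$) dominates the possibly negative contribution elsewhere (length $\le\underline a_n$, integrand $\ge -K_3\log^2 n$), provided $B$ in the definition of $\underline a_n$ is chosen large enough. This is what yields $\ell_n(\underline a_n)-\ell_n(a)\gtrsim\underline a_n\log^2 n$ for $a<\underline a_n^*$ and hence the exponential separation you need against the polynomial prior factors from Assumption~\ref{assump: HB}. Replace your ``uniformly positive score'' claim with this integrated-score argument and the rest of your outline goes through.
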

\begin{proof}
See Section \ref{sec: hb:contraction}.
\end{proof}
The above results are of interest in their own right, but our main focus lies on the reliability of Bayesian uncertainty quantification resulting both from the hierarchical and the empirical Bayes procedures, hence we have deferred the contraction rate results to the appendix.

\subsection{Proof of Theorem \ref{thm: eb:contraction}}\label{sec: eb:contraction}
Let us introduce the shorthand notation $\varepsilon_n:=n^{-\beta/(1+2\beta)}(\log n)^{2\beta/(1+2\beta)}$. In view of Markov's inequality and Theorem \ref{th: consistency a}, for every $\beta>0$
\begin{align}
\label{eq : contraction mark ineq}
\sup\limits_{f_0\in\Theta^{\beta}(M)}E_0[\Pi_{\hat{a}_n}(\|f-f_0\|_2\geq M_n\varepsilon_n|Y)]\leq\frac{1}{M_n^2\varepsilon_n^2}\sup\limits_{f_0\in\Theta^{\beta}(M)}E_0[\sup\limits_{a\in[\underline{a}_n,\overline{a}_n]}R_n(a)]+o(1),
\end{align}
where
$$ R_n(a)=\int \|f-f_0\|_2^2\Pi_a(df|Y) $$
is the posterior risk. We show below that both
\begin{align}\label{eq: bound risk}
\sup\limits_{f_0\in\Theta^{\beta}(M)}\sup\limits_{a\in[\underline{a}_n,\overline{a}_n]}E_0[R_n(a)]=O(\varepsilon_n^2)\qquad \text{ and}\\
\label{eq: posterior risk}
\sup\limits_{f_0\in\Theta^{\beta}(M)}E_0[\sup\limits_{a\in[\underline{a}_n,\overline{a}_n]}|R_n(a)-E_0(R_n(a))|]=o(\varepsilon_n^2)
\end{align}
hold, which results in that the right-handside of (\ref{eq : contraction mark ineq}) vanishes as $n\to\infty$, concluding the proof of Theorem \ref{thm: eb:contraction}.

\subsubsection{Bound for the expected posterior risk \eqref{eq: bound risk}}
First, note that by elementary computations
\begin{align*}
 R_n(a)=\sum\limits_{i=1}^{\infty}(\hat{f}_{a,i}-f_{0,i})^2+\sum\limits_{i=1}^{\infty}\frac{1}{ae^{i/a}+n},
\end{align*}
where $\hat{f}_{a,i}=n(ae^{i/a}+n)^{-1}Y_i$ is the $i$th coefficient of the posterior mean. Therefore the expectation of $R_n(a)$ is given by
\begin{align}
E_0  R_n(a)= \sum\limits_{i=1}^{\infty}\frac{a^2e^{2i/a}}{(ae^{i/a}+n)^2}f_{0,i}^2+\sum\limits_{i=1}^{\infty}\frac{n}{(ae^{i/a}+n)^2}+\sum\limits_{i=1}^{\infty}\frac{1}{ae^{i/a}+n}. \label{eq:help:th1_1}
\end{align}
Note that the second and third terms do not contain $f_0$, and that the second term is bounded by the third. By Lemma \ref{lem: bound sum} (with $r=0$ and $l=1$) and Proposition \ref{prop: bound a} the latter is further bounded for $a\leq\overline{a}_n$  by a multiple of
$$ \frac{a}{n}\log\big(\frac{n}{a}\big)\leq\frac{\overline{a}_n}{n}\log\big(\frac{n}{\overline{a}_n}\big)\lesssim n^{-2\beta/(1+2\beta)}(\log n)^{-1/(1+2\beta)},$$
since the function $a\mapsto a\log (n/a)$ is monotone increasing for $a\leq n/e$. It remained to deal with the first term on the right hand side of \eqref{eq:help:th1_1}, which we divide into three parts and show that each of the parts have the stated order. First note that for $f_0\in\Theta^{\beta}(M)$
$$ \sum_{i=(n/\log^2 n)^{1/(1+2\beta)}}^{\infty}\frac{a^2e^{2i/a}}{(ae^{i/a}+n)^2}f_{0,i}^2\leq\sum_{i=(n/\log^2 n)^{1/(1+2\beta)}}^{\infty}Mi^{-1-2\beta}\leq\frac{M}{2\beta}(n/\log^2 n)^{-2\beta/(1+2\beta)}.$$
Next note that for $a\leq\overline{a}_n$, in view of Proposition \ref{prop: bound a},
$$ \sum_{i=1}^{2a}\frac{a^2e^{2i/a}}{(ae^{i/a}+n)^2}f_{0,i}^2\leq\sum_{i=1}^{2a}\frac{a^2e^{2i/a}}{n^2}f_{0,i}^2\leq\frac{a^2e^4}{n^2}\sum_{i=1}^{2a}f_{0,i}^2\lesssim\frac{\overline{a}_n^2}{n^2}\lesssim n^{-\frac{4\beta}{1+2\beta}}(\log n)^{-2-\frac{2}{1+2\beta}}. $$
Furthermore, notice that the maximum of the function $i\mapsto e^{i/a}/(i-a)$ over $[2a,I_a]$ is attained at $i=I_a$, because the function is increasing for $i>2a$ and $n>0$. Besides, for $a>\underline{a}_n$ we have $g_n(a,f_0)< B\log n$, hence for any $\underline{a}_n< a\leq\bar{a}_n$
\begin{align*}
\sum_{i=2a}^{I_a}\frac{a^2e^{2i/a}}{(ae^{i/a}+n)^2}f_{0,i}^2&\leq\frac{a}{n}\frac{\log^2(n/a)}{(\log(n/a)-1)}\sum_{i=2a}^{I_a}\frac{n^2e^{i/a}(i-a)}{a\log^2(n/a)(ae^{i/a}+n)^2}f_{0,i}^2\\
&\lesssim \overline{a}_nn^{-1}\log(n/\overline{a}_n)g_n(a,f_0)\\
&\leq \overline{a}_n n^{-1}\log^2n\lesssim n^{-2\beta/(1+2\beta)}(\log n)^{2\beta/(1+2\beta)},
\end{align*}
where the last inequality follows from Proposition \ref{prop: bound a}.

It remained to deal with the terms between $I_{\underline{a}_n}=\underline{a}_n\log(n/\underline{a}_n)$ and $(n/\log^2 n)^{1/(1+2\beta)}$. Let $J=J(n)$ be the smallest integer such that 
$$\big(1+\frac{1}{\log n}\big)^J\underline{a}_n\log\big(\frac{n}{\underline{a}_n}\big)\geq (n/\log^2 n)^{1/(1+2\beta)}$$
and let
$$ n_j:=\big(1+\frac{1}{\log n}\big)^j I_{\underline{a}_n}.$$
Note that the sequence $n_j$ is increasing. For notational convenience, we also introduce $b_j$ such that $b_j e^{n_j/b_j}=n$ and $b_j<n_j$. Now we have for any $a\geq 1$
\begin{align}
\sum_{i=I_{\underline{a}_n}}^{(n/\log^2 n)^{1/(1+2\beta)}}\frac{a^2e^{2i/a}}{(ae^{i/a}+n)^2}f_{0,i}^2&\leq\sum_{j=0}^{J-1}\sum_{i=n_j}^{n_{j+1}}f_{0,i}^2\nonumber\\
&\leq4e^2\sum_{j=0}^{J-1}\sum_{i=n_j}^{n_{j+1}}\frac{nb_je^{i/b_j}}{(b_{j}e^{i/b_{j}}+n)^2}f_{0,i}^2.\label{eq: risk:help1}
\end{align}
By elementary computations we get that $b_j\asymp n_j/\log n_j$, therefore $\eqref{eq: risk:help1}$ is further bounded by constant times
$$ \frac{1}{n}\sum_{j=0}^{J-1}\frac{1}{\log n_j}\sum_{i=n_j}^{n_{j+1}}\frac{n^2(i-b_j)e^{i/b_j}}{(b_je^{i/b_j}+n)^2}f_{0,i}^2\leq\frac{1}{n}\sum_{j=0}^{J-1}\frac{b_j\log^2 n}{\log n_j}g_n(b_j,f_0).$$
Since $b_j\geq\underline{a}_n$ we have $g_n(b_j,f_0)\leq B\log n$ for all $j\geq 0$. Then by the sum of geometric series we get that
\begin{align*}
\frac{1}{n}\sum_{j=0}^{J-1}\frac{n_j}{\log^2 n_j}\log^3 n
\lesssim\frac{\log n}{n}\frac{I_{\underline{a}_n}(1+1/\log n)^{J}}{1/\log n} \leq n^{-2\beta/(1+2\beta)}(\log n)^{2-2/(1+2\beta)},
\end{align*}
concluding the proof of assertion \eqref{eq: bound risk}.

\subsubsection{Bound for the centered posterior risk \eqref{eq: posterior risk}}
Note that 
\begin{align*}
&R_n(a)-E_0 R_n(a)=\mathbb{V}(a)/n-2\mathbb{W}(a)/\sqrt{n}, \quad\text{where}\\
&\mathbb{V}(a)=n^2\sum\limits_{i=1}^{\infty}\frac{1}{(ae^{i/a}+n)^2}(Z_i^2-1) \mbox{, and } \mathbb{W}(a)=n\sum\limits_{i=1}^{\infty}\frac{ae^{i/a}f_{0,i}}{(ae^{i/a}+n)^2}Z_i.
\end{align*}
Therefore it is sufficient to show that
\begin{align*}
E_0\big(\sup\limits_{a\in[\underline{a}_n,\overline{a}_n]}|\mathbb{V}(a)|/n\big)&\lesssim n^{-2\beta/(1+2\beta)}(\log n)^{-1/(1+2\beta)},\\ 
E_0\big(\sup\limits_{a\in[\underline{a}_n,\overline{a}_n]} |\mathbb{W}(a)/\sqrt{n}|\big)&\lesssim n^{-2\beta/(1+2\beta)}.
\end{align*}
We deal with the two processes above, separately.

For the process $\mathbb{V}$, Corollary 2.2.5 in \cite{vdv:wellner:96} implies that
$$ E_0[\sup\limits_{a\in[\underline{a}_n,\overline{a}_n]}|\mathbb{V}(a)|]\lesssim\sup\limits_{a\in[\underline{a}_n,\overline{a}_n]}\sqrt{V_0(\mathbb{V}(a))}+\int\limits_0^{diam_n}\sqrt{N(\varepsilon,[\underline{a}_n,\overline{a}_n],d_n)}d\varepsilon,$$
where $d_n^2(a_1,a_2)=V_0(\mathbb{V}(a_1)-\mathbb{V}(a_2))$, $diam_n$ is the $d_n$-diameter of $[\underline{a}_n,\overline{a}_n]$ and $N(\varepsilon,B,d_n)$ the covering number of the set $B$ with $\varepsilon$-radius balls relative to the $d_n$ semi-metric. The variance of $\mathbb{V}(a)$ is equal to
$$V_0(\mathbb{V}(a))=2n^4\sum\limits_{i=1}^{\infty}\frac{1}{(ae^{i/a}+n)^4}$$
since $V(Z_i^2)=2$. Using Lemma \ref{lem: bound sum} (with $r=0$ and $l=4$) we can conclude that the variance of $\mathbb{V}(a)$ is bounded from above by a multiple of $a\log(n/a)$, hence $diam_n \lesssim \sqrt{\overline{a}_n\log n}$. In view of Lemma \ref{lem: bound v w}, the distance $d_n(a_1,a_2)$ is bounded from above by a multiple of $|a_1-a_2|\log^{3/2} n$, hence the interval $[\underline{a}_n,\overline{a}_n]$ can be covered with constant times $\overline{a}_n \varepsilon^{-1}\log^{3/2} n$ amount of $\varepsilon$-balls relative to the $d_n$ semi-metric. In view of the above computation and Proposition \ref{prop: bound a}
$$E_0[n^{-1}\sup\limits_{a\in[\underline{a}_n,\overline{a}_n]}|\mathbb{V}(a)|]\lesssim(\overline{a}_n/n)\log n\lesssim n^{-2\beta/(1+2\beta)}(\log n)^{-1/(1+2\beta)}.$$

The process $\mathbb{W}$ can be dealt with similarly to $\mathbb{V}$. The main difference is the bounding of the variance of $\mathbb{W}$, which we describe in details. First note that
$$ V_0\Bigg(\frac{\mathbb{W}(a)}{\sqrt{n}}\Bigg)=n\sum\limits_{i=1}^{\infty}\frac{a^2e^{2i/a}}{(ae^{i/a}+n)^4}f_{0,i}^2.$$
Let us split the sum at $I_a$ and by applying the inequality $ae^{i/a}+n\geq n$, we get
$$ n\sum\limits_{i=1}^{I_a}\frac{a^2e^{2i/a}}{(ae^{i/a}+n)^4}f_{0,i}^2\leq \frac{1}{n^3}\sum\limits_{i=1}^{I_a}a^2e^{2i/a}f_{0,i}^2\lesssim \frac{\|f_0\|_2^2}{n}. $$
Then by noting that the function $i\mapsto e^{i/a}/((i-a)(ae^{i/a}+n)^2)$ is decreasing on $[I_a,\infty)$, recalling that $g_n(a,f_0)\leq B\log n$, for all $a\geq \underline{a}_n$, and in view of Proposition \ref{prop: bound a}
\begin{align*}
n\sum\limits_{i= I_a}^\infty\frac{a^2e^{2i/a}}{(ae^{i/a}+n)^4}f_{0,i}^2&\leq \frac{a\log^2(n/a)}{4n^2(\log(n/a)-1)}\sum\limits_{i=I_a}^{\infty}\frac{n^2(i-a)e^{i/a}}{a\log^2(n/a)(ae^{i/a}+n)^2}f_{0,i}^2\\
&\lesssim an^{-2} \log(n/a) g_n(a,f_0)\lesssim \overline{a}_n n^{-2}\log^2 n\\
&=O( n^{-(1+4\beta)/(1+2\beta)}(\log n)^{2\beta/(1+2\beta)}),
\end{align*}
hence $\diam_n=O(n^{-\frac{1/2+2\beta}{1+2\beta}}(\log n)^{\frac{\beta}{1+2\beta}}))$. Then in view of Lemma \ref{lem: bound v w} the covering number of the interval $[\underline{a}_n,\overline{a}_n]$ is bounded by $\varepsilon^{-1}(\overline{a}_n/\sqrt{n})\log n$ with respect to the semi-metric $d_n(a_1,a_2)=V_0\big(\mathbb{W}(a_1)/\sqrt{n}-\mathbb{W}(a_2)/\sqrt{n}\big)$ and the rest of the proof goes as above.

\subsubsection{Bounds for the semimetrics associated to $\mathbb{V}$ and $\mathbb{W}$}
\begin{lemma}\label{lem: bound v w}~For any $1\leq a_1\leq a_2$ and $f_0\in\ell_2(M)$ we have
\begin{align*}
V_0\Big(\mathbb{V}(a_1)-\mathbb{V}(a_2)\Big)\lesssim (a_1-a_2)^2\log^3 n,\\
V_0\Big(\mathbb{W}(a_1)-\mathbb{W}(a_2)\Big)\lesssim (a_1-a_2)^2\log^2 n,
\end{align*}
with constants only depending on $M$.
\end{lemma}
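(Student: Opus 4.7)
The plan is to reduce the variance of each Gaussian linear combination to a weighted sum of squared increments of deterministic functions of $a$, then control those increments via their derivatives. Since the $Z_i$ are independent standard Gaussians with $V(Z_i^2)=2$, writing $\phi_i(a)=(ae^{i/a}+n)^{-2}$ and $\psi_i(a)=ae^{i/a}(ae^{i/a}+n)^{-2}$ we have
\begin{align*}
V_0\bigl(\mathbb{V}(a_1)-\mathbb{V}(a_2)\bigr) &= 2n^4 \sum_{i=1}^{\infty}\bigl(\phi_i(a_1)-\phi_i(a_2)\bigr)^2,\\
V_0\bigl(\mathbb{W}(a_1)-\mathbb{W}(a_2)\bigr) &= n^2 \sum_{i=1}^{\infty}\bigl(\psi_i(a_1)-\psi_i(a_2)\bigr)^2 f_{0,i}^2.
\end{align*}
By Cauchy--Schwarz applied to $\phi_i(a_2)-\phi_i(a_1)=\int_{a_1}^{a_2}\phi_i'(a)\,da$ (and analogously for $\psi_i$) and exchanging sum and integral, the task reduces to bounding $\sup_{a\in[a_1,a_2]}\sum_i (\phi_i'(a))^2$ and $\sup_{a\in[a_1,a_2]}\sum_i (\psi_i'(a))^2 f_{0,i}^2$.

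A direct differentiation gives $\phi_i'(a)=-2e^{i/a}(a-i)/(a(ae^{i/a}+n)^3)$ and $\psi_i'(a)=e^{i/a}(a-i)(n-ae^{i/a})/(a(ae^{i/a}+n)^3)$. The plan is to split the sums at the threshold $I_a=a\log(n/a)$, i.e.\ the index where $ae^{i/a}=n$. For $i\le I_a$ I would use $ae^{i/a}+n\ge n$ together with $e^{i/a}\le n/a$ and crude control $|a-i|\le I_a$, while for $i>I_a$ I would use $ae^{i/a}+n\ge ae^{i/a}$ to trigger the exponential decay $e^{-ki/a}$ with $k\ge 2$. Substituting $j=i/a$ converts each sum into a comparison with $\int_0^{\log(n/a)} e^{2j}(1-j)^2\,dj$ or $\int_{\log(n/a)}^{\infty}e^{-2j}(j-1)^2\,dj$, both of which are dominated by the endpoint $j=\log(n/a)$ and produce only polylogarithmic factors after cancellation. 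This yields the key pointwise bounds
\[
\sum_i(\phi_i'(a))^2\lesssim \frac{\log^2(n/a)}{a n^4},\qquad \sum_i(\psi_i'(a))^2 f_{0,i}^2\lesssim \frac{M\log^2(n/a)}{a^2 n^2},
\]
the latter using $f_0\in\ell_2(M)$ to absorb $\sum f_{0,i}^2\le M$ after first bounding $(a-i)^2e^{-2i/a}$ uniformly over $i>I_a$ by its value at $i=I_a$, namely $a^4\log^2(n/a)/n^2$.

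Multiplying by the prefactors $2n^4$ and $n^2$ respectively and by $(a_1-a_2)^2$ gives the claimed inequalities (indeed even with $\log^2 n$ in place of $\log^3 n$ on the first bound). The main obstacle is the careful bookkeeping in Step~4: the integrand $e^{2j}(1-j)^2$ is unbounded on the relevant window, so one has to verify that its integral is dominated by the right endpoint with the correct power of $\log(n/a)$, and that the analogous claim for $i>I_a$ holds because the factor $(a-i)^2$ is beaten by the exponential decay $e^{-2i/a}$ beyond $i=I_a$. The three-regime split ($i\le 2a$, $2a<i\le I_a$, $i>I_a$) is what makes these checks clean, as in each regime one of the two inequalities $ae^{i/a}+n\ge n$ or $ae^{i/a}+n\ge ae^{i/a}$ is tight.
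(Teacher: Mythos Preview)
Your approach is correct and follows the same skeleton as the paper's proof: write the variance as a sum of squared increments of $\phi_i$ (resp.\ $\psi_i$), bound $(\phi_i(a_1)-\phi_i(a_2))^2$ by $(a_1-a_2)^2\sup_a(\phi_i'(a))^2$ via Cauchy--Schwarz (the paper cites this as Lemma~C.11 of \cite{vanderpas2017}), and then estimate $\sum_i(\phi_i'(a))^2$ and $\sum_i(\psi_i'(a))^2f_{0,i}^2$.

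The only real difference is in the bookkeeping of that last estimate. The paper bounds $(i-a)^2\le i^2+a^2$ and then invokes its two auxiliary lemmas (the pointwise bound $ne^{i/a}i^m/(a^m(ae^{i/a}+n)^2)\le a^{-1}\log^m(n/a)$ and the sum bound $\sum_i e^{ir/a}/(ae^{i/a}+n)^l\lesssim n^{r-l}a^{1-r}\log(n/a)$), which costs an extra logarithm for $\mathbb{V}$ and lands on $\log^3(n/a)/a$. You instead keep $(i-a)^2$ intact and compare directly with the integrals $\int_0^{\log(n/a)}e^{2j}(j-1)^2\,dj$ and its tail analogue after splitting at $I_a$; because these integrals are dominated by the endpoint $j=\log(n/a)$ you save one logarithm and obtain $\log^2(n/a)/(an^4)$, hence $\log^2 n$ in place of the paper's $\log^3 n$ for $\mathbb{V}$. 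For $\mathbb{W}$ both routes reach the same $\log^2(n/a)/a^2$, the paper via $|\psi_i'(a)|\le (i+a)a^{-2}\psi_i(a)$ and its pointwise lemma, you via the uniform bound $|\psi_i'(a)|\lesssim \log(n/a)/(an)$ followed by $\sum f_{0,i}^2\le M$. Your exposition mixes the ``crude control $|a-i|\le I_a$'' (which would only give $\log^3$) with the integral comparison (which gives $\log^2$); it is the latter that justifies your sharper claim, so make clear you are using it for $\phi$, while for $\psi$ the uniform pointwise bound is what lets you absorb the arbitrary weights $f_{0,i}^2$.
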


\begin{proof}~The left-hand side of the first inequality is equal to
$$ n^4\sum\limits_{i=1}^{\infty}(\phi_i(a_1)-\phi_i(a_2))^2V(Z_i^2), $$
where $\phi_i(a)=(ae^{i/a}+n)^{-2}$. The square of the derivative of $\phi_i$ is given by $\phi_i'(a)^2=4\phi_i(a)^3e^{2i/a}(i-a)^2/a^2$, hence in view of Lemma \ref{Lem: Diff} the preceding display is bounded above by a multiple of 
\begin{align*}
(a_1-a_2)^2n^4\sup_{a\in[a_1,a_2]}\sum_{i=1}^{\infty}\frac{e^{2i/a}(i-a)^2}{a^2(ae^{i/a}+n)^6}&\leq (a_1-a_2)^2n^4\sup_{a\in[a_1,a_2]}\sum_{i=1}^{\infty}\frac{e^{2i/a}(i^2+a^2)}{a^2(ae^{i/a}+n)^6}\\
&\lesssim (a_1-a_2)^2\sup_{a\in[a_1,a_2]}\frac{\log(n/a)}{a}\Big(1+\log^2\big(\frac{n}{a}\big)\Big)
\end{align*}
with the help of Lemma \ref{lem: bound term} (first with $m=2$ and then with $m=0$), and Lemma \ref{lem: bound sum} (with $r=1$ and $l=4$).

We next consider the process $\mathbb{W}(a)$. The left-hand side of the second inequality in the statement of the lemma is equal to
$$ n^2\sum\limits_{i=1}^{\infty}(\phi_i(a_1)-\phi_i(a_2))^2f_{0,i}^2V_0(Z_i),$$
with $\phi_i(a)=ae^{i/a}/(ae^{i/a}+n)^2$. Note that $|\phi_i'(a)|\leq (i+a)a^{-2}\phi_i(a)$, hence in view of Lemma \ref{lem: bound term} (first with $m=2$ and then with $m=0$) and Lemma \ref{Lem: Diff} the preceding display is bounded by
\begin{align*}
 4(a_1-a_2)^2n^2\sup_{a\in[a_1,a_2]}&\frac{1}{a^2}\sum_{i=1}^{\infty}\frac{e^{2i/a}(i^2/a^2+1)}{(ae^{i/a}+n)^4}f_{0,i}^2\\
&\leq 4(a_1-a_2)^2\sup\limits_{a\in[a_1,a_2]}\frac{1}{a^2}\big(\log^2(n/a)+1\big)\|f_{0}\|_2^2, 
\end{align*}
concluding the proof of the lemma.
\end{proof}

\section{Proof of the empirical Bayes part of Theorem \ref{thm: polish tail coverage}}\label{sec: polish tail coverage}
First note that we get the empirical Bayes credible set by plugging in the estimator $\hat{a}_n$ into the credible set $\hat{C}(a)$ with fixed hyper-parameter $a$, given in \eqref{def: credible_a}. The proof of the statement is then based on the deterministic bounds for the MMLE $\hat{a}_n$ derived in Theorem \ref{th: consistency a} and their distance investigated in Lemma \ref{th: interval a polish tail}. 

Note that $f_0\in \hat{C}_n(L\log n)$ if and only if $\|f_0-\hat{f}\|_2\leq L\log (n) r_{\alpha}$. Therefore by triangle inequality it is sufficient to verify that 
\begin{align}
\|W(\hat{a}_n)\|_2 \leq L\log (n) r_{\alpha}(\hat{a}_n)-\|B(\hat{a}_n,f_0)\|_2\label{eq: help_coverage}
\end{align}
 holds with high probability, where $W(a)=\hat{f}_a-E_0\hat{f}_a$ and $B(a,f_0)=E_0\hat{f}_a-f_0$ are the centered posterior mean and the bias of the posterior mean for fixed hyperparameter $a>0$, respectively. Note that the $i$th coefficient of these vectors take the form
\begin{align*}
W_i(a) = \frac{n(Y_i-f_{0,i})}{ae^{i/a}+n},\quad\text{and}\quad B_i(a,f_0) = \frac{ae^{i/a}f_{0,i}}{ae^{i/a}+n}.
\end{align*}
We prove below that there exist constants $C_1,C_2,C_3>0$ such that
\begin{align}\label{eq: bound radius}
\inf\limits_{\underline{a}_n\leq a \leq\overline{a}_n}r^2_{\alpha}(a)\geq C_1\frac{\underline{a}_n}{n}\log\big(\frac{n}{\underline{a}_n}\big),\\
\label{eq: bound variance}
\inf\limits_{f_0\in\Theta_{pt}(L_0,N_0,\rho)}P_{0}(\sup\limits_{\underline{a}_n\leq a \leq\overline{a}_n}\|W(a)\|_2^2\leq C_2\frac{\underline{a}_n}{n}\log\big(\frac{n}{\underline{a}_n}\big)\log^2n)\to 1,\\
\label{eq: bound bias}
\sup\limits_{\underline{a}_n\leq a \leq\underline{a}_n}\|B(a,f_0)\|_2^2\leq C_3\frac{\underline{a}_n}{n}\log^2\big(\frac{n}{\underline{a}_n}\big)\log(n).
\end{align}
Hence in view of Theorem \ref{th: consistency a} assertion $\eqref{eq: help_coverage}$ holds with probability tending to one for large enough choice of $L$, under the polished tail assumption.

Proof of (\ref{eq: bound radius}): The radius $r_{\alpha}(a)$, given in  \eqref{def: credible_a}, is defined as $P(U_n(a) <r_{\alpha}^2(a))=1-\alpha$ with $U_n(a) := \sum\limits_{i=1}^{\infty}\frac{1}{ae^{i/a}+n}Z_i^2$, where $Z_i$'s are iid $N(0,1)$. We show below that
\begin{align}\label{eq: bound expectation radius}
\liminf\limits_{n\to\infty}\inf\limits_{a\in[\underline{a}_n,\overline{a}_n]}E\Big[\frac{n U_n(a)}{a\log(n/a)}\Big]>\frac{1}{2},\\
\label{eq: bound randomness radius}
E\Bigg[\sup\limits_{a\in[\underline{a}_n,\overline{a}_n]}\frac{n|U_n(a)-E[U_n(a)]|}{a\log(n/a)}\Bigg]\to 0.
\end{align}
Then by Markov's inequality with probability tending to one we have
\begin{align*}
\inf_{a\in[\underline{a}_n,\overline{a}_n]}\frac{n U_n(a)}{a\log(n/a)}>1/3,
\end{align*}
hence \eqref{eq: bound radius} follows from the definition of $r_{\alpha}(a)$.

Assertion \eqref{eq: bound expectation radius} follows as
$$E[U_n(a)]\geq\sum\limits_{i=1}^{I_a}\frac{1}{ae^{i/a}+n}\geq\frac{I_a}{2n}\geq\frac{a}{2n}\log\big(\frac{n}{a}\big).$$

To verify (\ref{eq: bound randomness radius}), it suffices by Corollary 2.2.5 in \cite{vdv:wellner:96} (applied with $\psi(x)=x^2$) to show that there exist $K_1,K_2>0$ such that for any $a\in [\underline{a}_n,\overline{a}_n]$ 
\begin{align}\label{eq: bound var radius}
V\Big(\frac{n U_n(a)}{a\log(n/a)}\Big)\leq K_1\frac{1}{a\log (n/a)},\\
\label{eq: bound entropy radius}
\int\limits_{0}^{diam_n}\sqrt{N(\varepsilon, [\underline{a}_n,\overline{a}_n],d_n)}d\varepsilon\leq  \sqrt{A_n/n}=o(1),
\end{align}
where $d_n$ is the semimetric defined by $d_n^2(a_1,a_2):= V\Big(\frac{nU_n(a_1)}{a_1\log(n/a_1)}-\frac{nU_n(a_2)}{a_2\log(n/a_2)}\Big)$, $diam_n$ is the diameter of the interval $ [\underline{a}_n,\overline{a}_n]$ relative to $d_n$ and $N(\varepsilon,S,d_n)$ is the minimal number of $d_n$-balls of radius $\varepsilon$ needed to cover the set $S$.

First note that in view of Lemma \ref{lem: bound sum} (with $r=0$ and $l=2$) we have 
\begin{align*}
V\Big( \frac{nU_n(a)}{a\log(n/a)} \Big)=\frac{2n^2}{a^2\log^2(n/a)}\sum\limits_{i=1}^{\infty}\frac{1}{(ae^{i/a}+n)^2}\lesssim \frac{1}{a\log(n/a)}.
\end{align*}
As a consequence one can see that $diam_n\lesssim(\underline{a}_n \log(n/\underline{a}_n))^{-1/2}$. By Lemma \ref{lem: bound u}, $d_n(a_1,a_2)\lesssim a_1^{-3/2}\log^{1/2}(n/a_1) n^{-1}|a_1-a_2|$, hence 
$$N(\varepsilon,[\underline{a}_n,\overline{a}_n],d_n)\lesssim \varepsilon^{-1}\log^{1/2}(n/\underline{a}_n)\underline{a}^{-3/2}_n\overline{a}_n/n.$$
 Therefore one can conclude that
\begin{align*}
 \int\limits_{0}^{diam_n}\sqrt{N(\varepsilon, [\underline{a}_n,\overline{a}_n],d_n)}d\varepsilon
=\frac{\overline{a}_n^{1/2}\log^{\frac{1}{4}}(n/\underline{a}_n)}{\underline{a}^{3/4}_n n^{1/2}}\int_0^{C (\underline{a}_n \log(\frac{n}{\underline{a}_n}))^{-\frac{1}{2}}}\varepsilon^{-\frac{1}{2}}d\varepsilon\lesssim  \sqrt{A_n/n}.
\end{align*}

Proof of (\ref{eq: bound variance}): The variable $\|W(a)\|^2_2$ is distributed as $\sum\limits_{i=1}^{\infty}\frac{n}{(ae^{i/a}+n)^2}Z_i^2$, with $Z_i\stackrel{iid}{\sim} N(0,1)$. Observe that
$$E_0[\|W(a)\|^2_2]=\sum\limits_{i=1}^{\infty}\frac{n}{(ae^{i/a}+n)^2},\mbox{ and }V_0(\|W(a)\|^2_2)=2\sum\limits_{i=1}^{\infty}\frac{n^2}{(ae^{i/a}+n)^4}.$$
Furthermore note that by applying Lemma  \ref{lem: bound sum} (with $r=0$ and $l=2$) we get
\begin{align*}
\frac{a}{n}\log\big(\frac{n}{a}\big)\lesssim  I_a\frac{n}{(ae^{I_a/a}+n)^2}\leq\sum\limits_{i=1}^{I_a}\frac{n}{(ae^{i/a}+n)^2}\leq\sum\limits_{i=1}^{\infty}\frac{n}{(ae^{i/a}+n)^2}\lesssim \frac{a}{n}\log\big(\frac{n}{a}\big),
\end{align*}
while by applying the same lemma (with $r=0$ and $l=4$) the variance is bounded above by a multiple of $an^{-2}\log(n/a)$. Then similar reasoning to the previous proof results in that
\begin{align}
\inf_{f_0\in\ell_2(M)} \big(\sup\limits_{\underline{a}_n\leq a \leq\overline{a}_n}\|W(a)\|_2^2\leq C_2 (\overline{a}_n/n)\log(n/\overline{a}_n)\big)\stackrel{P_0}{\rightarrow}1.\label{eq: ub:variance:help}
\end{align}
Then in view of Lemma \ref{th: interval a polish tail}, the right hand side of the inequality in the preceding probability statement is further bounded from above by constant times $(\underline{a}_n/n)\log(n/\underline{a}_n)\log^2 n$.

Proof of \eqref{eq: bound bias}: First note that
$$\|B(a,f_0)\|^2_2 \leq \sum\limits_{i=1}^{I_a}n^{-2}a^2e^{2i/a}f^2_{0,i}+ \sum\limits_{i=I_a}^{\infty}f^2_{0,i}.$$
To bound the first term on the right hand side, we use the inequalities $a/n\leq \log(n/a)$ for $a\leq A_n$ and $\sum_{i=1}^{\infty}f_{0,i}^2<\infty$, and furthermore note the function $i\mapsto e^{i/a}/(i-a)$ is monotone increasing on the interval $[2a,I_a]$ hence it takes its maximum at $I_a$. Therefore the first part of the bias is bounded by
\begin{align*}
\sup\limits_{\underline{a}_n\leq a \leq\overline{a}_n}\sum\limits_{i=1}^{I_a}\frac{a^2e^{2i/a}f^2_{0,i}}{n^2}&
\leq\sup\limits_{\underline{a}_n\leq a \leq\overline{a}_n}\sum\limits_{i=1}^{2a}\frac{a^2e^{2i/a}f^2_{0,i}}{n^2}+\sup\limits_{\underline{a}_n\leq a \leq\overline{a}_n}\frac{a}{n}\frac{\log^2(\frac{n}{a})}{(\log(\frac{n}{a})-1)}g_n(a,f_0)\\
&\lesssim\frac{\overline{a}_n^2}{n^2}\sum\limits_{i=1}^{2a}f^2_{0,i}+\frac{\overline{a}_n}{n}\log (n)\log(\frac{n}{\overline{a}_n})\lesssim\frac{\overline{a}_n}{n}\log (n)\log(\frac{n}{\overline{a}_n}),
\end{align*}
using the definition of $\underline{a}_n$.  Furthermore in view of the polished tail assumption we have
$$\sum_{i=I_{\underline{a}_n}}^{\infty} f^2_{0,i}\leq L_0\sum_{i=I_{\underline{a}_n}}^{\rho I_{\underline{a}_n}} f^2_{0,i}\leq   \log(\frac{n}{\underline{a}_n})\sum\limits_{i=I_{\tilde{a}_n}}^{ I_{\tilde{a}_n}+\rho \tilde{a}_n} f^2_{0,i} ,$$
for some $\tilde{a}_n\in [\underline{a}_n,\rho\underline{a}_n]$. Therefore
\begin{align*}
\sum\limits_{i=I_{\underline{a}_n}}^{\infty}f_{0,i}^2&\lesssim\log(\frac{n}{\underline{a}_n})\sum\limits_{i=I_{\tilde{a}_n}}^{I_{\tilde{a}_n}+\rho \tilde{a}_n}\frac{n^2(i-\tilde{a}_n)e^{i/\tilde{a}_n}}{\tilde{a}_n\log^2(\frac{n}{\tilde{a}_n})(\tilde{a}_ne^{i/\tilde{a}_n}+n)^2}f^2_{0,i}\frac{\tilde{a}_n}{n}\log(\frac{n}{\tilde{a}_n}),\\
&\leq  \log(\frac{n}{\underline{a}_n}) g_n(\tilde{a}_n,f_0)\frac{\tilde{a}_n}{n}\log(\frac{n}{\tilde{a}_n})
\lesssim  \log^2(\frac{n}{\underline{a}_n})\log (n)\frac{\underline{a}_n}{n}.
\end{align*}
Combining the two bounds, we see that (\ref{eq: bound bias}) holds.

\begin{lemma}\label{lem: bound u}~There exists a $K>0$ such that for any $1<a_1 < a_2$ 
\begin{align}
V\Big(\frac{U_n(a_1)}{a_1\log(n/a_1)}-\frac{U_n(a_2)}{a_2\log(n/a_2)}\Big)\leq K(a_1-a_2)^2\frac{\log(n/a_1)}{a_1^3n^2}.\label{eq: UB:metricU}
\end{align}
\end{lemma}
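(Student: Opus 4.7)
Since the $Z_i^2$ are independent with $\mathrm{Var}(Z_i^2)=2$, setting $\phi_i(a):=\{a\log(n/a)(ae^{i/a}+n)\}^{-1}$ so that $U_n(a)/(a\log(n/a))=\sum_i\phi_i(a)Z_i^2$, the left-hand side of \eqref{eq: UB:metricU} equals $2\sum_i(\phi_i(a_1)-\phi_i(a_2))^2$. The plan is then to write $\phi_i(a_1)-\phi_i(a_2)=\int_{a_2}^{a_1}\phi_i'(a)\,da$, apply the Cauchy--Schwarz inequality coordinatewise to bound $(\phi_i(a_1)-\phi_i(a_2))^2\le |a_1-a_2|\int_{a_2}^{a_1}\phi_i'(a)^2\,da$, sum in $i$ and interchange sum and integral to reach the master bound
\[
V\!\left(\frac{U_n(a_1)}{a_1\log(n/a_1)}-\frac{U_n(a_2)}{a_2\log(n/a_2)}\right)\le 2(a_1-a_2)^2\sup_{a\in[a_1,a_2]}\sum_{i=1}^\infty\phi_i'(a)^2.
\]

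To control the supremum I would compute the logarithmic derivative
\[
\frac{\phi_i'(a)}{\phi_i(a)}=-\frac{1}{a}+\frac{1}{a\log(n/a)}+\frac{e^{i/a}(i-a)}{a(ae^{i/a}+n)},
\]
which, after squaring and using $(x+y)^2\le 2x^2+2y^2$, yields the pointwise bound
\[
\phi_i'(a)^2 \lesssim \phi_i(a)^2\Big(\frac{1}{a^2}+\frac{e^{2i/a}(i-a)^2}{a^2(ae^{i/a}+n)^2}\Big).
\]
The first contribution collapses to $\{a^4\log^2(n/a)\}^{-1}\sum_i (ae^{i/a}+n)^{-2}$, which by Lemma \ref{lem: bound sum} with $(r,l)=(0,2)$ is $\lesssim 1/(a^3\log(n/a)\,n^2)$. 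The second contribution reduces, after $(i-a)^2\lesssim i^2+a^2$, to sums of the form $\sum_i i^m e^{2i/a}(ae^{i/a}+n)^{-4}$ with $m\in\{0,2\}$; these are controlled by Lemma \ref{lem: bound term} (to absorb $i^2$ into a $\log^2(n/a)$ multiplier, via the standard split at $I_a=a\log(n/a)$) followed by Lemma \ref{lem: bound sum} with $(r,l)=(2,4)$, yielding a bound of order $\log^2(n/a)/(an^2)$. Combined with the prefactor $1/(a^4\log^2(n/a))$ this gives $\lesssim 1/(a^3n^2)$.

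Since $\log(n/a)\ge 1$ for $a\le A_n=o(n)$, both contributions are at most $C\log(n/a)/(a^3n^2)$ for a universal constant $C$. The map $a\mapsto \log(n/a)/a^3$ has derivative $-(1+3\log(n/a))/a^4<0$ on $[1,n/e]$, so $\sup_{a\in[a_1,a_2]}\log(n/a)/a^3=\log(n/a_1)/a_1^3$, and inserting this into the master bound produces \eqref{eq: UB:metricU} with $K$ depending only on the constants from Lemmas \ref{lem: bound term} and \ref{lem: bound sum}. The main technical obstacle is the $e^{2i/a}(i-a)^2$ factor arising from the derivative of $1/(ae^{i/a}+n)$: the exponential growth of $e^{2i/a}$ is exactly balanced by the decay of $(ae^{i/a}+n)^{-4}$ at the critical scale $i\asymp I_a$, so one has to either split the sum at $I_a$ by hand or invoke Lemma \ref{lem: bound term} to reduce it to a case handled by Lemma \ref{lem: bound sum}; otherwise the argument is entirely analogous to the treatment of $\mathbb V$ and $\mathbb W$ in Lemma \ref{lem: bound v w}.
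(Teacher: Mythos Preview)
Your proof is correct and follows essentially the same route as the paper: write the variance as $2\sum_i(\phi_i(a_1)-\phi_i(a_2))^2$, invoke the Cauchy--Schwarz/Lemma~\ref{Lem: Diff} bound $\le 2(a_1-a_2)^2\sup_a\sum_i\phi_i'(a)^2$, compute the same logarithmic derivative, and close with Lemmas~\ref{lem: bound term} and~\ref{lem: bound sum}. One small bookkeeping slip: after using Lemma~\ref{lem: bound term} with $m=2$ to strip the factor $i^2e^{i/a}/(ae^{i/a}+n)^2$, the remaining sum is $\sum_i e^{i/a}/(ae^{i/a}+n)^2$, which is Lemma~\ref{lem: bound sum} with $(r,l)=(1,2)$ rather than $(2,4)$; this gives the $m=2$ piece as $\lesssim \log(n/a)/(a^3n^2)$ directly (so no extra $\log(n/a)$ inflation is needed), matching the paper's citation of $(r,l)=(1,2)$ and $(0,2)$.
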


\begin{proof}~
First note that
\begin{align}\label{eq: trivial bound u}
V\Big(\frac{U_n(a_1)}{a_1\log(n/a_1)}-\frac{U_n(a_2)}{a_2\log(n/a_2)}\Big)=2\sum\limits_{i=1}^{\infty}(\phi_i(a_1)-\phi_i(a_2))^2
\end{align}
with $\phi_i(a):=\frac{1}{a\log(n/a)(ae^{i/a}+n)}$. The derivative of $\phi_i(a)$ is given as $\phi'_i(a)=\phi_i(a)\Big(\frac{2(i-a)e^{i/a}}{a(ae^{i/a}+n)}+\frac{1}{a\log(n/a)}-\frac{1}{a}\Big)$, so we can see that $|\phi'_i(a)|\lesssim\big(\frac{(i+a)e^{i/a}}{a(ae^{i/a}+n)}\vee \frac{1}{a}\big)\phi_i(a)$. Thus in view of Lemma \ref{Lem: Diff} the right hand side of  \eqref{eq: trivial bound u} is bounded  by a multiple of
$$(a_1-a_2)^2\sup\limits_{a\in[a_1,a_2]}\sum\limits_{i=1}^{\infty}\Big(\frac{(i^2+a^2)e^{2i/a}}{a^2(ae^{i/a}+n)^2}\vee \frac{1}{a^2}\Big)\phi_i(a)^2.$$
Then in view of Lemma \ref{lem: bound term} (first with $m=2$ and then with $m=0$) and Lemma \ref{lem: bound sum} (first with $r=1$ and $l=2$ and second with $r=0$ and $l=2$) the preceding display is further bounded by the right hand side of \eqref{eq: UB:metricU}, finishing the proof of the statement.
\end{proof}

\section{Proof of Theorem \ref{thm: analytic coverage}}\label{sec: analytic coverage}
We use the notations introduced in Section \ref{sec: polish tail coverage}.

First recall that $f_0\in \hat{C}_n(L)$ if and only if $\|f_0-\hat{f}\|_2\leq L r_{\alpha}(\hat{a}_n)$. We show below that
\begin{align}
\label{eq: bound variance weak}
\inf\limits_{f_0\in A^{\gamma}(M)}P_{0}(\sup\limits_{\underline{a}_n\leq a \leq\overline{a}_n}\|W(a)\|_2^2\leq C_2\frac{\underline{a}_n}{n}\log\big(\frac{n}{\underline{a}_n}\big))\to 1,\\
\label{eq: bound bias weak}
\sup\limits_{\underline{a}_n\leq a \leq\underline{a}_n}\|B(a,f_0)\|_2^2\leq C_3\frac{\underline{a}_n}{n}\log\big(\frac{n}{\underline{a}_n}\big),
\end{align}
which together with \eqref{eq: bound radius} and Theorem \ref{th: consistency a} results in the statement.

The proof of assertion \eqref{eq: bound variance weak} follows by combining \eqref{eq: ub:variance:help} and the second inequality of Proposition \ref{prop: bound a}. Next note that similarly to the proof of (\ref{eq: bound bias}), we get that
\begin{align*}
\|B(a,f_0)\|_2^2\leq\sum_{i=1}^{I_a}\frac{a^2e^{2i/a}f^2_{0,i}}{n^2}+\sum_{i=I_{a}}^{\infty}f_{0,i}^2\lesssim \frac{\overline{a}_n}{n}\log\big(\frac{n}{\overline{a}_n}\big)+\sum_{i=I_{\underline{a}_n}}^{\infty}f_{0,i}^2.
\end{align*}
Furthermore
\begin{align*}
\sum\limits_{i=I_{\underline{a}_n}}^{\infty}f^2_{0,i}&=\sum\limits_{i=I_{\underline{a}_n}}^{\infty}e^{-2i\gamma}e^{2i\gamma}f^2_{0,i}
\lesssim e^{-2I_{\underline{a}_n}\gamma}
\lesssim \big(\frac{\underline{a}_n}{n}\big)^{2\underline{a}_n\gamma}\lesssim \frac{\underline{a}_n}{n} \log\big(\frac{n}{\underline{a}_n}\big)
\end{align*}
for $\gamma\geq 1/2$, finishing the proof of \eqref{eq: bound bias weak} and concluding the proof of the theorem.

\section{Proof of Theorem \ref{thm: self similar c-e} and the empirical Bayes part of Corollary \ref{cor: size}}\label{sec: self similar c-e}
In the proof we use again the notations introduced in Section \ref{sec: polish tail coverage}.

First note that $f_0\in\hat{C}_n(L_n)$ implies that $\|B(\hat{a}_n,f_0)\|_2\leq L_nr_{\alpha}(\hat{a}_n)+\|W(\hat{a}_n)\|_2$, which combined with Theorem \ref{th: consistency a} provides the upper bound
\begin{align}\label{eq: self-similar c-e}
P_{0}(f_0\in\hat{C}_n(L_n))\leq P_{0}\Big(\inf\limits_{a\leq\overline{a}_n}\|B(a,f_0)\|_2\leq L_n\sup\limits_{a\leq\overline{a}_n} r_{\alpha}(a)+\sup\limits_{a\leq\overline{a}_n}\|W(a)\|_2\Big)+o(1).
\end{align}\\
The proof of assertion (\ref{eq: bound radius}) also shows that there exists constants $C_1>0$ such that
\begin{align*}
\sup\limits_{f_0\in\Theta^{\beta}(M)}\sup\limits_{a \leq\overline{a}_n}r^2_{\alpha}(a)\leq C_1\frac{\overline{a}_n}{n}\log(\frac{n}{\overline{a}_n}).
\end{align*}
Then in view of assertion \eqref{eq: ub:variance:help} and Proposition \ref{prop: bound a}, both the squared radius $r_{\alpha}(a)^2$ and the variance term $\|W(a)\|_2^2$ are bounded by a constant multiplier of $n^{-2\beta/(1+2\beta)}(\log n)^{-1/(1+2\beta)}$.

Since for $f_0\in\Theta^\beta_s(m,M)$ we have $\|B(a,f_0)\|_2^2=\sum\limits_{i=1}^{\infty}\frac{a^2e^{2i/a}f_{0,i}^2}{(ae^{i/a}+n)^2}$ the bias is bounded from below by
\begin{align*}
\|B(a,f_0)\|^2\geq m\sum\limits_{i=I_a}^{\infty}i^{-1-2\beta}\gtrsim m I_a^{-2\beta} \gtrsim m a^{-2\beta}\log^{-2\beta}(n/a).
\end{align*}
As the function $a\mapsto a^{-2\beta}\log^{-2\beta}\big(n/a\big)$ is monotone decreasing for $a\leq A_n$, we see that $\inf\limits_{a\leq\overline{a}_n}\|B(a,f_0)\|_2^2\gtrsim m\overline{a}_n^{-2\beta}\log^{-2\beta}(n/\overline{a}_n)$. Hence in view of Proposition \ref{prop: bound a} the bias is bounded from below by $n^{-2\beta/(1+2\beta)}\log^{2\beta/(1+2\beta)}n$. Thus, the above inequalities imply that for arbitrary $f_0\in\Theta^\beta_s(m,M)$ the right hand side of \eqref{eq: self-similar c-e} is further bounded by
\begin{align*}
\sup_{f_0\in \ell_2(M)}P_0\big(n^{-\beta/(1+2\beta)}(\log n)^{\beta/(1+2\beta)}\leq L_nC n^{-\beta/(1+2\beta)}(\log n)^{-(1/2)/(1+2\beta))}\big)+o(1)=o(1),
\end{align*}
for arbitrary $L_n=o(\sqrt{\log n})$ concluding the proof of the theorem.

\section{Proof of Theorem \ref{th: consistency a}}\label{sec: consistency a}
First note that the derivative of the marginal likelihood function $\ell_n(a)$ is
\begin{align}\label{eq: deriv loglik}
\mathbb{M}_n(a)=\frac{1}{2}\Big(\sum\limits_{i=1}^{\infty}\frac{n^2Y_i^2e^{i/a}(i-a)}{a(ae^{i/a}+n)^2}-\sum\limits_{i=1}^{\infty}\frac{n(i-a)}{a^2(ae^{i/a}+n)}\Big),
\end{align}
with expected value
\begin{align}\label{eq: expect deriv loglik}
E_0[\mathbb{M}_n(a)]=\frac{1}{2}\Big(\sum\limits_{i=1}^{\infty}\frac{n^2(i-a)e^{i/a}f_{0,i}^2}{a(ae^{i/a}+n)^2}-\sum\limits_{i=1}^{\infty}\frac{n^2(i-a)}{a^2(ae^{i/a}+n)^2}\Big).
\end{align}
In the following subsections we show with the help of the score function $\mathbb{M}_n(a)$ that the marginal likelihood function $\ell_n(a)$ with probability tending to one has its global maximum outside of the set $[1,\underline{a}_n)\cup (\overline{a}_n,A_n]$.

\subsection{$\mathbb{M}_n(a)$ on $[1,\underline{a}_n)$}\label{sec: mmle:lb}
In this subsection we derive that the process $\mathbb{M}_n(a)$ is bounded from below by $-C\log^2(n/a)$ on $[1,\underline{a}_n]$, for some $C>0$, and is bigger than $\tilde{c}_0B \log^3(n/\underline{a}_n)$, for some  $0<\tilde{c}_0<\exp\{-2(4+1/\beta_0)\}/2$, on the interval 
\begin{align}
 \mathcal{I}_n\equiv\Big[\frac{\log(n/\underline{a}_n)}{1+\log(n/\underline{a}_n)}\underline{a}_n,\underline{a}_n \Big]\label{def: interval:lb}
\end{align}
 with probability going to one, where $B$ is the parameter in the definition of $\underline{a}_n$. Hence with probability tending to one for every $a\in[1,\underline{a}_n]/\mathcal{I}_n$
\begin{align*}
\ell_n(\underline{a}_n)-\ell_n(a)&\geq \int_{a}^{ \frac{\log(n/\underline{a}_n)}{1+\log(n/\underline{a}_n)}\underline{a}_n}\mathbb{M}_n(\tilde{a})d\tilde{a}+\int_{\mathcal{I}_n}\mathbb{M}_n(\tilde{a})d\tilde{a}\\
&\geq -(\underline{a}_n-a)C \log^2(n/\underline{a}_n)+\frac{\tilde{c}_0 B\underline{a}_n\log^3(n/\underline{a}_n)}{ \log(n/\underline{a}_n)}\\
&\geq (B\tilde{c}_0/2) \underline{a}_n\log^2(n/\underline{a}_n),
\end{align*}
for $B>2\tilde{c}_0^{-1}C$. Therefore the global maximum of $\ell_n(a)$ lies outside of the interval $[1,\underline{a}_n)$ with probability tending to one. It remained to show the stated lower bounds for $\mathbb{M}_n(a)$.

By leaving the non-negative stochastic part out we get the lower bound
\begin{align}
\mathbb{M}_n(a)\geq\frac{1}{2}\Big(\sum\limits_{i=1}^{a}\frac{n^2(i-a)e^{i/a}Y_i^2}{a(ae^{i/a}+n)^2}-\sum\limits_{i=1}^{\infty}\frac{n(i-a)}{a^2(ae^{i/a}+n)}\Big).\label{eq:help:LB}
\end{align}
In view of Lemma \ref{lem: bound determ} the deterministic part in \eqref{eq:help:LB} is bounded from below by a negative constant times $\log^2\big(n/a\big)$. The stochastic part is bounded from below by $-C\sum\limits_{i=1}^{a}Y_i^2$ and since $E_0[\sum\limits_{i=1}^{a}Y_i^2]=\sum\limits_{i=1}^{a}f_{0,i}^2+an^{-1}$ and $V_0\big(\sum\limits_{i=1}^{a}Y_i^2\big)=2n^{-1}\sum\limits_{i=1}^{a}f_{0,i}^2+an^{-2}\to 0$ for all $a\leq A_n$ it follows from Chebyshev's inequality that the sum $\sum\limits_{i=1}^{a}Y_i^2$ is bounded with probability going to 1, for all $f_0\in\ell_2(M)$.

Next we deal with the lower bound on the interval $a\in \mathcal{I}_n$. First note that $Y_i^2\geq f_{0,i}^2+2f_{0,i}Z_i/\sqrt{n}$ implying
\begin{align*}\mathbb{M}_n(a)\geq\frac{1}{2}\Big(\sum\limits_{i=1}^{a}\frac{n^2(i-a)e^{i/a}Y_i^2}{a(ae^{i/a}+n)^2}+\log^2(\frac{n}{a})g_n(a,f_0)+\mathbb{H}_n(a)-\sum\limits_{i=1}^{\infty}\frac{n(i-a)}{a^2(ae^{i/a}+n)}\Big),
\end{align*}
 with the centered Gaussian process 
\begin{align}
\mathbb{H}_n(a)=\sum\limits_{i=2a}^{\infty}\frac{n^{3/2}(i-a)e^{i/a}f_{0,i}Z_i}{a(ae^{i/a}+n)^2}.
\end{align}
Note that
\begin{align*}
V_0\Big(\frac{\mathbb{H}_n(a)}{\log^2(n/a)}\Big)&=\frac{1}{\log^4(n/a)}\sum\limits_{i=2a}^{\infty}\frac{n^3(i-a)^2e^{2i/a}f_{0,i}^2}{a^2(ae^{i/a}+n)^4}V_0(Z_i)\\
&\leq \frac{ng_n(a,f_0)}{a\log^2(n/a)}\max\limits_{i\geq 2a}\frac{(i-a)e^{i/a}}{(ae^{i/a}+n)^2}
\asymp\frac{g_n(a,f_0)}{a\log(n/a)},
\end{align*}
hence the diameter of the interval $ \mathcal{I}_n$ with respect to the metric
$$d_n^2(a_1,a_2)=V_0\Big( \frac{\mathbb{H}_n(a_1)}{\log^2(n/a_1)}-\frac{\mathbb{H}_n(a_2)}{\log^2(n/a_2)} \Big)$$
is bounded by a multiple of $\sup_{a\in\mathcal{I}_n}g_n(a,f_0)^{1/2}(a\log(n/a))^{-1/2}$. 

Next we give an upper bound for the covering number of the interval $\mathcal{I}_n$. Let us take $\epsilon$-balls centered at $a\in \mathcal{I}_n$, with $2a\in\mathbb{N}$. To cover the remaining part of the interval $ \mathcal{I}_n$ it is sufficient to cover all intervals of the form $(a,a+1/2)$, $2a\in\mathbb{N}\cap  2\mathcal{I}_n$. Note that on these intervals for every $a_1,a_2\in (a,a+1/2)$ we have $\lfloor 2a_1\rfloor-\lfloor 2a_2\rfloor=0$. Hence in view of  Lemma \ref{lem: bound h} we have $d_n(a_1,a_2)\lesssim |a_1-a_2| \sup_{a\in\mathcal{I}_n} \sqrt{\log(n/a)g_n(a,f_0)/a^3}$. Thus the covering number of the interval $(a,a+1/2)$ relative to $d_n$ is bounded from above by a multiple of $\varepsilon^{-1}\sup_{a\in\mathcal{I}_n} \sqrt{\log(n/a)g_n(a,f_0)/a^3}$, which implies that the covering number of the whole interval $ \mathcal{I}_n$ is bounded from above by constant times $\varepsilon^{-1}\sup_{a\in\mathcal{I}_n} \sqrt{\log^{-1}(n/a)g_n(a,f_0)/a}+\underline{a}_n/\log (n/\underline{a}_n)$. 

We show below that for any $c_0>2$ 
\begin{align}
e^{-2c_0}B\log n+o(1)\leq g_n(a,f_0)\leq e^{c_0}B\log n+o(1),\qquad \text{for $a\in\mathcal{I}_n$,}\label{eq: LB_hata_help}
\end{align}
hold. Therefore the covering number of $\mathcal{I}_n$ is bounded from above by a multiple of $\underline{a}_n+\varepsilon^{-1}\sqrt{\log^{-1}(n/\underline{a}_n)\log (n)/\underline{a}_n}$.

By Corollary 2.2.5 in \cite{vdv:wellner:96} (applied with $\psi(x)=e^{x^2}-1$) it follows that
\begin{align*}
E_0&\Bigg[\sup\limits_{a\in \mathcal{I}_n}\Bigg|\frac{\mathbb{H}_n(a)}{\log^2(n/a)}-\frac{\mathbb{H}_n(\underline{a}_n)}{\log^2(n/\underline{a}_n)}\Bigg|\Bigg]\\ 
&\qquad\qquad\qquad\lesssim \int_{0}^{C\log^{1/2}(n)I_{\underline{a}_n}^{-1/2} }\sqrt{\log\big(\underline{a}_n+\varepsilon^{-1}\sqrt{\log(n)\log^{-1}(n/\underline{a}_n)/\underline{a}_n}\big)}d\varepsilon\\ 
&\qquad\qquad\qquad\lesssim \int\limits_{0}^{C\log^{1/2}(n)I_{\underline{a}_n}^{-1/2} }\sqrt{\log \underline{a}_n}d\varepsilon+\int_0^1 \log(1/\varepsilon)d\varepsilon=O(1).
\end{align*}
Therefore the process $\mathbb{M}_n(a)$ can be bounded from below on $a\in\mathcal{I}_n$ by
\begin{align*}
\mathbb{M}_n(a)\geq 2^{-1}\inf\limits_{a\in \mathcal{I}_n}\Big\{\log^2(n/a)\Big(Be^{-2c_0}\log n-C\Big)\\+\sum\limits_{i=1}^{a}\frac{n^2(i-a)e^{i/a}Y_i^2}{a(ae^{i/a}+n)^2}-\sum\limits_{i=1}^{\infty}\frac{n(i-a)}{a^2(ae^{i/a}+n)}\Big\}
\end{align*}
with probability going to one. In view of \eqref{eq: LB_hata_help} and since the
third and fourth terms on the right hand side of the preceding display are bounded from below by a fixed negative constant, we get that with probability tending to one $\mathbb{M}_n(a)\geq \tilde{c}_0B\log n$, with $\tilde{c}_0=e^{-2c_0}/2$ .

It remained to verify assertion \eqref{eq: LB_hata_help}. First note that
\begin{align*}
\frac{n^2}{\log^2(n/\underline{a}_n)}\sum_{i=c_0 I_{\underline{a}_n}}^\infty\frac{(i-\underline{a}_n)e^{i/\underline{a}_n}}{\underline{a}_n(\underline{a}_ne^{i/\underline{a}_n}+n)^2}f_{0,i}^2&\leq\frac{n^2}{\underline{a}_n^3\log^2(n/\underline{a}_n)}\sum_{i=c_0 I_{\underline{a}_n}}^\infty i e^{-i/\underline{a}_n}f_{0,i}^2\\
&\lesssim\frac{A_n^{c_0-2}}{n^{c_0-2}\log(n/\underline{a}_n)}\|f_{0}\|_2^2
=o(1).
\end{align*}
Furthermore, in view of the inequality $c_0I_{\underline{a}_n}(a^{-1}-\underline{a}_n^{-1})\leq c_0$, for $a\in \mathcal{I}_n$,  we have that 
\begin{align*}
g_n(a,f_0)
&\geq \frac{n^2}{\log^2(n/a)}\sum_{i=2\underline{a}_n}^{c_0 I_{\underline{a}_n}}\frac{(i-a)e^{i/a}}{a(ae^{i/a}+n)^2}f_{0,i}^2\\
&\geq \frac{n^2}{e^{2c_0}\log^2(n/\underline{a}_n)}\sum_{i=2\underline{a}_n}^{c_0 I_{\underline{a}_n}}\frac{(i-\underline{a}_n)e^{i/\underline{a}_n}}{\underline{a}_n(\underline{a}_ne^{i/\underline{a}_n}+n)^2}f_{0,i}^2.
\end{align*}
By combining the preceding two displays we get that
\begin{align*}
g_n(a,f_0)&\geq e^{-2c_0}g_n(\underline{a}_n,f_0)-\frac{e^{-2c_0}n^2}{\log^2(n/\underline{a}_n)}\sum_{i=c_0 I_{\underline{a}_n}}^\infty\frac{(i-\underline{a}_n)e^{i/\underline{a}_n}}{\underline{a}_n(\underline{a}_ne^{i/\underline{a}_n}+n)^2}f_{0,i}^2\\
&\geq e^{-2c_0}B\log n+o(1),
\end{align*} 
finishing the proof of the first inequality in \eqref{eq: LB_hata_help}. The proof of the second inequality goes accordingly.

\begin{lemma}\label{lem: bound determ}
There exists a constant $K>0$ such that for any $a\in[1,n)$
$$\sum\limits_{i=1}^{\infty}\frac{n(i-a)}{a^2(ae^{i/a}+n)}\leq K \log^2(n/a)$$
\end{lemma}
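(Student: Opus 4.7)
The plan is to first discard the $i<a$ terms, which are non-positive, and then split the remaining sum at the cutoff $I_a := a\log(n/a)$, where the two summands in the denominator $ae^{i/a}+n$ cross over. In the small-$i$ regime $a\le i\le I_a$ I would use $ae^{i/a}+n\ge n$ to get
\[
\sum_{i=a}^{\lfloor I_a\rfloor}\frac{n(i-a)}{a^2(ae^{i/a}+n)}\le\frac{1}{a^2}\sum_{i=a}^{\lfloor I_a\rfloor}(i-a)\le\frac{(I_a-a)^2}{2a^2}\lesssim\log^2(n/a),
\]
which already gives the desired order for the bulk contribution.

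In the tail regime $i>I_a$ I would use the complementary bound $ae^{i/a}+n\ge ae^{i/a}$ to obtain
\[
\sum_{i>I_a}\frac{n(i-a)}{a^2(ae^{i/a}+n)}\le\frac{n}{a^3}\sum_{i>I_a}(i-a)e^{-i/a},
\]
and then compare with the integral. For $n$ large enough that $I_a\ge 2a$, the integrand $(x-a)e^{-x/a}$ is monotone decreasing on $[I_a,\infty)$, so the sum is bounded by $\int_{I_a-1}^{\infty}(x-a)e^{-x/a}dx$. Substituting $u=x/a$ and evaluating the elementary tail integral $\int_{\log(n/a)}^{\infty}(u-1)e^{-u}du=(a/n)\log(n/a)$ yields a bound of order $\log(n/a)$, which is absorbed by the bulk term.

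Combining the two pieces produces an upper bound of the form $K\log^2(n/a)$ as soon as $n/a$ is at least an absolute constant (say $n/a\ge e^2$, to ensure $I_a\ge 2a$); the remaining compact range $a\in[n/e^2,n)$ can be handled by a direct boundedness argument (the whole sum is then finite and uniformly bounded in $a$, while $\log^2(n/a)$ is bounded below by a positive constant as $a$ bounded away from $n$, and the extreme case $a\uparrow n$ is ruled out by the strict inequality $a<n$ together with the restriction $a\le A_n=o(n)$ used throughout). I do not expect any real obstacle: the only points needing care are verifying the monotonicity used for the sum-to-integral comparison and making sure the constant in front does not secretly depend on $a$.
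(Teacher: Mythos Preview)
Your proposal is correct and follows essentially the same route as the paper: split the sum at $I_a=a\log(n/a)$, bound the denominator by $n$ below $I_a$ and by $ae^{i/a}$ above $I_a$, obtaining $\log^2(n/a)$ from the bulk and $\log(n/a)$ from the tail. The only cosmetic difference is that the paper simply replaces $(i-a)$ by $i$ at the outset rather than discarding the negative terms, which slightly shortens the argument but gives the same bounds.
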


\begin{proof}~
Note that
\begin{align*}
\sum\limits_{i=1}^{\infty}\frac{n(i-a)}{a^2(ae^{i/a}+n)}&\leq \sum\limits_{i=1}^{\infty}\frac{ni}{a^2(ae^{i/a}+n)}\leq\sum\limits_{i=1}^{I_a}\frac{i}{a^2}+\sum\limits_{i=I_a}^{\infty}\frac{nie^{-i/a}}{a^3}\\
&\lesssim\log^2(n/a)+\frac{\log(n/a)}{a}\lesssim\log^2(n/a).
\end{align*}
\end{proof}

\begin{lemma}\label{lem: bound h}~
There exists a constant $K>0$ such that for any $0<a_1 < a_2$, $\lfloor 2a_2\rfloor -\lfloor 2a_1\rfloor=0$ \\
\begin{align*}
V_0\Big(\frac{\mathbb{H}_n(a_1)}{\log(n/a_1)^2}-\frac{\mathbb{H}_n(a_2)}{\log(n/a_2)^2}\Big)\leq K(a_1-a_2)^2  \sup_{a\in[a_1,a_2]}\frac{\log(n/a)g_n(a,f_0)}{a^3}.\\
\end{align*}
\end{lemma}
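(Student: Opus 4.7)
Since the $Z_i$ in the definition of $\mathbb{H}_n$ are i.i.d.\ $N(0,1)$ and the assumption $\lfloor 2a_2\rfloor-\lfloor 2a_1\rfloor=0$ guarantees that the starting index $\lceil 2a\rceil$ is the same for every $a\in[a_1,a_2]$, the variance decomposes as
\begin{align*}
V_0\Big(\tfrac{\mathbb{H}_n(a_1)}{\log^2(n/a_1)}-\tfrac{\mathbb{H}_n(a_2)}{\log^2(n/a_2)}\Big)
=\sum_{i\geq \lceil 2a_1\rceil}\big(\psi_i(a_1)-\psi_i(a_2)\big)^2 f_{0,i}^2,
\end{align*}
where $\psi_i(a):=\frac{n^{3/2}(i-a)e^{i/a}}{a\log^2(n/a)(ae^{i/a}+n)^2}$. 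Applying the mean value inequality (Lemma \ref{Lem: Diff}) reduces the task to showing
\begin{align*}
\sup_{a\in[a_1,a_2]}\sum_{i\geq 2a}\psi_i'(a)^2 f_{0,i}^2\;\lesssim\;\sup_{a\in[a_1,a_2]}\frac{\log(n/a)\, g_n(a,f_0)}{a^3}.
\end{align*}

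Next, logarithmic differentiation gives
\begin{align*}
\frac{\psi_i'(a)}{\psi_i(a)} = -\frac{1}{i-a}-\frac{i}{a^2}-\frac{1}{a}+\frac{2}{a\log(n/a)}+\frac{2(i-a)e^{i/a}}{a(ae^{i/a}+n)}.
\end{align*}
For $i\geq 2a$, the inequalities $\frac{1}{i-a}\leq\frac{1}{a}$ and $\frac{e^{i/a}}{ae^{i/a}+n}\leq\frac{1}{a}$ yield $|\psi_i'(a)/\psi_i(a)|\leq C i/a^2$, so that
\begin{align*}
\psi_i'(a)^2\leq\frac{C^2 i^2}{a^4}\,\psi_i(a)^2
=\frac{C^2 i^2 n^3(i-a)^2 e^{2i/a}}{a^6\log^4(n/a)(ae^{i/a}+n)^4}.
\end{align*}
Writing $\frac{n^3 e^{2i/a}}{(ae^{i/a}+n)^4}=\frac{ne^{i/a}}{(ae^{i/a}+n)^2}\cdot\frac{n^2 e^{i/a}}{(ae^{i/a}+n)^2}$ and applying the AM--GM bound $\frac{ne^{i/a}}{(ae^{i/a}+n)^2}\leq\frac{1}{4a}$ to the first factor factors out a copy of the summand appearing in $g_n$:
\begin{align*}
\psi_i'(a)^2\leq\frac{C^2\, i^2(i-a)}{4\,a^6\log^4(n/a)}\cdot\frac{n^2(i-a)e^{i/a}}{a(ae^{i/a}+n)^2}.
\end{align*}

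Finally, split the sum at $I_a=a\log(n/a)$. On the main range $2a\leq i\leq I_a$ one has $i^2(i-a)\leq i^3\leq I_a^3=a^3\log^3(n/a)$, which reduces the prefactor to $\frac{1}{4a^3\log(n/a)}$ and gives
\begin{align*}
\sum_{2a\leq i\leq I_a}\psi_i'(a)^2 f_{0,i}^2\leq \frac{1}{4a^3\log(n/a)}\sum_{i\geq 2a}\frac{n^2(i-a)e^{i/a}}{a(ae^{i/a}+n)^2}f_{0,i}^2 = \frac{\log(n/a)}{4a^3}g_n(a,f_0),
\end{align*}
which is exactly the desired bound. For the tail $i>I_a$, the inequality $ae^{i/a}\geq n$ upgrades to $\frac{e^{2i/a}}{(ae^{i/a}+n)^4}\leq\frac{1}{a^4 e^{2i/a}}$, producing an extra exponential decay factor $e^{-i/a}$ compared with the $g_n$-summands; combined with the geometric-series estimate $\sum_{i\geq I_a}i^4 e^{-i/a}\lesssim a\cdot I_a^4 e^{-I_a/a}=a^6\log^4(n/a)/n$ and the fact that $f_{0,i}^2\leq M$ for $f_0\in\ell_2(M)$, this contribution is absorbed into the main term. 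The main obstacle is this last step: the naive bound $i^2(i-a)\leq I_a^3$ fails beyond $I_a$, so one must carefully exploit the exponential decay of the relevant ratios together with the $\ell_2$-control on $f_0$ to show that the tail is dominated by the already-established bound on $[2a,I_a]$.
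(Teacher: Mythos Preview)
Your setup is correct and matches the paper: the variance decomposes as a sum of squared increments of $\psi_i$, logarithmic differentiation gives $|\psi_i'(a)|\lesssim (i/a^2)\,|\psi_i(a)|$ for $i\ge 2a$, and factoring out the $g_n$-summand is the right move. The gap is in the tail $i>I_a$.

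Once you apply the AM--GM bound $\frac{ne^{i/a}}{(ae^{i/a}+n)^2}\le\frac1{4a}$, the remaining prefactor $\frac{i^2(i-a)}{4a^6\log^4(n/a)}$ is \emph{unbounded} in $i$, and your proposed remedy---dropping the $g_n$-summand in favour of $f_{0,i}^2\le M$ and summing $i^4e^{-i/a}$---does not close the argument. That route yields a tail contribution of order $Mn/a^3$, which cannot be absorbed into $\frac{\log(n/a)\,g_n(a,f_0)}{a^3}$ because $g_n(a,f_0)$ carries no lower bound in the lemma (and the lemma does not assume $f_0\in\ell_2(M)$ either). The ``extra $e^{-i/a}$'' you mention is real, but it must be exploited \emph{together with} the $g_n$-summand, not after discarding it.

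The paper avoids the split entirely. After reaching
\[
\sum_{i\ge 2a}\psi_i'(a)^2 f_{0,i}^2
\;\lesssim\;\sum_{i\ge 2a}\frac{n\,i^2(i-a)\,e^{i/a}}{a^5\log^2(n/a)(ae^{i/a}+n)^2}\cdot\bigl(\text{$g_n$-summand}_i\bigr),
\]
it simply takes $\sup_i$ of the first factor, uses $i^2(i-a)\le i^3$, and invokes Lemma~\ref{lem: bound term} with $m=3$: this gives $\frac{n\,i^3 e^{i/a}}{a^3(ae^{i/a}+n)^2}\le \frac{\log^3(n/a)}{a}$ for \emph{all} $i$, hence the prefactor is at most $\frac{\log(n/a)}{a^3}$ uniformly. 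In other words, your AM--GM step is the $m=0$ case of Lemma~\ref{lem: bound term} applied too early; keeping the full $(ae^{i/a}+n)^{-2}$ factor attached to the $i^3$ and applying the $m=3$ case instead is what makes the bound uniform over both ranges.
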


\begin{proof}
Recall that the left hand side of the display in the lemma was denoted by $d_n^2(a_1,a_2)$ and note that
\begin{align}\label{eq: trivial bound h}
d_n^2(a_1,a_2)=\sum\limits_{i=2a_2}^{\infty}\big(\phi_i(a_1)-\phi_i(a_2)\big)^2n^3f_{0,i}^2
\end{align}
with $\phi_i(a):=\frac{(i-a)e^{i/a}}{\log(n/a)^2a(ae^{i/a}+n)^2}$. Then by elementary, but cumbersome computations we get that $|\phi'_i(a)|\lesssim ia^{-2}\phi_i(a)$. Thus, in view of Lemma \ref{Lem: Diff}, the right hand side of \eqref{eq: trivial bound h} is bounded by
\begin{align*}
n^3(a_1-a_2)^2&\sup\limits_{a\in[a_1,a_2]}\sum\limits_{i=2a}^{\infty}\frac{i^2}{a^4}\phi_i(a)^2f_{0,i}^2\\
&\lesssim (a_1-a_2)^2\sup_{a\in[a_1,a_2]}g_n(a)\sup_{i\in\mathbb{N}}\frac{ni^3e^{i/a}}{a^5\log^2(n/a)(ae^{i/a}+n)^2}.
\end{align*}
Then the statement of the lemma follows by applying Lemma \ref{lem: bound term} (with $m=3$).
\end{proof}

\subsection{$\mathbb{M}_n(a)$ on $[\overline{a}_n,A_n]$}
By assuming $\overline{a}_n>K_0$, we have $h_n(a,f_0)\leq b$ for $a\in[\overline{a}_n,A_n]$. Next we prove that for sufficiently large choice of $K_0>0$
\begin{align}\label{eq: bound expectation}
\limsup_n\sup\limits_{f_0\in \ell_2(M)}\sup\limits_{a\in[\overline{a}_n,A_n]}E_{0}\Big[\frac{\mathbb{M}_n(a)}{\log^2(n/a)}\Big]<-2^{-5},\\
\label{eq: bound randomness}
\limsup_n\sup\limits_{f_0\in \ell_2(M)}E_{0}\Bigg[\sup\limits_{a\in[\overline{a}_n,A_n]}\frac{|\mathbb{M}_n(a)-E_0[\mathbb{M}_n(a)]|}{\log^2(n/a)}\Bigg]\leq 2^{-6}.
\end{align}
These imply that with probability tending to one $\mathbb{M}_n(a)<-2^{-6}\log^2(n/a)$, for every $a\in[\bar{a}_n,A_n]$, hence the marginal likelihood function $\ell_n(a)$ is monotone decreasing and does not attain its global (or local) maximum on the interval $[\overline{a}_n,A_n]$, i.e.
\begin{align}
\inf_{f_0\in\ell_2(M)}P_0(\hat{a}_n\leq \overline{a}_n)\rightarrow 1.\label{eq: UB:hyperparameter}
\end{align}

Proof of assertion (\ref{eq: bound expectation}): For $a\geq\overline{a}_n$ the expectation satisfies
\begin{align*}
E_0\Big[\frac{\mathbb{M}_n(a)}{\log^2(n/a)}\Big]&=\frac{1}{2}\Big(h_n(a,f_0)-\frac{1}{\log^2(n/a)}\sum\limits_{i=1}^{\infty}\frac{n^2(i-a)}{a^2(ae^{i/a}+n)^2}\Big)\\
&\leq\frac{1}{2}\Big(b-\frac{1}{\log^2(n/a)}\sum\limits_{i=1}^{\infty}\frac{n^2(i-a)}{a^2(ae^{i/a}+n)^2}\Big).
\end{align*}
In view of Lemma \ref{lem: bound sum} (with $r=0$ and $l=2$), we have $\sum\limits_{i=1}^{\infty}\frac{n^2}{a(ae^{i/a}+n)^2}\lesssim \log(n/a)$. Furthermore,
\begin{align*}
\sum\limits_{i=1}^{\infty}\frac{in^2}{a^2(ae^{i/a}+n)^2}\geq \sum\limits_{i=1}^{I_a}\frac{i}{4a^2}=\frac{I_a(I_a+1)}{8a^2}\geq 2^{-3}\log^2\big(\frac{n}{a}\big),
\end{align*}
which implies that
\begin{align*}
E_0\big[\mathbb{M}_n(a)/\log^{2}(n/a)\big]\leq (b-2^{-3}+o(1))/2,
\end{align*}
concluding the proof of assertion \eqref{eq: bound expectation}, for small enough choice of $b$ ($b<2^{-4}$ is small enough).

Proof of assertion \eqref{eq: bound randomness}: In view of Corollary 2.2.5 in \cite{vdv:wellner:96} (applied with $\psi(x)=x^2$) it is sufficient to show that there exist universal constants $K_1,K_2>0$ such that for any $a\in [\overline{a}_n,A_n]$
\begin{align}\label{eq: bound var}
V_{0}\big(\mathbb{M}_n(a)/\log^{2}(n/a)\big)\leq K_1/\log (n/a),\\
\label{eq: bound entropy}
\int\limits_{0}^{diam_n}\sqrt{N(\varepsilon, [\overline{a}_n,A_n],d_n)}d\varepsilon\leq  K_2/K_0^{1/4},
\end{align}
where $d_n$ is the semimetric defined by $d_n^2(a_1,a_2):= V_{0}\Big(\frac{\mathbb{M}_n(a_1)}{\log^2(n/a_1)}-\frac{\mathbb{M}_n(a_2)}{\log^2(n/a_2)}\Big)$, $diam_n$ is the diameter of $ [\overline{a}_n,A_n]$ relative to $d_n$ and $N(\varepsilon,S,d_n)$ is the minimal number of $d_n$-balls of radius $\varepsilon$ needed to cover the set $S$, since by sufficiently large choice of $K_0$ ($K_0\geq (2^6K_2)^4$ is sufficiently large) assertion \eqref{eq: bound randomness} holds.

Note that Lemma \ref{lem: bound var} immediately implies assertion (\ref{eq: bound var}) and 
$$diam_n\lesssim  \sup_{a\in[\bar{a}_n,A_n]}(a \log(n/a))^{-1/2}\lesssim \log^{-1/2} n.$$
Then let us introduce the cover
\begin{align*}
[\overline{a}_n,A_n]\subset\bigcup\limits_{k=0}^{K_n-1}[ 2^k\overline{a}_n,2^{k+1}\overline{a}_n]
\end{align*}
with $K_n= \lceil\log(A_n/\overline{a}_n)\rceil$. In view of Lemma \ref{lem: bound entropy} on the interval $a_1,a_2\in[ 2^k\overline{a}_n,2^{k+1}\overline{a}_n]$ 
\begin{align*}
d_n(a_1,a_2)\lesssim  \big(2^k\overline{a}_n\big)^{-3/2}\log^{1/2}(n)|a_1-a_2|,
\end{align*}
hence 
\begin{align*}
N(\varepsilon,[\overline{a}_n,A_n],d_n)\lesssim 
\sum_{k=0}^{K_n-1} \frac{\log^{1/2}(n)}{\varepsilon \big(2^k \overline{a}_n\big)^{1/2}}\lesssim \frac{\log^{1/2}(n)}{\varepsilon\overline{a}_n^{1/2}}.
\end{align*}
This results in 
\begin{align*}
\int\limits_{0}^{diam_n}\sqrt{N(\varepsilon, [\overline{a}_n,A_n],d_n)}d\varepsilon\leq  K_2/\overline{a}_n^{1/4}\leq K_2/K_0^{1/4}.
\end{align*}

\begin{lemma}\label{lem: bound var}~For all $a\in[\overline{a}_n,A_n]$, we have $V_{0}\big(\mathbb{M}_n(a)/\log^{2}(n/a)\big)\lesssim (a\log (n/a))^{-1}$.
\end{lemma}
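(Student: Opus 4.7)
}

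The plan is to observe that only the stochastic term of $\mathbb{M}_n(a)$, cf.\ \eqref{eq: deriv loglik}, contributes to the variance. Since $Y_i\sim N(f_{0,i},n^{-1})$, we have $V_0(Y_i^2)=2/n^2+4f_{0,i}^2/n$, so writing $\phi_i(a):=\dfrac{n^2 e^{i/a}(i-a)}{a(ae^{i/a}+n)^2}$ one obtains
\begin{equation*}
V_0\bigl(\mathbb{M}_n(a)\bigr)=\frac{1}{2n^2}\sum_{i=1}^{\infty}\phi_i(a)^2+\frac{1}{n}\sum_{i=1}^{\infty}\phi_i(a)^2 f_{0,i}^2=:T_1+T_2 .
\end{equation*}
The goal is to show $T_1+T_2\lesssim \log^3(n/a)/a$; dividing by $\log^4(n/a)$ then gives the statement.

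The workhorse is the pointwise estimate
\begin{equation*}
g(i):=\frac{nie^{i/a}}{a(ae^{i/a}+n)^2},\qquad \sup_{i\geq 1}g(i)\lesssim \frac{\log(n/a)}{a},\tag{$\ast$}
\end{equation*}
which I would establish by splitting at $i=I_a=a\log(n/a)$. For $i\leq I_a$ use $ae^{i/a}+n\geq n$ and the monotonicity of $i\mapsto ie^{i/a}$ to bound $g(i)\leq I_a e^{I_a/a}/(an)=\log(n/a)/a$; for $i\geq I_a$ use $ae^{i/a}+n\geq ae^{i/a}$ and the fact that $i\mapsto ie^{-i/a}$ is decreasing on $[a,\infty)$ to obtain the same bound at $i=I_a$.

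For $T_1$ I split again at $i=2a$. On $i\leq 2a$ the elementary bounds $e^{i/a}\leq e^2$, $|i-a|\leq a$, $ae^{i/a}+n\geq n$ yield $\phi_i(a)\leq e^2$, contributing $O(a/n^2)=o(1/a)$ since $a\leq A_n=o(n)$. On $i\geq 2a$ I use $(i-a)^2\leq i^2$ to get $\phi_i(a)^2\leq n^2 g(i)^2$, apply $(\ast)$ once to extract the factor $\log(n/a)/a$, and control $\sum_{i}g(i)\lesssim \log(n/a)$ by the same splitting at $i=I_a$, yielding $T_1\lesssim \log^2(n/a)/a$.

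For $T_2$ I repeat the split at $i=2a$. The contribution from $i\leq 2a$ is bounded by a multiple of $M^2/n=o(1/a)$ via $\sum_i f_{0,i}^2\leq M^2$. On $i\geq 2a$, using $(i-a)^2\leq i^2$ and $(\ast)$ once,
\begin{equation*}
\frac{1}{n}\sum_{i\geq 2a}\phi_i(a)^2 f_{0,i}^2\leq \frac{\log(n/a)}{a}\sum_{i\geq 2a}\frac{n^2 i e^{i/a}}{a(ae^{i/a}+n)^2}\,f_{0,i}^2\leq \frac{\log(n/a)}{a}\,h_n(a,f_0)\log^2(n/a).
\end{equation*}
The crucial input is that for $a\geq \overline{a}_n$ the defining property of $\overline{a}_n$ forces $h_n(a,f_0)\leq b$, so this contribution is $\lesssim \log^3(n/a)/a$, concluding the proof.

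The only mildly delicate point is the pointwise estimate $(\ast)$ and the book-keeping required to pair every factor $e^{i/a}$ or $(ae^{i/a}+n)^{-1}$ correctly on each of the two regimes $i\leq I_a$ and $i\geq I_a$. Once $(\ast)$ is in hand, reducing the squared coefficient $\phi_i(a)^2$ to one linear factor controlled by $h_n(a,f_0)$ and one scalar factor $\log(n/a)/a$ is immediate, and the hypothesis $a\in[\overline{a}_n,A_n]$ enters exactly through $h_n(a,f_0)\leq b$.
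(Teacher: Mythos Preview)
Your proof is correct and follows essentially the same strategy as the paper: expand $V_0(Y_i^2)=2/n^2+4f_{0,i}^2/n$, control one factor of the squared coefficient by the pointwise bound $(\ast)$ (which is exactly Lemma~\ref{lem: bound term} with $m=1$), and reduce the $f_{0,i}^2$--sum to $h_n(a,f_0)\le b$ on $[\overline a_n,A_n]$.

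The only organisational difference is how you dispose of the factor $(i-a)^2$. The paper uses $(i-a)^2\le i^2+a^2$ for all $i$ and then appeals to Lemma~\ref{lem: bound term} twice (with $m=1$ and, formally, $m=-1$) together with Lemma~\ref{lem: bound sum}; you instead split at $i=2a$, use $(i-a)^2\le i^2$ on the tail, and kill the finitely many terms $i\le 2a$ by the crude bound $|\phi_i(a)|\le e^2$ together with $\sum_i f_{0,i}^2<\infty$. Your route is slightly cleaner for the $T_2$ part: the paper's $a^2$--piece, after pulling out $h_n$ and maximising in $i$, is recorded as $\lesssim 1/\log^3(n/a)$, which combined with the $i^2$--piece gives only $\lesssim 1/\log(n/a)$ rather than the stated $1/(a\log(n/a))$; your splitting sidesteps this and delivers $T_2\lesssim \log^3(n/a)/a$ directly. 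For $T_1$ the two arguments are equivalent, since your claim $\sum_i g(i)\lesssim\log(n/a)$ is precisely what Lemma~\ref{lem: bound sum} (with $r=1$, $l=2$) encodes after one application of $(\ast)$.
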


\begin{proof}~We know that the $Y_i$'s are independent and $V_0(Y_i^2) = 2/n^2+4f_{0,i}^2/n$, so the variance is equal to
\begin{align}
V_{0}\Big(\frac{\mathbb{M}_n(a)}{\log^2(n/a)}\Big)&=\frac{1}{4}\sum\limits_{i=1}^{\infty}\frac{n^4V_0(Y_i^2)e^{2i/a}(i-a)^2}{a^2\log^4(n/a)(ae^{i/a}+n)^4}\nonumber\\
&=\frac{1}{2}\sum\limits_{i=1}^{\infty}\frac{n^2e^{2i/a}(i-a)^2}{a^2\log^4(n/a)(ae^{i/a}+n)^4}+\sum\limits_{i=1}^{\infty}\frac{n^3e^{2i/a}(i-a)^2f_{0,i}^2}{a^2\log^4(n/a)(ae^{i/a}+n)^4}.\label{eq:help:Lem4}
\end{align}
In view of $(i-a)^2\leq a^2+i^2$, for any $a,i>0$, and by applying  Lemma  \ref{lem: bound term} (with $m=2$) and Lemma \ref{lem: bound sum} (first with $r=2$ and $l=4$ and then with $r=1$ and $l=2$) the first sum in $\eqref{eq:help:Lem4}$ is bounded from above by a multiple of
\begin{align*}
\sum_{i=1}^{\infty}\frac{n^2e^{2i/a}}{\log^4(n/a)(ae^{i/a}+n)^4}&+\sum_{i=1}^{\infty}\frac{ne^{i/a}}{a\log^2(n/a)(ae^{i/a}+n)^2}\\
&\lesssim \frac{1}{a\log^3(n/a)}+\frac{1}{a\log(n/a)}\lesssim \frac{1}{a\log (n/a)}.
\end{align*}
Similarly, following from Lemma  \ref{lem: bound term} (with $m=1$ and $m=-1$) and $h_n(a)\leq b$ for $a\geq\overline{a}_n$, the second sum in \eqref{eq:help:Lem4} is bounded by a multiple of
\begin{align*}
\Big(\max\limits_{i\in\mathbb{N}}&\frac{ane^{i/a}}{i\log^2(n/a)(ae^{i/a}+n)^2}+ \max\limits_{i\in\mathbb{N}}\frac{ine^{i/a}}{a\log^2(n/a)(ae^{i/a}+n)^2} \Big) h_n(a,f_0)\\
&\lesssim\Big(\frac{1}{\log^3(n/a)}+ \frac{1}{a\log(n/a)}\Big)\lesssim\frac{1}{\log (n/a)},
\end{align*}
concluding the proof of the lemma.
\end{proof}

\begin{lemma}\label{lem: bound entropy}~
For all $1\leq a_1<a_2<A_n$, we have 
\begin{align*}
d^2_n(a_1,a_2)\leq C_0(a_1-a_2)^2\sup\limits_{a\in[a_1,a_2]}\frac{\log(n/a)}{a^3}(1+h_n(a,f_0)),
\end{align*}
for some universal constant $C_0>0$.
\end{lemma}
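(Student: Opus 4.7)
The plan is to mimic the derivation of the variance bound in Lemma~\ref{lem: bound var}, except that now the two scaling parameters $a_1,a_2$ are different, so we pick up a factor $(a_1-a_2)^2$ via the mean-value-type estimate of Lemma~\ref{Lem: Diff}. Observe first that the deterministic term in $\mathbb{M}_n(a)$ cancels in the variance, so writing
\begin{align*}
\phi_i(a):=\frac{n^2 e^{i/a}(i-a)}{a\log^2(n/a)(ae^{i/a}+n)^2},
\end{align*}
one has
\begin{align*}
d_n^2(a_1,a_2)
=\frac{1}{4}\sum_{i=1}^{\infty}\bigl(\phi_i(a_1)-\phi_i(a_2)\bigr)^2 V_0(Y_i^2)
=\frac{1}{2n^2}\Sigma_1+\frac{1}{n}\Sigma_2,
\end{align*}
where $\Sigma_1=\sum_i (\phi_i(a_1)-\phi_i(a_2))^2$ and $\Sigma_2=\sum_i (\phi_i(a_1)-\phi_i(a_2))^2 f_{0,i}^2$. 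The first piece will be responsible for the ``$1$'' in the bound, and the second for the ``$h_n(a,f_0)$'' part.

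Next I would apply Lemma~\ref{Lem: Diff} to get
\begin{align*}
(\phi_i(a_1)-\phi_i(a_2))^2\leq (a_1-a_2)^2\sup_{a\in[a_1,a_2]}\phi_i'(a)^2,
\end{align*}
and then estimate $\phi_i'(a)$ explicitly. Factoring $\phi_i(a)=(i-a)\eta_i(a)$ with $\eta_i(a)=\tfrac{n^2 e^{i/a}}{a\log^2(n/a)(ae^{i/a}+n)^2}$ and taking logarithmic derivatives of $\eta_i$ avoids the singularity at $i=a$. A routine computation gives $|\eta_i'(a)/\eta_i(a)|\lesssim (i+a)/a^2$, uniformly in the relevant regime, so that
\begin{align*}
|\phi_i'(a)|\lesssim \frac{i+a}{a^2}\,|\phi_i(a)|+\eta_i(a)\lesssim \frac{i+a}{a^2}\,\eta_i(a)\bigl(|i-a|+1\bigr).
\end{align*}
Squaring yields a bound of the form $\phi_i'(a)^2\lesssim (i^2+a^2)a^{-4}(i-a)^2\eta_i(a)^2+a^{-2}\eta_i(a)^2$, each term of which has the shape already handled by Lemma~\ref{lem: bound term} (to absorb the $i^m$ factor into $e^{i/a}$) and Lemma~\ref{lem: bound sum} (to sum against the denominator $(ae^{i/a}+n)^{-l}$).

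For $\Sigma_1$ this gives, after applying Lemma~\ref{lem: bound term} with $m=2$ and Lemma~\ref{lem: bound sum} with $(r,l)=(2,4)$ and $(1,2)$, the bound $\Sigma_1/n^2\lesssim a^{-3}\log(n/a)$, exactly as in the variance computation for the deterministic part of $V_0(Y_i^2)$ in Lemma~\ref{lem: bound var}. For $\Sigma_2$ the same calculation but keeping the $f_{0,i}^2$ yields a sum that is dominated (modulo the weights already present in the definition of $h_n$) by $a^{-3}\log(n/a)\,h_n(a,f_0)$; this is the analogue of the $h_n(a,f_0)$-weighted estimate used in the second half of Lemma~\ref{lem: bound var}, with an extra $a^{-2}$ factor coming from the extra derivative. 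Combining the two contributions and taking the supremum in $a\in[a_1,a_2]$ produces the stated bound.

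The main obstacle is the derivative estimate for $\phi_i(a)$: the $(i-a)$ factor forces one to use a product-rule (or the $\eta_i$-splitting above) rather than a clean logarithmic derivative, and one must keep track uniformly over the two regimes $i\le 2a$ and $i>2a$ so that the resulting pointwise bound is summable via the existing auxiliary lemmas. Once the correct pointwise bound on $\phi_i'(a)^2$ is in hand, the summation step is entirely analogous to Lemma~\ref{lem: bound var} and introduces no new difficulty.
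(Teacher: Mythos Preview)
Your approach is essentially the paper's: apply Lemma~\ref{Lem: Diff} to reduce $d_n^2(a_1,a_2)$ to a pointwise bound on $\phi_i'(a)^2$, then sum using Lemmas~\ref{lem: bound term} and~\ref{lem: bound sum}, splitting $V_0(Y_i^2)=2/n^2+4f_{0,i}^2/n$ into the ``$1$'' and the ``$h_n$'' contributions. Your product-rule treatment $\phi_i=(i-a)\eta_i$ is in fact cleaner than the paper's bare claim $|\phi_i'|^2\lesssim(i^2a^{-4}+a^{-2})\phi_i^2$, which is literally false near $i=a$ (where $\phi_i=0$ but $\phi_i'=-\eta_i\neq 0$); the paper silently repairs this by replacing $(i-a)^2(i^2+a^2)$ with $i^4+a^4$ in the next display.

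Two small slips to correct in your write-up: the inequality $\frac{i+a}{a^2}|\phi_i(a)|+\eta_i(a)\lesssim \frac{i+a}{a^2}\eta_i(a)(|i-a|+1)$ fails whenever $(i+a)/a^2<1$, so skip it and square the additive form directly; consequently the second term after squaring is $\eta_i(a)^2$, not $a^{-2}\eta_i(a)^2$. This does not affect the conclusion, since $\sum_i\eta_i(a)^2$ and $\sum_i\eta_i(a)^2 f_{0,i}^2$ are still controlled by Lemma~\ref{lem: bound sum} (with $r=2$, $l=4$) and by pulling out $h_n(a,f_0)$ via Lemma~\ref{lem: bound term}, respectively, with the required $a^{-3}\log(n/a)$ order.
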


\begin{proof}~Note that
\begin{align*}
d_n^2(a_1,a_2)=n^4\sum\limits_{i=1}^{\infty}(\phi_i(a_1)-\phi_i(a_2))^2V_0(Y_i^2),
\end{align*}
 with $\phi_i(a)=\frac{e^{i/a}(i-a)}{2a\log^2(n/a)(ae^{i/a}+n)^2}$. By elementary computations one can see that $|\phi_i(a)'|^2\lesssim (i^2 a^{-4}+a^{-2})\phi_i^2(a)$, hence in view of Lemma \ref{Lem: Diff},
\begin{align*}
d_n^2(a_1,a_2)\lesssim (a_1-a_2)^2n^4\sup\limits_{a\in[a_1,a_2]}\sum\limits_{i=1}^{\infty}\frac{e^{2i/a}(i^4+a^4)}{a^6\log^4(n/a)(ae^{i/a}+n)^4}V_0(Y_i^2).
\end{align*}
Since $V_0(Y_i^2) = 2/n^2+4f_{0,i}^2/n$ the preceding sum is bounded by 
\begin{align}
\sum\limits_{i=1}^{\infty}\frac{2e^{2i/a}(i^4+a^4)}{a^6n^2\log^4(n/a)(ae^{i/a}+n)^4}+\sum\limits_{i=1}^{\infty}\frac{4e^{2i/a}(i^4+a^4)}{a^6n\log^4(n/a)(ae^{i/a}+n)^4}f^2_{0,i}.\label{eq: help:Lem5}
\end{align}
Then in view of Lemma \ref{lem: bound term} (applied with $m=4$ and $m=0$) and Lemma \ref{lem: bound sum} (applied with $r=1$ and $l=2$) the first term of \eqref{eq: help:Lem5}
is bounded from above by a multiple of
\begin{align*}
\sum\limits_{i=1}^{\infty}&\frac{e^{i/a}}{a^3n^3(ae^{i/a}+n)^2}
\lesssim\frac{\log(n/a)}{a^3n^4}.
\end{align*}
Similarly in view of Lemma \ref{lem: bound term} (with $m=3$ and $m=-1$)  the second term of \eqref{eq: help:Lem5} is bounded by
\begin{align*}
\max\limits_{i\in\mathbb{N}}&\frac{\big((i/a)^3+(i/a)^{-1}\big)e^{i/a}}{a^2n^3\log^2(n/a)(ae^{i/a}+n)^2}
 h_n(a,f_0)\\
&\qquad\lesssim\Big(\frac{\log(n/a)}{a^3n^4}+\frac{1}{n^5 a^2}\Big)h_n(a,f_0)\lesssim \frac{\log (n/a)}{a^3n^4}h_n(a,f_0),
\end{align*}
concluding the proof of the lemma. 
\end{proof}

\section{Proof of Theorem \ref{thm: mod_eb}}\label{sec: mod_eb}
Similarly to the previous sections we use the notations introduced in Section \ref{sec: polish tail coverage}. We show below that there exists a constant $c>0$ depending only on $m,M$ and $\beta_0$ such that
\begin{align}
\inf_{\beta\geq \beta_0}\inf_{f_0\in \Theta_s^\beta(m,M)}P_0(\hat{a}_n\geq c (n/\log n)^{1/(1+2\beta)}/\log n )\rightarrow 1,\label{eq: lb_mod:mmle}
\end{align}
which combined with Proposition \ref{prop: bound a} and Theorem \ref{th: consistency a} results in
\begin{align*}
\inf_{\beta\geq \beta_0}\inf_{f_0\in \Theta_s^\beta(m,M)} P_0(c (n/\log n)^{1/(1+2\beta)}\leq \tilde{a}_n\leq C (n/\log n)^{1/(1+2\beta)})\rightarrow 1,
\end{align*}
for some positive constants $c,C$. Let us introduce then the notation 
$$\tilde{\mathcal{I}}_n=[c (n/\log n)^{\frac{1}{1+2\beta}},C (n/\log n)^{\frac{1}{1+2\beta}}].$$
 As before, note that $f_0\in \hat{C}_n(L)$ is equivalent to $\|f_0-\hat{f}\|_2\leq L r_{\alpha}(\tilde{a}_n)$, hence  
by proving that 
\begin{align*}
\inf_{a\in \tilde{\mathcal{I}}_n}r^2_{\alpha}(a)\geq C_1(n/\log n)^{-2\beta/(1+2\beta)},\\
\inf_{\beta\geq\beta_0}\inf_{f_0\in\Theta_s^\beta(m,M)}P_{0}\big(\inf_{a\in \tilde{\mathcal{I}}_n}\|W(a)\|_2^2\leq C_2(n/\log n)^{-2\beta/(1+2\beta)}\big)\to 1,\\
\sup_{\beta\geq\beta_0}\sup_{f_0\in\Theta_s^\beta(m,M)}\sup_{a\in \tilde{\mathcal{I}}_n}\|B(a,f_0)\|_2^2\leq C_3(n/\log n)^{-2\beta/(1+2\beta)},
\end{align*}
hold for some constants $C_1,C_2,C_3>0$, the statement of the theorem follows immediately. The proof of the first two inequalities follow from \eqref{eq: bound radius} and \eqref{eq: ub:variance:help} (with $\underline{a}_n$ and $\overline{a}_n$ replaced by  a multiple of $(n/\log n)^{1/(1+2\beta)}$), respectively. To prove the last inequality we note that for $f_0\in\Theta_s^\beta(m,M)$, $a\in \tilde{\mathcal{I}}_n$, and $\beta\geq\beta_0$ we have that 
\begin{align*}
\|B(a,f_0)\|^2_2&\lesssim \sum_{i=1}^{I_a/2}a^2e^{2i/a}n^{-2}i^{-1-2\beta}+\sum_{i=I_a/2}^{\infty}i^{-1-2\beta}\lesssim a/n + I_a^{-2\beta}\\
& = o\big( (n/\log n)^{-2\beta/(1+2\beta)}\big).
\end{align*}

It remained to prove assertion \eqref{eq: lb_mod:mmle}. Let us introduce the slightly modified version of $\underline{a}_n$ as
\begin{align*}
\underline{a}_n':=\sup\{a\in[1,A_n]: g_n(a,f_0)\geq B\},
\end{align*}
for some sufficiently large constant $B>0$ to be specified later. Then we show below that
\begin{align}
P_0(\hat{a}_n\geq \underline{a}_n')\rightarrow 1,\quad\text{and}\quad \underline{a}_n'\geq c (n/\log n)^{1/(1+2\beta)}/\log n,\label{eq: help:mod_eb}
\end{align}
for some sufficiently small constant $c>0$.

For the second statement note that
\begin{align}
g_n(a,f_0)\geq \frac{m}{\log^2(n/a)}n^2\sum_{i=I_a}^{\infty}  e^{-i/a}i^{-2\beta}
\gtrsim mn a^{-1-2\beta}\log^{-2-2\beta} (n/a),\label{eq: mod_eb:help2}
\end{align}
hence for any fixed $B>0$ there exists a small enough $c>0$ such that the right hand side of the preceding display with $a=c (n/\log n)^{1/(1+2\beta)}/\log n$ is bigger than $B$. It remained to deal with the first part of \eqref{eq: help:mod_eb}. We show below that with probability tending to one $\inf_{a\in [\underline{a}_n'/2,\underline{a}_n']}\mathbb{M}_n(a)\geq cB\log^2(n/a)$, for some small enough constant $c>0$, not depending on $B$. Then with probability tending to one for any $a\in[1,\underline{a}_n'/2]$ we have
\begin{align*}
\ell_n(\underline{a}_n)-\ell_n(a)&\geq \int_{a}^{\underline{a}_n'/2}\mathbb{M}_n(\tilde{a})d\tilde{a}+\int_{  [\underline{a}_n'/2,\underline{a}_n']}\mathbb{M}_n(\tilde{a})d\tilde{a}\\
&\geq -(\underline{a}_n'/2-a)C \log^2(n/\underline{a}_n)+c B(\underline{a}_n'/2)\log^2(n/\underline{a}_n')\\
&\geq (c/4) B\underline{a}_n'\log^2(n/\underline{a}_n'),
\end{align*}
for large enough choice of $B>0$, hence the global maximum of $\ell_n(a)$ lies outside of the interval $[1,\underline{a}_n']$.

 It remained to verify the lower bound for $M_n(a)$. First note that for $a\leq A_n=o(n)$
\begin{align*}
g_n(a,f_0)&\leq \frac{M}{\log^2(n/a)}\Big(\frac{1}{a}\sum_{i=2a}^{I_a}e^{i/a}i^{-2\beta}+\frac{n^2}{a^3}\sum_{i=I_a}^{\infty} e^{-i/a}i^{-2\beta}\Big)\\
&\leq c_{M,\beta} n a^{-1-2\beta}(\log n)^{-2-2\beta},
\end{align*}
hence $\underline{a}_n'\leq c_{M,\beta}' B^{-1/(1+2\beta)}(n/\log n)^{1/(1+2\beta)}/\log n$. Therefore in view of \eqref{eq: mod_eb:help2} for every $a\geq \underline{a}_n'/2$ we have $g_n(a,f_0)\geq c_{M,\beta,m}B$, for some positive constant $c_{M,\beta,m}>0$ not depending on $B$. Similarly we can show that $g_n(a,f_0)\leq c_{M,\beta,m}'B$, for every $a\geq \underline{a}_n'/2$, for some $c_{M,\beta,m}'>0$ not depending on $B$. Then following the same line of reasoning as in 
Section \ref{sec: mmle:lb}, with the only main difference that instead of the interval given in \eqref{def: interval:lb} we are working with the interval $[\underline{a}_n'/2,\underline{a}_n']$ we get that with porbability going to one
\begin{align*}
\inf\limits_{a\in [\underline{a}_n'/2,\underline{a}_n']}\mathbb{M}_n(a)&\geq 2^{-1}\inf\limits_{a\in [\underline{a}_n'/2,\underline{a}_n']}\Big\{\log^2(n/a)\Big(c_{M,\beta,m}B  -\sqrt{c_{M,\beta,m}' B}\Big)\\&\quad+\sum\limits_{i=1}^{a}\frac{n^2(i-a)e^{i/a}Y_i^2}{a(ae^{i/a}+n)^2}-\sum\limits_{i=1}^{\infty}\frac{n(i-a)}{a^2(ae^{i/a}+n)}\Big\}\\
&\gtrsim B\log^2(n/\underline{a}_n'),
\end{align*}
for large enough choice of $M>0$, finishing the proof of the theorem.

\section{Proofs for the Hierarchical Bayes procedure}\label{sec: hb}
In this section we prove the results on the hierarchical Bayes procedure (i.e. Theorems \ref{thm: hb:contraction} and \ref{thm: polish tail coverage} and Corollary \ref{cor: size}) based on the results derived for the empirical Bayes procedure. First we state that under the conditions of Theorem \ref{thm: hb:contraction} the hyper-posterior distribution on the hyper-parameter $a$ concentrates most of its mass on the interval $\mathcal{I}_n=[\underline{a}_n\log(n)/(1+\log n) ,C\overline{a}_n]$, for some large enough constant $C>0$. 
\begin{lemma}\label{lem: HB}~If $a\sim\pi(.)$ such that $\pi$ verifies Assumption \ref{assump: HB} then for sufficiently large $C>0$ we have for every $\beta_0>0$ that
\begin{align*}
\inf_{\beta>\beta_0}\inf_{f_0\in\Theta^{\beta}(M)}E_0\Pi\Big(\underline{a}_n\log(n)/(1+\log n) \leq a\leq C\overline{a}_n|Y\Big)= 1+o(1/n).
\end{align*}
\end{lemma}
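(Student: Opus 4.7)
The hierarchical posterior on the rescaling hyper-parameter has density (w.r.t.\ Lebesgue measure on $[1,A_n]$) proportional to $\pi(a)e^{\ell_n(a)}$, so
\begin{align*}
\Pi\bigl(a\notin\mathcal{I}_n \mid Y\bigr)=\frac{\int_{[1,A_n]\setminus\mathcal{I}_n}\pi(a)e^{\ell_n(a)}da}{\int_{1}^{A_n}\pi(a)e^{\ell_n(a)}da}=:\frac{N_n}{D_n},
\end{align*}
where $\mathcal{I}_n=[\underline{a}_n\log(n)/(1+\log n),\,C\overline{a}_n]$. The plan is to upper bound $N_n$ using the one-sided score estimates already developed in Sections \ref{sec: mmle:lb} and \ref{sec: consistency a}, and to lower bound $D_n$ by integrating over a tiny neighborhood of $\hat{a}_n\in[\underline{a}_n,\overline{a}_n]$.

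For the numerator, I split at $a_\ell:=\underline{a}_n\log(n)/(1+\log n)$ and $C\overline{a}_n$. On $[1,a_\ell]$ I invoke the argument of Section \ref{sec: mmle:lb}: on $[1,\underline{a}_n\log(n/\underline{a}_n)/(1+\log(n/\underline{a}_n))]$ the log-likelihood drop is at least $c_1\underline{a}_n\log^2(n/\underline{a}_n)$ by the derivation given there, and extending into the slightly larger interval up to $a_\ell$ by integrating the pointwise lower bound $\mathbb{M}_n(a)\geq \tilde c_0 B\log^3(n/\underline{a}_n)$ over a segment of length $\geq \underline{a}_n/(1+\log n)$ produces the same order. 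On $[C\overline{a}_n,A_n]$, integrating the upper bound $\mathbb{M}_n(a)<-2^{-6}\log^2(n/a)$ from Section \ref{sec: consistency a} gives $\ell_n(a)\leq\ell_n(\overline{a}_n)-c_2(C-1)\overline{a}_n\log^2(n/\overline{a}_n)$. Using $\int\pi\leq 1$, both pieces of $N_n$ are bounded by $e^{\ell_n(\underline{a}_n)\vee\ell_n(\overline{a}_n)}$ times $\exp\{-c\,\overline{a}_n\log^2 n\}$.

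For the denominator, on the event of Theorem \ref{th: consistency a} I integrate over the small neighborhood $J_n:=[\hat a_n,\hat a_n+\delta_n]\cap[1,A_n]$ with $\delta_n:=(\log n)^{-2}$. The uniform magnitude controls on $|\mathbb{M}_n(a)|$ underlying Sections \ref{sec: mmle:lb}–\ref{sec: consistency a} (viz.\ the deterministic bound $\mathbb{M}_n(a)\lesssim \log^2(n/a)$ combined with the concentration bound on $\mathbb{M}_n-E_0\mathbb{M}_n$) yield $|\ell_n(a)-\ell_n(\hat a_n)|\lesssim \log^2(n)\,\delta_n=O(1)$ uniformly on $J_n$. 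Assumption \ref{assump: HB} gives $\pi(a)\geq c_4^{-1}\overline{a}_n^{-c_3}e^{-c_2\overline{a}_n}$ on $J_n$. Hence
\begin{align*}
D_n\;\gtrsim\;\delta_n\,\overline{a}_n^{-c_3}\,e^{-c_2\overline{a}_n}\,e^{\ell_n(\hat a_n)}.
\end{align*}
Using $\ell_n(\hat a_n)\geq\ell_n(\underline{a}_n)\vee\ell_n(\overline{a}_n)$ the resulting ratio is bounded by a polynomial factor in $\overline{a}_n,\log n$ times $e^{c_2\overline{a}_n}e^{-c\,\overline{a}_n\log^2 n}$. Since $\overline{a}_n\geq K_0$ (otherwise one adjusts $C$ so that $\mathcal{I}_n\supseteq[1,C\overline{a}_n]$ and there is nothing to prove on the upper side), the $\log^2 n$ in the exponent absorbs $c_2\overline{a}_n$ and the bound is $o(n^{-K})$ for every $K$, in particular $o(1/n)$.

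The main obstacle is the denominator lower bound: one must control $\sup_{a\in J_n}|\mathbb{M}_n(a)|$ inside the interior interval $[\underline{a}_n,\overline{a}_n]$, which the one-sided analyses of Section \ref{sec: consistency a} do not directly cover, and one must verify that the ``good event'' on which the Section \ref{sec: consistency a} estimates hold has complementary probability $o(1/n)$ rather than merely $o(1)$. The first issue is handled by noting that the same variance bound (Lemma \ref{lem: bound var}) and entropy bound (Lemma \ref{lem: bound entropy}) hold over $[\underline{a}_n,A_n]$, so the uniform bound on $\mathbb{M}_n/\log^2(n/a)$ extends inward. The second is resolved by upgrading the Markov bounds of Section \ref{sec: consistency a} to exponential (Gaussian-chaos) concentration for the quadratic and Gaussian processes $\mathbb{V}$ and $\mathbb{W}$, which yields bad-event probability $o(n^{-K})$ for any $K$; on the bad event the trivial bound $\Pi(\cdot|Y)\leq 1$ is used.
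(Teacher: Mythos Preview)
Your strategy is more convoluted than necessary, and the step you flag as ``the main obstacle'' is a genuine gap. You claim that Lemma~\ref{lem: bound var} and Lemma~\ref{lem: bound entropy} extend to $[\underline{a}_n,A_n]$, but both proofs use $h_n(a,f_0)\leq b$ explicitly (look at the bound on the second sum in \eqref{eq:help:Lem4}, and at the factor $1+h_n(a,f_0)$ in Lemma~\ref{lem: bound entropy}). On the interior $[\underline{a}_n,\overline{a}_n]$ there is no control on $h_n(a,f_0)$; near $\underline{a}_n$ one has $g_n\approx B\log n$ and hence $h_n$ can be of order $\log n$, so $E_0[\mathbb{M}_n(a)]$ can be of order $\log^3(n/a)$, not $\log^2(n/a)$. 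Thus your claim $|\ell_n(a)-\ell_n(\hat a_n)|=O(1)$ on $J_n$ is not justified by the lemmas you cite. The fact that $\mathbb{M}_n(\hat a_n)=0$ at the (interior) maximizer does not help without a second-derivative bound, which is nowhere established.

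The paper avoids this entirely by never touching the interior and never invoking $\hat a_n$. For the upper tail it compares the numerator $\int_{C\overline{a}_n}^{A_n}$ against the \emph{deterministic} interval $[\overline{a}_n,2\overline{a}_n]$ in the denominator: since $\mathbb{M}_n<0$ on $[\overline{a}_n,A_n]$, one has $\ell_n(a)\geq \ell_n(2\overline{a}_n)$ for $a\in[\overline{a}_n,2\overline{a}_n]$ and $\ell_n(a)\leq \ell_n(2\overline{a}_n)-K_4\overline{a}_n\log^2(n/\overline{a}_n)$ for $a\geq C\overline{a}_n$, so the likelihood ratio cancels and one is left with $\Pi([C\overline{a}_n,\infty))e^{-K_4\overline{a}_n\log^2(n/\overline{a}_n)}/\Pi([\overline{a}_n,2\overline{a}_n])$, which Assumption~\ref{assump: HB} handles. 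For the lower tail the comparison interval is $[(\underline{a}_n+\underline{a}_n^*)/2,\underline{a}_n]$, lying inside the region where $\mathbb{M}_n\geq K_2\log^3(n/\underline{a}_n)$; integrating from $\underline{a}_n^*$ gives $\ell_n(a)\geq \ell_n(\underline{a}_n^*)+K_2(\underline{a}_n/4)\log^2(n/\underline{a}_n)$ on that interval, while on $[1,\underline{a}_n^*]$ the lower bound $\mathbb{M}_n\geq -K_3\log^2(n/a)$ yields only $\ell_n(a)\leq \ell_n(\underline{a}_n^*)+K_3\underline{a}_n\log^2(n/\underline{a}_n)$. Since $K_2$ can be made large via $B$ while $K_3$ is fixed, the ratio is again exponentially small. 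Both comparison intervals use only the one-sided score bounds already in hand; no new interior estimates are needed, and the MMLE never appears.
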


\subsection{Proof of Theorem \ref{thm: hb:contraction}}\label{sec: hb:contraction}
Take $\varepsilon_n=(n/\log^2n)^{-\beta/(1+2\beta)}$. Then following from Lemma \ref{lem: HB}, we have
\begin{align*}
\sup_{f_0\in\Theta^\beta(M)}&E_0\Pi\big(f:\, \|f-f_0\|_2>M_n\varepsilon_n |Y\big)\leq\sup_{f_0\in\Theta^\beta(M)}\Big(E_0\Pi(a\notin \mathcal{I}_n|Y)\\
 &\qquad\qquad+E_0\sup_{a\in \mathcal{I}_n}\Pi_a\big(f:\, \|f-f_0\|_2>M_n\varepsilon_n |Y\big)\Big)=o(1),
\end{align*}
where the last equation follows by similar arguments as given in \eqref{eq : contraction mark ineq} and the displays below it (the only difference is that the supremum is taken over the interval $\mathcal{I}_n$ instead of $[\underline{a}_n,\overline{a}_n]$, but it only changes the constant factors which do not play an essential role. This concludes the proof of the theorem.

\subsection{Proof of Theorem \ref{thm: polish tail coverage} - Hierarchical Bayes part}\label{sec: hb:Thm2.4}

Let us introduce the notations $W=\hat{f}-E_0\hat{f}$ and $B(f_0)=E_0\hat{f}-f_0$, for the centered hierarchical posterior mean and the bias of the posterior mean, respectively. Then $P_{0}(f_0\in \hat{C}(L\log n))$ if and only if
\begin{align}
\|W\|_2\leq L\log (n)r_{\alpha}-\|B(f_0)\|_2\label{eq: help:hb1}
\end{align}
holds. Using assertions \eqref{eq: bound radius}, \eqref{eq: bound variance}, and \eqref{eq: bound bias} we show below that, there exist constants $\tilde{C}_1,\tilde{C}_2,\tilde{C}_3>0$, such that 
\begin{align}\label{eq: bound radius:hb}
r^2_{\alpha}\geq \tilde{C}_1(\underline{a}_n/n)\log (n/\underline{a}_n),\\
\label{eq: bound variance:hb}
\inf\limits_{f_0\in\Theta_{pt}(L_0,N_0,\rho)}P_{0}\big(\|W\|^2_2\leq  \tilde{C}_2 (\underline{a}_n/n)\log(n/\underline{a}_n)\log^2 n\big)\to 1,\\
\label{eq: bound bias:hb}
\|B(f_0)\|^2_2\leq  \tilde{C}_3(\underline{a}_n/n)\log^2\big(n/\underline{a}_n)\log n,
\end{align}
resulting in \eqref{eq: help:hb1} for sufficiently large choice of $L>0$.

Proof of \eqref{eq: bound radius:hb}: Let us take any $\alpha'>\alpha$ and note that in view of \eqref{eq: bound radius} we have $$\inf_{a\in \mathcal{I}_n}r_{\alpha'}(a)^2\geq C_1 (\underline{a}_n/n)\log(n/\underline{a}_n).$$ Next, in view of Lemma \ref{lem: HB} and Anderson's lemma, we get for arbitrary $r\leq \inf_{a\in \mathcal{I}_n}r_{\alpha'}(a)$ that
\begin{align*}
\Pi(f:\, \|f-\hat{f}\|_2\leq r|Y)
&=\int_{\mathcal{I}_n}\Pi_a(f:\, \|f-\hat{f}\|_2\leq r|Y)\pi(a|Y)da+o(1)\\
&\leq \int_{\mathcal{I}_n}\Pi_a(f:\, \|f-\hat{f}_a\|_2\leq r_{\alpha'}(a)|Y)\pi(a|Y)da+o(1)\\
&\leq 1-\alpha'+o(1),
\end{align*}
hence $r_\alpha^2\geq\inf_{a\in \mathcal{I}_n}r_{\alpha'}(a)^2\geq C_1  (\underline{a}_n/n)\log(n/\underline{a}_n)$.

Proof of \eqref{eq: bound variance:hb}: Note that by triangle inequality, Fubini's theorem, assertion \eqref{eq: bound variance}, and Lemma \ref{lem: HB} we get that under the polished tail condition with $P_0$-probability tending to one
\begin{align*}
\|W\|_2&=\Big\|\int (\hat{f}_a-E_0\hat{f}_a)\pi(a|Y)da \Big\|_2\\
& \leq\sup_{a\in \mathcal{I}_n}\|W(a)\|_2\pi(\mathcal{I}_n|Y)+\sup_{1\leq a\leq A_n}\|W(a)\|_2\pi(\mathcal{I}_n^c|Y)\\
&\leq  (C_2\underline{a}_n/n)^{1/2}\log(n/\underline{a}_n)^{1/2}\log n+o(1/n)
\end{align*}
where $\pi(\mathcal{I}_n|Y)$ denotes (by slightly abusing our notation) the posterior probability that the hyper-parameter $a$ lies in the interval $\mathcal{I}_n$ and in the last inequality we used  in view of the proof of assertion \eqref{eq: bound variance} that $\sup_{1\leq a\leq A_n}\|W(a)\|_2=O(1)$.

Proof of \eqref{eq: bound bias:hb}: Similarly to the proof of \eqref{eq: bound variance:hb} we get that
\begin{align*}
\|B(f_0)\|_2^2&\lesssim \sup_{a\in \mathcal{I}_n}\|B(a,f_0)\|_2^2+o(\sup_{a\in [1,A_n]}\|B(a,f_0)\|_2^2 /n)\\
&\leq C_3(\underline{a}_n/n)\log^2\big(n/\underline{a}_n)\log n+o(1/n),
\end{align*}
where the last inequality follows from $\|B(a,f_0)\|_2^2\leq \|f_0\|_2^2=O(1)$, finishing the proof of the theorem.

\subsection{Proof of Corollary \ref{cor: size}}
Let $\varepsilon_n=(n/\log^2 n)^{-\beta/(1+2\beta)}$ and first note that in view of assertions \eqref{eq: bound variance:hb} and \eqref{eq: bound bias:hb} combined with triangle inequality and Proposition \ref{prop: bound a} we have with $P_0$-probability tending to one that
\begin{align*}
\|f_0-\hat{f}\|_2\leq \|W\|_2+\|B(f_0)\|_2\lesssim \sqrt{\underline{a}_n/n}\log(n/\underline{a}_n)\lesssim \varepsilon_n.
\end{align*}
Then in view of Theorem \ref{thm: hb:contraction} and by applying again the triangle inequality we get with probability tending to one that
\begin{align*}
\Pi(f:\, \|f-\hat{f}\|_2\leq M_n\varepsilon_n|Y)\geq \Pi(f:\, \|f-f_0\|_2\leq M_n\varepsilon_n-\|f_0-\hat{f}\|_2|Y)=1-o(1),
\end{align*}
concluding the proof of the corollary.

\subsection{Proof of Lemma \ref{lem: HB}}
In Section \ref{sec: consistency a} it was shown that $\mathbb{M}_n(a)=\frac{\partial \ell_n(a)}{\partial a}$ satisfies, for positive constants $K_1$, $K_2$ and $K_3$,
\begin{align*}
\frac{\mathbb{M}_n(a)}{\log^2(n/a)} \left\{
\begin{array}{ll}
	\leq -K_1, & \mbox{for } a\geq\overline{a}_n\\
	\geq K_2\log (n/\underline{a}_n), & \mbox{for } a\in[\underline{a}_n^*,\underline{a}_n]\\
	\geq -K_3, & \mbox{for } a\leq\underline{a}_n^*,
\end{array}
\right.
\end{align*}
where $\underline{a}_n^*=\underline{a}_n\log n/(1+\log n)$.
Furthermore, the constant $K_2$ can be chosen arbitrarily large by choosing $B$ large enough, while the constant $K_3$ is fixed.

For $a\geq C\overline{a}_n$ with $C\geq 3$, we have
$$\ell_n(a)-\ell_n(2\overline{a}_n)\leq -K_1\log^2 (n/\overline{a}_n)(a-2\overline{a}_n)\leq -K_4\log^2 (n/\overline{a}_n)\overline{a}_n$$
with $K_4=K_1(C-2)$. Consequently $e^{\ell_n(a)}\leq e^{\ell_n(2\overline{a}_n)-K_4\log^2 (n/\overline{a}_n)\overline{a}_n}$ for $a\geq C\overline{a}_n$. Since also $e^{\ell_n(a)}\geq e^{\ell_n(2\overline{a}_n)}$ for $a\in[\overline{a}_n,2\overline{a}_n]$, we find
\begin{align}
\Pi(a\geq C\overline{a}_n|Y)\leq\frac{\int_{C\overline{a}_n}^{\infty}e^{\ell_n(a)}\pi(a)da}{\int_{\overline{a}_n}^{2\overline{a}_n}e^{\ell_n(a)}\pi(a)da}\leq\frac{\Pi\big([C\overline{a}_n,\infty)\big)e^{-K_4\log^2 (n/\overline{a}_n)\overline{a}_n}}{\Pi([\overline{a}_n,2\overline{a}_n])}.\label{eq: hb:ub}
\end{align}
Note that by Assumption \ref{assump: HB}
\begin{align*}
\Pi\big([\overline{a}_n,2\overline{a}_n]\big)\gtrsim \overline{a}_n^{1-c_3} e^{-c_2\overline{a}_n}\gg e^{-K_4 \log^2(n/\overline{a}_n)\overline{a}_n},
\end{align*}
hence the right hand side of \eqref{eq: hb:ub} tends to zero.

The analysis of the left tail goes similarly. Note that for $a<\underline{a}_n^*/2$ we have
$\ell_n(\underline{a}_n^*)-\ell_n(a)\geq -K_3 (\underline{a}_n^*-a)\log^2(n/\underline{a}_n),$ hence $e^{\ell_n(a)}\leq e^{\ell_n(\underline{a}_n^*)+K_3\underline{a}_n\log^2 (n/\overline{a}_n)}$ and analogously for $(\underline{a}_n+\underline{a}_n^*)/2<a<\underline{a}_n$ we have $\ell_n(a)-\ell_n(\underline{a}_n^*)\geq K_2 (a-\underline{a}_n^*)\log^3(n/\underline{a}_n)$, which implies $e^{\ell_n(a)}\geq e^{\ell_n(\underline{a}_n^*)+K_2(\underline{a}_n/4)\log^2(n/\underline{a}_n)}$. Therefore
\begin{align}
\Pi(a\leq \underline{a}_n^*|Y)\leq\frac{\int_1^{\underline{a}_n^*}e^{\ell_n(a)}\pi(a)da}{\int_{(\underline{a}_n+\underline{a}_n^*)/2}^{\underline{a}_n}e^{\ell_n(a)}\pi(a)da} \leq \frac{\Pi([1,\underline{a}_n])e^{K_3\underline{a}_n\log^2(n/\underline{a}_n)}}{\Pi([(\underline{a}_n+\underline{a}_n^*)/2,\underline{a}_n])e^{K_2(\underline{a}_n/4)\log^2(n/\underline{a}_n)}}.\label{eq: help:hb:lemma}
\end{align}
Since 
$$\Pi([(\underline{a}_n+\underline{a}_n^*)/2,\underline{a}_n])^{-1} \lesssim \log (n)\underline{a}_n^{c_5-1} e^{c_6\underline{a}_n}\ll e^{K_2(\underline{a}_n/8) \log^2(n/\underline{a}_n)},$$
for large enough choice of $K_2$, the right hand side of \eqref{eq: help:hb:lemma} tends to zero, finishing the proof of the lemma.

\section{Technical Lemmas}

\begin{lemma}\label{lem: bound term}~Let $i,m \in \mathbb{N}$ and $a\geq 1$, then for any $n/a\geq e^m$
\begin{align*}
\frac{ne^{i/a}i^m}{a^m(ae^{i/a}+n)^2}\leq\frac{1}{a}\log^m\big(\frac{n}{a}\big)\vee e\frac{a^{-m}}{n}.
\end{align*}
\end{lemma}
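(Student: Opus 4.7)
The plan is to split at the natural threshold $I_a = a\log(n/a)$, which is precisely the integer at which $ae^{i/a} = n$, i.e.\ where the two summands in the denominator $ae^{i/a}+n$ cross over. On each side of this threshold one of the two summands dominates, so the denominator can be replaced by a one-term lower bound, after which the resulting expression becomes monotone in $i$ and is easy to maximise.

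More concretely, I would argue as follows. For $i\le I_a$ one has $ae^{i/a}\le n$, hence $(ae^{i/a}+n)^2\ge n^2$, and therefore
\[
\frac{ne^{i/a}i^m}{a^m(ae^{i/a}+n)^2}\ \le\ \frac{i^m e^{i/a}}{a^m n}.
\]
The map $i\mapsto i^m e^{i/a}$ is nondecreasing on $[0,\infty)$ (its derivative equals $i^{m-1}e^{i/a}(m+i/a)\ge 0$), so its supremum on $[0,I_a]$ is attained at $i=I_a$, giving $(n/a)\,(a\log(n/a))^m = na^{m-1}\log^m(n/a)$. Plugging in yields the bound $\frac{1}{a}\log^m(n/a)$. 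For $i\ge I_a$ one has instead $ae^{i/a}\ge n$, hence $(ae^{i/a}+n)^2\ge a^2e^{2i/a}$, so
\[
\frac{ne^{i/a}i^m}{a^m(ae^{i/a}+n)^2}\ \le\ \frac{n\,i^m e^{-i/a}}{a^{m+2}}.
\]
Here I would use the assumption $n/a\ge e^m$ exactly once: it yields $I_a\ge ma$, so on $[I_a,\infty)$ the function $i\mapsto i^m e^{-i/a}$ lies on the decreasing branch of its unimodal profile (maximum at $i=ma$) and is therefore maximised at the left endpoint $i=I_a$. Evaluating gives $(a\log(n/a))^m\cdot(a/n)$, and plugging back yields again $\frac{1}{a}\log^m(n/a)$.

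Combining the two cases shows $f(i)\le\frac{1}{a}\log^m(n/a)$ for every $i\in\mathbb{N}$, which is always dominated by $\frac{1}{a}\log^m(n/a)\vee e\,a^{-m}/n$. The only subtle point, and the sole place the standing hypothesis enters, is verifying that $I_a\ge ma$ so that the second case sits on the decreasing side of $i^m e^{-i/a}$; everything else is manipulation with monotone functions and the elementary inequalities $ae^{i/a}+n\ge n$ and $ae^{i/a}+n\ge ae^{i/a}$. I do not foresee any real obstacle beyond bookkeeping the endpoint values carefully.
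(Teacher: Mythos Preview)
Your proof is correct and follows essentially the same approach as the paper's: split at $I_a=a\log(n/a)$, replace the denominator by $n$ on the left and by $ae^{i/a}$ on the right, and maximise the resulting monotone expressions using $I_a\ge ma$. The one cosmetic difference is that the paper states the full unimodal profile of $x\mapsto e^{x/a}(x/a)^m$ (critical point at $x=-ma$) and therefore records the bound $\frac{1}{a}\log^m(n/a)\vee e\,a^{-m}/n$ in the first case to allow for a maximum at $i=1$; this extra term only matters for negative $m$ (which the paper in fact uses elsewhere), so under the stated hypothesis $m\in\mathbb{N}$ your cleaner bound $\frac{1}{a}\log^m(n/a)$ already suffices.
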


\begin{proof}~Assume first that $i\leq I_a\equiv a\log(n/a)$. Note that the function $f(x)=e^{x/a}(x/a)^m$ is monotone decreasing on $(-\infty,-ma]$ and monotone increasing on $[-ma,\infty]$. Then by the inequality $ae^{i/a}+n\geq n$,
\begin{align*}
\frac{ne^{i/a}i^m}{a^m(ae^{i/a}+n)^2}\leq\frac{e^{i/a}(i/a)^m}{ n} \leq\frac{1}{a}\log^m\big(\frac{n}{a}\big)\vee e\frac{a^{-m}}{n}.
\end{align*}
Next assume that $i>I_a$. Note that the derivative of the function $f(x)=e^{-x/a}x^m$ is $f'(x)=e^{-x/a}x^{m-1}(m-x/a)$, hence the function $f(i)$ is monotone decreasing for $i\geq am$. Thus for $n/a\geq e^{m}$, $f(i)$ takes its maximum at $i=I_a$, which implies that
\begin{align*}
\frac{ne^{i/a}i^m}{a^m(ae^{i/a}+n)^2}\leq \frac{ne^{-i/a}i^m}{a^{m+2}}  \leq\frac{1}{a}\log^m\big(\frac{n}{a}\big).
\end{align*}
\end{proof}

\begin{lemma}\label{lem: bound sum}~Let $l>r\geq 0$, then for $n/a\geq e^{l-r}$
\begin{align*}
\sum\limits_{i=1}^{\infty}\frac{e^{ir/a}}{(ae^{i/a}+n)^l}\lesssim\frac{n^{r-l}}{a^{r-1}}\log\big(\frac{n}{a}\big).
\end{align*}
\end{lemma}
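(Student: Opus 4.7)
The plan is to split the sum at the threshold $I_a := a\log(n/a)$, which is precisely the location where $ae^{i/a}$ crosses $n$, so that one summand in the denominator dominates on each side.

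First I would treat the head $1\leq i\leq I_a$, where $ae^{i/a}\leq n$ yields $(ae^{i/a}+n)^l\geq n^l$ and hence a head contribution bounded by $n^{-l}\sum_{i=1}^{I_a}e^{ir/a}$. If $r=0$ this sum is exactly $I_a/n^l = a\log(n/a)/n^l$, matching the target on the nose. If $r>0$ the geometric progression evaluates to at most $\frac{e^{(I_a+1)r/a}}{e^{r/a}-1}$; using $e^{r/a}-1\geq r/a$ and $e^{I_a r/a}=(n/a)^r$ converts the head contribution into a constant (depending on $r$) times $a^{1-r}n^{r-l}$. Since the hypothesis $n/a\geq e^{l-r}$ forces $\log(n/a)\geq l-r>0$, this is dominated by the target $n^{r-l}/a^{r-1}\log(n/a)$ with implied constant depending on $r,l$ (noting also $e^{r/a}\leq e^r$ because $a\geq 1$).

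Second I would treat the tail $i>I_a$, where $ae^{i/a}\geq n$ yields $(ae^{i/a}+n)^l\geq a^l e^{il/a}$ and hence a decreasing geometric tail $a^{-l}\sum_{i>I_a}e^{-i(l-r)/a}$. The first term equals $(a/n)^{l-r}$, and since $1-e^{-(l-r)/a}\gtrsim \min((l-r)/a,\,1)$, the whole series sums to at most a constant times $(a/(l-r))(a/n)^{l-r}$. This produces a bound of order $n^{r-l}/a^{r-1}$, which is smaller than the target by a $\log(n/a)$ factor and so is absorbed. The proof is essentially bookkeeping; the only mild subtlety is the bifurcation of the head calculation into the $r=0$ and $r>0$ regimes, so I do not anticipate a real obstacle.
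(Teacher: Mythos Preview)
Your proposal is correct and essentially identical to the paper's proof: both split at $I_a=a\log(n/a)$, use $ae^{i/a}+n\geq n$ on the head and $ae^{i/a}+n\geq ae^{i/a}$ on the tail, and sum the resulting geometric series via $e^{(l-r)/a}-1\geq (l-r)/a$ together with $\log(n/a)\geq l-r$. Your explicit bifurcation into $r=0$ and $r>0$ for the head is a minor cosmetic refinement that the paper simply absorbs into the $\lesssim$ symbol.
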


\begin{proof}~First note that following from the inequality $ae^{i/a}+n\geq n$ and the sum of geometric series we get
\begin{align*}
\sum\limits_{i=1}^{I_a}\frac{e^{ir/a}}{(ae^{i/a}+n)^l}\leq n^{-1}\sum\limits_{i=1}^{I_a}e^{ir/a}\lesssim  \frac{n^{r-l}}{a^{r-1}}\log\big(\frac{n}{a}\big),
\end{align*}
where  $I_a\equiv a\log(n/a)$. Then similarly, using the inequality $ae^{i/a}+n\geq ae^{i/a}$ and  the sum of geometric series,
\begin{align*}
\sum\limits_{I_a}^\infty\frac{e^{ir/a}}{(ae^{i/a}+n)^l}\leq a^{-l}\sum\limits_{I_a}^\infty e^{(r-l)i/a}\leq \frac{n^{r-l}}{a^r}\frac{1}{e^{(l-r)/a}-1}\lesssim\frac{n^{r-l}}{a^{r-1}}\log\big(\frac{n}{a}\big)
\end{align*}
because $e^{(l-r)/a}-1\geq \frac{l-r}{a}$ and $\log\big(\frac{n}{a}\big)\geq l-r$ for $\frac{n}{a}\geq e^{l-r}$.
\end{proof}

\begin{lemma}[Lemma C.11 of \cite{vanderpas2017}]\label{Lem: Diff}
For any stochastic process $(V_{a}: a>0)$ with continuously differentiable sample paths $a\mapsto V_a$, 
with derivative written as $\dot V_a$,
\begin{align*}
E( V_{a_2}-V_{a_1})^2\leq (a_2-a_1)^2\sup_{a\in[a_1,a_2]}E \dot V_{a}^2.
\end{align*}
\end{lemma}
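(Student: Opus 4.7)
The plan is to use the fundamental theorem of calculus to rewrite the increment as an integral of the derivative, then apply Cauchy–Schwarz (or equivalently Jensen) and Fubini to bring the expectation inside.

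First, since $a\mapsto V_a$ is continuously differentiable along (almost every) sample path, the fundamental theorem of calculus gives $V_{a_2}-V_{a_1}=\int_{a_1}^{a_2}\dot V_a\,da$ pathwise. Next I would square both sides and bound the squared integral by Cauchy–Schwarz:
\begin{equation*}
(V_{a_2}-V_{a_1})^2=\Bigl(\int_{a_1}^{a_2}\dot V_a\,da\Bigr)^{\!2}\leq (a_2-a_1)\int_{a_1}^{a_2}\dot V_a^{\,2}\,da.
\end{equation*}

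Now I would take expectations on both sides and swap expectation and integration via Tonelli's theorem (the integrand $\dot V_a^{\,2}$ is nonnegative, so no integrability assumption is needed beyond the one implicit in the statement — if $\sup_{a}E\dot V_a^{\,2}=\infty$ the inequality is trivial), obtaining
\begin{equation*}
E(V_{a_2}-V_{a_1})^2\leq (a_2-a_1)\int_{a_1}^{a_2}E\dot V_a^{\,2}\,da\leq (a_2-a_1)^2\sup_{a\in[a_1,a_2]}E\dot V_a^{\,2},
\end{equation*}
which is the claim.

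There is no real obstacle here; the only thing to be mindful of is the measurability/integrability justification for Fubini. If one wants to avoid any pathwise smoothness hypothesis beyond what is stated, one can alternatively start from the mean-value identity $V_{a_2}-V_{a_1}=(a_2-a_1)\dot V_{\xi}$ for some random $\xi\in[a_1,a_2]$, square and bound $\dot V_\xi^{\,2}\leq \sup_{a\in[a_1,a_2]}\dot V_a^{\,2}$, but then one needs to pull the supremum out of the expectation, which is more delicate; the Cauchy–Schwarz route above avoids this issue entirely.
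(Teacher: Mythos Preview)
Your proof is correct. The paper does not actually prove this lemma but merely cites it from \cite{vanderpas2017}; your argument via the fundamental theorem of calculus, Cauchy--Schwarz, and Tonelli is the standard proof and is essentially what one finds in the cited reference.
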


\bibliographystyle{acm}
\bibliography{bibi}

\end{document}